\smallskip\documentclass[12pt]{article}
\usepackage[margin=1.25in]{geometry}
\usepackage{sectsty}
\usepackage{xcolor}
\usepackage{amssymb,amsmath}
\usepackage{amsthm}
\usepackage{hyperref}
\usepackage{mathrsfs}
\usepackage{textcomp}
\usepackage{enumitem}

\usepackage[abbrev,nobysame,non-compressed-cites]{amsrefs}
\usepackage{todonotes}

\hypersetup{
colorlinks,%
citecolor=black,%
filecolor=black,%
linkcolor=black,%
urlcolor=black
}

\usepackage{verbatim}
\numberwithin{equation}{section}

\makeatletter

\newdimen\bibspace
\makeatother

\makeatletter

\newtheorem{thm}{Theorem}[section]
\newtheorem{lem}[thm]{Lemma}

\newtheorem{defn}[thm]{Definition}

\newtheorem{cor}[thm]{Corollary}
\newtheorem{rem}[thm]{Remark}

\def\XXint#1#2#3{{\setbox0=\hbox{$#1{#2#3}{\int}$}
\vcenter{\hbox{$#2#3$}}\kern-.5\wd0}}

\def\P{\mathscr{P}}
\def\C{\mathscr{C}}

\newcommand{\al}{\alpha}                
\newcommand{\lda}{\lambda}
 
\newcommand{\om}{\Omega}                
\newcommand{\pa}{\partial}

\newcommand{\be}{\begin{equation}}      
\newcommand{\ee}{\end{equation}}

\newcommand{\ol}{\overline}

\newcommand{\R}{\mathbb{R}}

\newcommand{\wt}{\widetilde}

\allowdisplaybreaks

\begin{document}

\title{\textbf{Optimal regularity and fine asymptotics for very fast diffusion equations in bounded domains}
\bigskip}

\author{Tianling Jin\footnote{T. Jin was partially supported by NSFC grant 12122120, and Hong Kong RGC grants GRF 16304125, GRF 16303624 and GRF 16303822.}, \quad Xushan Tu, \quad
Jingang Xiong\footnote{J. Xiong was partially supported by NSFC grants 12325104.}, \quad Zhen Zheng}

\date{\today}

\maketitle

\begin{abstract}
We prove the optimal global regularity of admissible solutions to a
transformed very fast diffusion equation in the range $-1<p<0$, posed
on smooth bounded domains with zero Dirichlet boundary data and initial
data comparable to the distance function. More precisely, we establish
existence and uniqueness and show that solutions belong to
$C^{1,p+1}(\ol\om)$ in space for every positive time and are
$C^\infty$ in time uniformly up to the boundary. Moreover, all their
time derivatives belong to $C^{1,p+1}(\ol\om)$, and the exponent
$p+1$ is optimal. These regularity estimates further yield fine
long-time asymptotics toward the friendly giant solution, including a
first-order expansion in the $C^{1,p+1}(\ol\om)$ topology and an
improved convergence rate for the relative error in
$C^{p+1}(\ol\om)$.

\medskip
\noindent{\it Keywords}:   Optimal regularity, fine asymptotics, very fast diffusion equation.

\medskip

\noindent {\it MSC (2010)}: Primary 35B65; Secondary 35K20, 35K65.

\end{abstract}

\section{Introduction}
Let $\Omega\subset\mathbb{R}^{n}$, $n\geq1$, be a bounded domain with
smooth boundary. In this paper, we study the following
Cauchy--Dirichlet problem:
\begin{equation}\label{eq:equation for v}
\begin{cases}
\partial_t(v^p)-p\Delta v=0
&\quad \text{in }\Omega\times(0,T),\\
v=0
&\quad \text{on }\partial\Omega\times(0,T),\\
v(\cdot,0)=v_0
&\quad \text{on }\overline{\Omega},
\end{cases}
\end{equation}
where $-1<p<0$, $T>0$ and $v_0$ is nonnegative and nontrivial. Since $p<0$, the quantity
$v^p$ becomes singular as $v$ approaches zero. It is therefore
important to specify both the boundary behavior of $v$ and the
integrability of $v^p$. We work with the following class of solutions.

\begin{defn}\label{def:admissible-classical-solution}
Let $v_0\in C(\overline{\Omega})$ be positive in $\Omega$ and satisfy
\[
v_0=0\quad\text{on }\partial\Omega,
\qquad
v_0^p\in L^1(\Omega).
\]
A nonnegative function $v$ is called an admissible solution of
\eqref{eq:equation for v} with initial data $v_0$ if
\begin{enumerate}
\item[(i)] $v\in C_{x,t}^{2,1}(\Omega\times(0,T))
\cap C(\overline{\Omega}\times[0,T))$ and
$v>0$ in $\Omega\times(0,T)$;
\item[(ii)] $v^p\in C([0,T);L^1(\Omega))$;
\item[(iii)] $v$ satisfies \eqref{eq:equation for v} in the classical sense.
\end{enumerate}
\end{defn}

Notice that the condition
$v^p\in C([0,T);L^1(\Omega))$ is not automatic: because $p<0$, the
function $v^p$ blows up at the lateral boundary.

Throughout the paper, we write
$
d(x):=\operatorname{dist}(x,\pa\om).
$
We first state our main results. We prescribe the
natural linear vanishing (or non-degenerate condition) of the initial data at the boundary: assume
$v_0\in C(\ol\om)$ is positive in $\om$ and satisfies
\be\label{eq:inequality for v0}
\frac{1}{c_0}
\leq \inf_\om\frac{v_0}{d}
\leq \sup_\om\frac{v_0}{d}
\leq c_0
\ee
for some constant $c_0\geq1$. Notice
that, since $p>-1$, it implies $v_0^p\in L^1(\om)$. We establish the global well-posedness and the optimal boundary regularity of admissible solutions with such initial data. Furthermore, the solution will preserve the non-degenerate condition.

\begin{thm}\label{thm:main thm 1}
Let $\om$ be a bounded smooth domain, let $p\in(-1,0)$, and let
$v_0\in C(\ol\om)$ be positive in $\om$ and satisfy
\eqref{eq:inequality for v0} for some constant $c_0\geq1$. Then
\eqref{eq:equation for v} admits a unique global admissible solution
$v$ with $T=+\infty$. Moreover,
\[
v(x,\cdot)\in C^\infty\bigl((0,\infty)\bigr)\qquad \text{uniformly~for~} x\in \ol\om
\]
and
\be\label{eq:main thm 1 regularity in x}
\pa_t^k v(\cdot,t)\in C^{1,p+1}(\ol\om)
\quad\text{for every }t>0
\text{ and }k\in\mathbb N\cup\{0\}.
\ee
The exponent $p+1$ in
\eqref{eq:main thm 1 regularity in x} is optimal. Finally, for every $s>0$, there exist constants
$0<c_s\leq C_s<+\infty$, depending only on
$p,s,c_0,C_0,n$, and $\|\Omega\|_{C^2}$, such that
\be\label{eq:comparable to distance function0}
c_sd(x)\leq v(x,t)\leq C_sd(x),
\qquad (x,t)\in\Omega\times[0,s].
\ee
\end{thm}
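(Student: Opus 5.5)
We follow the standard route for singular/degenerate diffusion on bounded domains: barriers for the two-sided bound, approximation for existence, and regularity via a rescaled weighted equation. Set $u=v^p$, or rather keep $v$ and write the equation in nondivergence form
\[
\partial_t v=\frac{1}{v^{p-1}}\Delta v = v^{1-p}\Delta v ,
\]
which is degenerate at $\partial\Omega$ since $1-p>1$ and $v\sim d$. The basic comparison tool is the friendly giant / separable solution $S(x,t)=(c(T_*-t))^{\cdots}\varphi(x)$ or $t^{\gamma}\varphi$. Here $\varphi$ solves the elliptic problem $-\Delta\varphi=\lambda\varphi^{p}$ with $\varphi\sim d$, so that $\varphi\in C^{1,p+1}(\ol\om)$ exactly. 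The stationary profile thus already exhibits the $C^{1,p+1}$ threshold.

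\textbf{Step 1: Existence, uniqueness and the bound \eqref{eq:comparable to distance function0}.} We first build sub- and supersolutions of the form $\underline v=a(t)\varphi$ and $\overline v=b(t)\varphi$, with ODEs for $a,b$ chosen so that $a(0)\varphi\le v_0\le b(0)\varphi$. This uses \eqref{eq:inequality for v0} and $\varphi\sim d$. Separable ansatz gives $p a^{p-1}a'=-p\lambda a$, i.e. explicit exponential-type profiles that stay positive on $[0,s]$. Existence follows by solving regularized problems, for instance with boundary data $\varepsilon$ or on $\Omega_\varepsilon$, which are uniformly parabolic. We then trap them between the barriers, using interior parabolic estimates and Arzel\`a--Ascoli, and pass $\varepsilon\to0$. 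The continuity $v^p\in C([0,T);L^1)$ follows from dominated convergence, since $v^p\le C d^{p}\in L^1$ because $p>-1$. Uniqueness is an $L^1$-contraction for $v^p$: test the difference of the equations for two admissible solutions against $\mathrm{sgn}^+(v_1-v_2)$, cut off near the boundary, and use that boundary terms vanish thanks to (ii) and continuity up to $\partial\Omega$ (Kato's inequality).

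\textbf{Step 2: Regularity.} Near a boundary point we flatten $\partial\Omega$ and write $v=x_n w$, with $w$ bounded between positive constants by Step 1. The equation for $w$ becomes
\[
x_n^{-p}\partial_t w \,w^{p-1}\cdot p = \text{(elliptic operator in } w \text{ with weight } x_n^{2}\text{-type structure)},
\]
that is, a linear-in-$w$ degenerate equation of the form $x_n^{1-p}\,\partial_t w \approx x_n\Delta w+2\partial_n w$ (divide through). We first prove $C^\infty$ in $t$: time-difference quotients $(v(t+h)-v(t))/h$ satisfy the linearized equation, and comparison with $\varphi$-multiples gives $|\partial_t v|\le C d$. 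Iterating (bootstrapping $\partial_t^k v$) gives $|\partial_t^k v|\le C_k d$ uniformly. Then, for fixed $t$, we view the equation as the elliptic problem $\Delta v = f$ with $f=v^{p-1}\partial_t v$ (up to constants), so $|f|\le C d^{p}$. Weighted elliptic boundary Schauder theory for a right-hand side bounded by $d^{p}$ with $p\in(-1,0)$ gives $v\in C^{1,p+1}(\ol\om)$. The same holds for $\partial_t^k v$, whose equations have right-hand sides also $O(d^{p})$, using the bounds from the previous derivatives. Optimality follows from the stationary/separable example: for $\varphi$, $\Delta\varphi=-\lambda\varphi^p\approx -c\,d^p$, so $\partial_n\varphi$ behaves like $c+c' d^{p+1}$ in one dimension. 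Hence $\varphi\notin C^{1,\alpha}$ for $\alpha>p+1$. For general data, the long-time asymptotics toward $S$ (or a direct one-dimensional computation) shows that the $d^{p+1}$ term is generically nonzero.

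\textbf{Main obstacle.} The hardest step is the uniform-in-$x$ bound $|\partial_t^k v|\le C_k d$ up to the boundary, which is what feeds the elliptic step. Difference quotients in time satisfy a linear equation whose coefficient $v^{p-1}$ is singular, so standard comparison is delicate. We would handle this by scaling. On boxes of size $r\sim d(x_0)$ and time scale $r^{1-p}$ (the natural scaling, since $\partial_t v\sim v^{1-p}\Delta v\sim d^{-p}$ relative size), the rescaled $w$ solves a uniformly parabolic equation with bounded coefficients. We would then combine interior estimates with smoothing in time obtained from the homogeneity $v\mapsto\mu v(x,\mu^{-p}\cdot)$ type rescaling, i.e. Aronson--B\'enilan style estimates $|\partial_t v|\le C v/t$. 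Time regularity then transfers to all orders.
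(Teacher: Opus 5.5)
Your existence construction (separable barriers, $\varepsilon$-regularization, interior compactness), the final elliptic step ($-\Delta\pa_t^k v=O(d^{p})$ plus a Green-function estimate) and the one-dimensional optimality example all agree with the paper. The genuine gap is the step everything hinges on: the two-sided bound $|\pa_t^k v|\le C_k d$ up to $\pa\om$. None of the tools you propose produces it.

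\textbf{Barrier comparison.} The time difference quotient $z^h$ solves $z_t-v^{1-p}\Delta z=c^h z$ with $c^h\approx(1-p)v_t/v$. Since $-v^{1-p}\Delta(M\varphi)=M\lambda v^{1-p}\varphi^{p}\sim Md$, the function $M\varphi$ is a supersolution only if $c^h$ is bounded above, i.e.\ only if you already know $v_t\le Cv$. You would also need $|z^h|\le Md$ at some initial time, which is unavailable for merely continuous $v_0$.

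\textbf{Scaling.} The invariance is $\mu v(x,\mu^{1-p}t)$, not $\mu v(x,\mu^{-p}t)$. Ordering this family by comparison gives only the one-sided bound $v_t\ge -v/((1-p)t)$, which is sharp for the friendly giant. Consistently, $w=v_t/v$ satisfies $w_t-v^{1-p}\Delta w-2v^{-p}\nabla v\cdot\nabla w=(1-p)w^2$. The quadratic term works against any upper bound, and bounding $w$ from above needs control of its boundary values $\pa_\nu v_t/\pa_\nu v$, which is the regularity in question.

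\textbf{Whitney-box rescaling.} This cannot see the boundary. At distance $r$ the diffusivity is $\sim r^{1-p}$, so the intrinsic time scale is $r^{1+p}$, not $r^{1-p}$; this is why the paper's cylinders are $B_R\times(-R^{p+1},0]$. Interior estimates then give only $|v_t|\lesssim d^{-p}\gg d$, i.e.\ an unbounded potential $v_t/v=O(d^{-1-p})$.

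\textbf{What is missing.} You need a genuine boundary theory for $\pa_t-x_n^{1-p}a^{ij}D_{ij}$. The paper's Hopf lemma and oscillation-decay iteration for $v/x_n$ give $v\in C^{1+\gamma}$ up to $\pa\om$. The weighted Schauder theory (Theorem~\ref{thm1.1} with $q=(1-p)/2$) then gives $d^{(1-p)/2}\Delta v\in\C^\gamma$. This is what tames the bad zeroth-order term: in the equation for $v^h_\lambda$ it becomes $d^{(1-p)/2}f$ with $f=(d^{(1-p)/2}\Delta v)\cdot(v^h_\lambda/d)\cdot(\text{bounded})\in\C^\gamma$. That is an admissible right-hand side since $(1-p)/2>-p$. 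The bootstrap in $\lambda$ then yields $\pa_t v\in\C^{2+\gamma}$, hence $|\pa_t v|\le Cd$, and inductively the same for $\pa_t^k v$.

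Your uniqueness sketch is also not justified as written. Testing with $\mathrm{sgn}^+$ and a spatial cutoff $\zeta_\epsilon$ leaves the term $|p|\int(v_2-v_1)^+\Delta\zeta_\epsilon\lesssim\epsilon^{-1}\sup_{d<2\epsilon}|v_1-v_2|$. Continuity does not make this small; you would need $|v_1-v_2|=o(d)$, which is unknown for a general admissible solution. Suppose instead you use a shifted sign $\eta_\delta(p(v_1-v_2))$, which by continuity is compactly supported in $\om$ and kills the spatial boundary terms. Then $\int\pa_t(v_1^p-v_2^p)\eta_\delta$ is no longer an exact time derivative, and letting $\delta\to0$ requires $\pa_t(v^p)\in L^1$ near $\pa\om$, which the admissible class does not provide. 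The paper handles exactly this with Otto's doubling of the time variable. Freezing $w(s)$ makes $\frac{d}{dt}\int\Phi_\delta(v(t),w(s))\,dx$ exact, the time derivatives are moved onto the test function, and only $0\le\Phi_\delta\le v^p+w^p\in L^1$ (condition (ii)) is needed for dominated convergence. A minor point: the separable ansatz gives the algebraic profile $a(t)=(a_0^{p-1}+(1-p)\lambda t)^{1/(p-1)}$, not an exponential one; this is harmless.
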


Without this non-degenerate condition \eqref{eq:inequality for v0}, the initial pressure may vanish too rapidly near the boundary or possess uncontrolled zero sets, and such degeneracies may not be removed by the evolution. Indeed, for related pressure equations, zero sets may carry additional trace information, and the same continuous initial datum may admit distinct continuations or infinitely many solutions with different shrinking interfaces
\cites{chasseigne2003pressure,bertsch1992nonuniqueness,vazquez2003darcy}. Thus, Theorem \ref{thm:main thm 1} may fail without \eqref{eq:inequality for v0}.

The regularity exponent in Theorem \ref{thm:main thm 1} is dictated by a special separable solution. Let $S$ be the unique nontrivial
nonnegative solution of the singular elliptic problem
\be\label{eq:equation of S}
-\Delta S=\frac{1}{1-p}S^p
\quad\text{in }\om,
\qquad
S=0
\quad\text{on }\pa\om.
\ee
Then
\be\label{eq:giant solution}
U(x,t):=t^{\frac{1}{p-1}}S(x),
\qquad (x,t)\in\om\times(0,\infty),
\ee
is an exact solution of \eqref{eq:equation for v}. It is usually
referred to as the \emph{friendly giant solution}.
Gui--Lin~\cite{gui1993regularity} proved that
\[
S\in C^{1,p+1}(\ol\om).
\]
More precisely, the boundary expansion underlying their result shows
that, in general,
\[
S\notin C^{1,p+1+\varepsilon}(\ol\om)
\qquad\text{for any }0<\varepsilon<-p.
\]
Since $U(\cdot,t)$ has exactly the same spatial regularity as $S$ for
every $t>0$, the exponent $p+1$ in
\eqref{eq:main thm 1 regularity in x} cannot in general be improved.

Our second main result gives a fine description of the long-time
behavior. In addition to the leading-order convergence to $S$, it
identifies the first correction term and yields convergence to a
suitable time translation of the friendly giant solution.

\begin{thm}\label{main thm 1.3}
Let $\om\subset\R^n$ be a bounded smooth domain, let $p\in(-1,0)$,
and let $v_0\in C(\ol\om)$ be positive in $\om$ and satisfy
\eqref{eq:inequality for v0}. Let $v$ be the admissible solution of
\eqref{eq:equation for v} with $T=+\infty$. Then there exist constants
$ A_1\geq0, \tau^* \geq 0, C_1>1 $ and $\gamma_1>0$
such that, for every $\delta>0$,
\be\label{eq:expansion of v in long time}
\left\|
t^{\frac{1}{1-p}}v(\cdot,t)
-S+\frac{A_1}{t}S
\right\|_{C^{1,p+1}(\ol\om)}
\leq \frac{C_1}{t^{1+\gamma_1}}
\quad\text{for all }t>\delta,
\ee
and
\be\label{eq:long time convergence of u}
\left\|
\frac{v(\cdot,t)}{U(\cdot,t+\tau^*)}-1
\right\|_{C^{p+1}(\ol\om)}
\leq \frac{C_1}{t^{1+\gamma_1}}
\quad\text{for all }t>\delta.
\ee
Here $S$ and $U$ are given by \eqref{eq:equation of S} and
\eqref{eq:giant solution}, respectively.

The constant $\gamma_1$ depends only on $n$, $p$, and $\om$, while
$C_1$ depends only on $n$, $p$, $\delta$, $\om$, and $c_0$.
The coefficient $A_1$, and hence the time shift $\tau^*$, may depend
on $v_0$, but their sizes are controlled in terms of
$n$, $p$, $\om$, and $c_0$.

Furthermore, for every $l\in\mathbb N$, there exists a constant
$C_2>0$, depending only on $n$, $p$, $\delta$, $\om$, $l$, and $c_0$,
such that
\be\label{eq:decay rate of v}
\left\|
\pa_t^l v(\cdot,t)
\right\|_{C^{1,p+1}(\ol\om)}
\leq C_2t^{-\frac{1}{1-p}-l}
\quad\text{for all }t>\delta.
\ee
\end{thm}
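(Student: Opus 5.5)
The plan is to work in rescaled variables that turn convergence toward the friendly giant into a stability problem for a stationary point. Set $\tau=\log t$ and $w(x,\tau):=t^{\frac{1}{1-p}}v(x,t)$. A direct computation from \eqref{eq:equation for v} gives
\[
\pa_\tau(w^p)=p\Delta w+\frac{p}{1-p}w^p .
\]
The stationary solution of this equation is $S$ by \eqref{eq:equation of S}. By \eqref{eq:comparable to distance function0} on bounded time intervals, together with comparison against the separable solutions $(t+\tau_\pm)^{\frac1{p-1}}S$ (both above and below), we get $c\,d\le w\le C\,d$ uniformly for $\tau\ge\log\delta$. We then write $w=S(1+h)$, so that $h$ is bounded. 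Linearizing, $h$ solves a weighted parabolic equation of the form
\[
S^{p}\pa_\tau h=\mathcal L h+N(h),\qquad \mathcal L h:=S^{-1}\bigl(\Delta(Sh)+\tfrac{p}{1-p}S^{p}h\bigr)\cdot\tfrac{1}{\cdot}\ \text{(suitably normalized)},
\]
and $\mathcal L$ is self-adjoint in $L^2(S^{p+1}dx)$. Here $N$ is quadratic in $h$.

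The spectral picture is explicit because of time-translation invariance. The family $(t+\tau)^{\frac1{p-1}}t^{\frac1{1-p}}S=(1+\tau/t)^{\frac1{p-1}}S$ shows that $h\equiv$ const is an eigenfunction with decay rate $e^{-\tau}=1/t$. The principal eigenvalue of the weighted problem should correspond exactly to this mode; I expect the principal eigenfunction to be $1$, which gives the $A_1/t$ term. The next eigenvalue gives decay $e^{-(1+\gamma_1)\tau}$ with $\gamma_1>0$ depending only on $n,p,\om$.

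The key steps, in order, are the following.
\begin{enumerate}
\item Obtain uniform-in-$\tau$ $C^{1,p+1}$ bounds for $w$ and all $\pa_\tau^k w$. These come from the regularity estimates behind Theorem~\ref{thm:main thm 1}, applied on unit $\tau$-intervals, using scale invariance and the uniform comparability to $d$. They yield \eqref{eq:decay rate of v} after undoing the scaling, since $\pa_t=t^{-1}\pa_\tau$.
\item Establish exponential convergence $h\to0$. We first get $\|h\|_{L^\infty}\lesssim 1/t$, either by comparison with the translated giants or by an energy/Łojasiewicz-type argument. Rate $1/t$ then follows from the spectral gap.
\item Project $h$ onto the first eigenfunction, $a(\tau):=\int hS^{p+1}/\int S^{p+1}$. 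Show that $a(\tau)e^{\tau}\to -\tfrac{1}{1-p}\cdot$const$=:-A_1$, with the sign fixed by the expansion of $(1+\tau^*/t)^{\frac1{p-1}}$. Show also that the orthogonal remainder decays like $e^{-(1+\gamma_1)\tau}$ in $L^2(S^{p+1})$; the quadratic term $N$ is $O(e^{-2\tau})$ and is harmless once $\gamma_1<1$ is chosen.
\item Upgrade the $L^2$ rates to $C^{1,p+1}(\ol\om)$. We apply the boundary regularity estimates to the linear equation satisfied by $w-S+A_1e^{-\tau}S$, whose right-hand side is small. This gives \eqref{eq:expansion of v in long time}.
\item Set $\tau^*:=(1-p)A_1$. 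Then $t^{\frac1{1-p}}U(\cdot,t+\tau^*)=S(1-A_1/t+O(t^{-2}))$, so the quotient $v/U(\cdot,t+\tau^*)-1=(w-\tilde S)/\tilde S$ is $O(t^{-1-\gamma_1})$ in $C^{1,p+1}$ after division. Dividing a $C^{1,p+1}$ function vanishing on $\pa\om$ by $S\sim d$ costs one derivative, which gives \eqref{eq:long time convergence of u} in $C^{p+1}$.
\end{enumerate}
Nonnegativity $A_1\ge0$ follows from $v\le U(\cdot,t)$ for large times, or more precisely from a comparison with $\sup$ translates.

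The main obstacle will be step 4, together with the spectral theory it relies on. The operator is degenerate: the weight $S^{p}\sim d^p$ blows up at the boundary. So both compactness of the resolvent and a discrete spectrum with gap, and the passage from weighted $L^2$ decay to $C^{1,p+1}(\ol\om)$ decay, need the paper's optimal boundary Schauder theory for equations with coefficient $d^{p}$ in front of $\pa_t$.

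I would first try to prove compactness via the Hardy inequality $\int d^{p-1}\phi^2\lesssim\int|\nabla\phi|^2$, valid since $p>-1$. For the upgrade, I would apply the boundary estimates of Theorem~\ref{thm:main thm 1} to difference quotients in $\tau$. Verifying that the constant mode really is the principal one (that is, that no eigenvalue lies strictly between $0$ and the rate $1$) is also delicate. I would handle it by noting that $\pa_\tau w$ has a sign, via the Aronson–Bénilan-type monotonicity inherited from comparison with translates, which forces the leading mode to be the positive eigenfunction.
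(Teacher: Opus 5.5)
Your proposal is correct and follows essentially the same route as the paper. You rescale to $\theta=t^{\frac{1}{1-p}}v$, get $O(e^{-\tau})$ decay from the translated friendly giants, and use Hardy's inequality to obtain a discrete weighted spectrum whose first eigenpair is $(1,S)$; this is your constant mode in the multiplicative normalization $w=S(1+h)$. You then project to extract $A_1$ and the gap $\gamma_1$, upgrade through the linear Schauder theory, and set $\tau^*=(1-p)A_1$.

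Two small remarks. First, the principality of the constant mode is immediate because $S>0$ is an eigenfunction, so the sign argument for $\partial_\tau w$ is not needed. Second, the paper's upgrade applies the $L^r$-to-$\mathscr{C}^{2+\gamma}$ cutoff estimate (Theorem~\ref{thm:cut off estimate}) to the linear equation for $\widetilde h$, rather than Theorem~\ref{thm:main thm 1}.
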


The quotient in \eqref{eq:long time convergence of u} is understood
through its continuous extension to $\ol\om$. Such an extension is
well defined because both $v(\cdot,t)$ and $U(\cdot,t+\tau^*)$ vanish
linearly on $\pa\om$.

To place Theorems \ref{thm:main thm 1} and
\ref{main thm 1.3} in a broader context, we first observe that,
for positive solutions, equation \eqref{eq:equation for v}
can equivalently be written as
\begin{equation}\label{eq:nondivergence-form-intro}
v_t=v^{1-p}\Delta v.
\end{equation}
Hence the diffusion coefficient vanishes at the lateral boundary. If
we set $\alpha:=1-p,$
then $\alpha\in(1,2)$, and \eqref{eq:nondivergence-form-intro} becomes
\[
v_t=v^\alpha\Delta v.
\]
Consequently, although the equation is uniformly parabolic on every
compact subset of $\Omega\times(0,+\infty)$, it is strongly degenerate
near $\partial\Omega$. If a Hopf-type boundary behavior
$
v(x,t)\asymp d(x),
$
is available, then the leading coefficient behaves like
$d(x)^\alpha$. Moreover, $v^p\asymp d(x)^p$, and
\[
d^p\in L^1(\Omega)
\quad\Longleftrightarrow\quad
p>-1.
\]
Thus, the range $-1<p<0$ is naturally compatible with the expected
linear vanishing of admissible solutions at the boundary.

The equation belongs to the general family of nonlinear filtration
equations:
\begin{equation}\label{eq:filtration-intro}
u_t=\Delta\Phi_m(u),
\qquad
\Phi_m(s):=
\begin{cases}
\dfrac{s^m}{m},&m\neq0,\\[2mm]
\log s,&m=0.
\end{cases}
\end{equation}
The normalization in \eqref{eq:filtration-intro} has the advantage that
$\Phi_m'(s)=s^{m-1}>0$ for every $m\in\mathbb{R}$. The cases $m>1$,
$m=1$, $0<m<1$, $m=0$, and $m<0$ are commonly referred to,
respectively, as the porous medium, linear heat, fast diffusion,
logarithmic diffusion, and very fast or super-fast diffusion regimes.
Equivalently, in the formulation for $v=u^m$:
\[
\partial_t(v^p)-p\Delta v=0,\qquad p=\frac1m,
\]
the porous medium and fast diffusion ranges correspond to
$0<p<1$ and $p>1$, respectively. The present problem corresponds to
$m=1/p<-1$, and therefore lies in the negative-power, super-fast
diffusion range. Notice also that the boundary condition $v=0$ is
formally transformed into the infinite boundary condition $u=+\infty$.
This is one reason why it is preferable to formulate the problem in
terms of $v$.

Nonlinear filtration equations arise from density-dependent diffusion
and Darcy-type laws and have applications in flows through porous
media, gas kinetics, plasma physics, thin-film dynamics, and geometric
evolution equations. We refer to
\cites{daskalopoulos2007degenerate,vazquez2006smoothing,vazquez2007porous}
for systematic accounts of their analytical and physical background.
The negative-power regime is substantially more delicate: existence
may depend sensitively on the class of data and on the behavior at
spatial infinity or at the boundary. Some foundational results
on very fast diffusion, nonexistence, and extended solutions can be
found in
\cites{daskalopoulos1997nonlinear,vazquez1992nonexistence,
chasseigne2002theory,bonforte2010positivity}.

We next recall some characteristic properties of the fast diffusion
range. For $0<m<1$, the diffusivity $u^{m-1}$ becomes singular near the
zero set. As a result, nontrivial nonnegative solutions typically
exhibit infinite speed of propagation: compactly supported initial data
produce solutions that become positive everywhere in the interior for
every positive time. This is accompanied by strong $L^q$--$L^\infty$
smoothing effects and intrinsic Harnack inequalities; see, among many
others,
\cites{benilan1981regularizing,herrero1985cauchy,
bonforte2006global,bonforte2010positivity,vazquez2006smoothing}.
For the homogeneous Dirichlet problem in a bounded domain, solutions
vanish in finite time. If $T^*$ denotes the extinction time and
$v=u^m$, then, in the usual subcritical Dirichlet range, the global
Harnack principle gives the two-sided estimate
\be\label{eq:global harnack inequality}
0<
\inf_{\Omega}\frac{v}{d}
\leq
\sup_{\Omega}\frac{v}{d}
<\infty,
\ee
away from the initial time. Thus, the flux variable $v$ vanishes
linearly at the boundary. Quantitative positivity estimates, global
Harnack principles, and their relation to extinction behavior were
developed in
\cites{dibenedetto1991local,bonforte2006global,
bonforte2012behaviour,bonforte2024cauchy}.

The estimate \eqref{eq:global harnack inequality} determines the size of the solution
near the boundary, but by itself does not yield differentiability of
$v/d$. Obtaining sharp boundary regularity is more subtle because the
equation loses uniform parabolicity precisely where the Dirichlet
condition is imposed. Jin--Xiong proved optimal boundary regularity
in the subcritical and critical Sobolev regimes
\cite{jin2023optimal} and subsequently established global H\"older
gradient estimates and optimal global regularity for bounded positive
 solutions \cite{jin2025regularity}. In particular, when $p>1$ is
not an integer, the natural spatial regularity of the flux variable is
$C^{2+p}(\overline{\Omega})$, and this exponent is generally sharp;
for integral $p$, smoothness up to the boundary can be recovered.
These regularity results also play an important role in the analysis of
extinction profiles and convergence rates. This topic has been extensively studied in, e.g., 
\cites{berryman1980stability,feireisl2000convergence,
bonforte2021sharp,akagi2016stability,akagi2023rates,
choi2023asymptotics,choi2024finitedimensional,jin2026bubbledynamics}.

The porous medium range presents the opposite propagation mechanism.
For $m>1$, the diffusivity vanishes at zero, and compactly supported
solutions propagate with finite speed. Their positivity sets therefore
have moving free boundaries. The theory of weak solutions, local
H\"older regularity, and regularizing effects goes back to
\cites{oleinik1958cauchy,aronson1979regularite,
dibenedetto1993degenerate,vazquez2007porous}. The regularity of
solutions and free boundaries was developed through a series of works,
including
\cites{caffarelli1980regularity,caffarelli1987lipschitz,
caffarelli1990regularity,daskalopoulos1998regularity,
daskalopoulos2001all,kienzler2018flatness}. Higher regularity of the
interface generally requires nondegeneracy, flatness, or suitable
regularity and compatibility assumptions on the initial data.

For the porous medium equation in a bounded domain with zero Dirichlet
data, the behavior becomes more regular after the support has reached
the boundary. Large-time estimates and convergence to the separable
``friendly giant'' profile were studied in
\cites{aronson1981large,vazquez2004dirichlet}. More recently, Jin--Ros-Oton--Xiong  \cite{jin2024optimal} proved the optimal eventual regularity
\[
u^m(\cdot,t)\in C^{2+\frac1m}(\overline{\Omega})
\]
after a suitable waiting time, together with smoothness in time and
refined large-time asymptotics. In terms of the
flux variable $v=u^m$ and $p=1/m\in(0,1)$, this is precisely the
$C^{2+p}$ scale. Thus, the optimal boundary regularity $C^{2+p}$
appears naturally in both the fast diffusion range $p>1$ and the
porous medium range $0<p<1$, although the mechanisms leading to it are
quite different.

The linearization of \eqref{eq:nondivergence-form-intro} reflects the
same boundary geometry. If $\bar v$ is a reference solution and
$v=\bar v+\varepsilon\phi$, then the first-order linearized equation is
\begin{equation}\label{eq:linearization-intro}
\phi_t-\bar v^\alpha\Delta\phi
-\alpha\bar v^{\alpha-1}(\Delta\bar v)\phi=0,
\qquad
\alpha=1-p\in(1,2).
\end{equation}
When $\bar v\asymp d$, the principal part of
\eqref{eq:linearization-intro} is locally modeled, after flattening the
boundary, by
\[
\partial_t-x_n^\alpha a_{ij}(x,t)D_{ij}.
\]
Weighted Sobolev solvability and regularity for linear parabolic and
elliptic equations with singular or degenerate coefficients have been
developed by Dong and his collaborators in both divergence and
nondivergence forms
\cites{dong2021parabolic,dong2023parabolic,
dong2023degenerate,dong2024nondivergence}. In particular, the works of
Dong, Phan, and Tran treat leading coefficients comparable to
$x_n^\alpha$ for $\alpha\in(0,2)$, a range that includes
$\al=1-p$. More recent developments include boundary
Schauder-type estimates under partially Dini mean oscillation
assumptions and weighted Sobolev estimates for strongly degenerate
operators
\cites{dong2025schaudertypeestimatesdegenerate,
dong2026degenerate,dong2025nondivergence,dong2026sobolev}.
A Schauder theory tailored specifically to linearized very fast
diffusion equations in bounded domains has also been developed in
\cite{JTXZ}. These linear results provide important guidance, although
the coefficient $\bar v^\alpha$ in
\eqref{eq:linearization-intro} is generated by the unknown nonlinear
solution and must be controlled simultaneously with its boundary
behavior.
For related boundary regularity and solvability results in the fully
nonlinear porous medium setting, we refer to Lee--Yun
\cite{lee2025boundary} and Yun \cite{yun2024regularity}.

The remainder of the paper is organized as follows. In Section~\ref{sec:existence}, we adapt the general
$L^1$-contraction argument to our equation, derive the comparison
principle, and prove the existence and uniqueness of admissible
solutions. In Section~\ref{sec:C1alpha}, we develop
boundary $C^{1,\gamma}$ estimates for the linear degenerate model
equations and then apply them to the nonlinear equation to obtain
global boundary regularity. In Section~\ref{sec:regularity}, we
introduce the intrinsic weighted H\"older and Schauder spaces and
derive the higher-order estimates leading to smoothness in time and
optimal $C^{1,p+1}$ spatial regularity. Finally, we study the long-time
behavior of the solution and establish the refined asymptotic
expansion and decay estimates.

\medskip

\textbf{AI statement:} All the mathematical results, overall proof architecture, and key arguments were developed by the authors. During later-stage revisions, AI tools were used to assist with editing and with simplifying some auxiliary arguments. All AI-assisted modifications were independently reviewed and verified by the authors, who take full responsibility for the final manuscript. 

\section{Existence and uniqueness of the solution}\label{sec:existence}

Let $n\ge2$, $-1<p<0$, and $R>0$. We write
$x=(x',x_n)\in\R^{n-1}\times\R$, and let $x_0\in\R^n$ and $t_0\in\R$.
The open ball and open half-ball are defined by
\[
B_R(x_0):=\bigl\{x\in\R^n \mid |x-x_0|<R\bigr\},
\qquad
B_R^+(x_0):= B_R(x_0)\cap\bigl\{x\in\R^n \mid x_n>0\bigr\}.
\]
The corresponding backward parabolic cylinders are defined by
\[
Q_R(x_0,t_0):= B_R(x_0)\times\bigl(t_0-R^{p+1},\,t_0\bigr],
\qquad
Q_R^+(x_0,t_0):= B_R^+(x_0)\times\bigl(t_0-R^{p+1},\,t_0\bigr].
\]
We denote the parabolic boundary of $Q_R(x_0,t_0)$ by
$\partial_p Q_R(x_0,t_0)$; namely,
\[
\partial_p Q_R(x_0,t_0)
:= \bigl(\partial B_R(x_0)\times (t_0-R^{p+1},\,t_0]\bigr)
\,\bigcup\, \bigl(B_R(x_0)\times\{t=t_0-R^{p+1}\}\bigr).
\]
When $(x_0,t_0)=(0,0)$, we use the abbreviations
\[
B_R:=B_R(0),\quad B_R^+:=B_R^+(0),\quad
Q_R:=Q_R(0,0),\quad Q_R^+:=Q_R^+(0,0).
\]

For $\alpha\in(0,1)$, we define the H\"older seminorm with respect to
the spatial variable by
\[
[u]_{C_x^\alpha(\Omega\times(0,T])}
:=
\sup_{t\in(0,T]}\sup_{\substack{x,y\in\Omega\\ x\neq y}}
\frac{|u(x,t)-u(y,t)|}{|x-y|^\alpha}.
\]
We say that $u\in C_x^\alpha(\Omega\times(0,T])$ if
\[
\|u\|_{L^\infty(\Omega\times(0,T])}
+
[u]_{C_x^\alpha(\Omega\times(0,T])}
<\infty.
\]

Similarly, the H\"older seminorm with respect to the time variable is
defined by
\[
[u]_{C_t^\alpha(\Omega\times(0,T])}
:=
\sup_{x\in\Omega}\sup_{\substack{s,t\in(0,T]\\ s\neq t}}
\frac{|u(x,t)-u(x,s)|}{|t-s|^\alpha}.
\]
We say that $u\in C_t^\alpha(\Omega\times(0,T])$ if
\[
\|u\|_{L^\infty(\Omega\times(0,T])}
+
[u]_{C_t^\alpha(\Omega\times(0,T])}
<\infty.
\]

Finally, we define
\[
C_{x,t}^{\alpha,\beta}(\Omega\times(0,T])
:=
C_x^\alpha(\Omega\times(0,T])
\cap
C_t^{\beta}(\Omega\times(0,T]),
\]
equipped with the norm
\[
\|u\|_{C_{x,t}^{\alpha,\beta}(\Omega\times(0,T])}
:=
\|u\|_{L^\infty(\Omega\times(0,T])}
+
[u]_{C_x^\alpha(\Omega\times(0,T])}
+
[u]_{C_t^{\beta}(\Omega\times(0,T])}.
\]
For notational convenience, we use \(\|\Omega\|_{C^2}\) or
$\|\Omega\|_{C^{2,\alpha}}$ to denote collectively the \(C^2\) or
$C^{2,\alpha}$-character, respectively, together with the diameter of
\(\Omega\).

\begin{thm}[$L^1$-contraction principle]\label{thm:L1-contraction}
Let $v$ and $w$ be two admissible solutions of
\eqref{eq:equation for v}, with initial data $v_0$ and $w_0$,
respectively. Then, for every $t\in[0,T)$,
\begin{equation}\label{eq:L1-contraction}
\int_\Omega
\bigl[v^p(x,t)-w^p(x,t)\bigr]_+\,dx
\leq
\int_\Omega
\bigl[v_0^p(x)-w_0^p(x)\bigr]_+\,dx .
\end{equation}
\end{thm}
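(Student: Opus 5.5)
The plan is the standard Kato-type argument for $L^1$-contraction, adapted to the fact that $v^p,w^p$ blow up at $\partial\Omega$ while $v,w$ vanish there. Set $u:=v^p$, $\tilde u:=w^p$. The equation reads $\partial_t u=p\Delta v$, and since $p<0$ the map $s\mapsto s^p$ is decreasing. Hence
\[
\{u>\tilde u\}=\{v<w\},
\]
and this set is where $p(v-w)>0$. We test the difference of the equations against a smooth approximation of $\mathrm{sign}_+(u-\tilde u)=\mathrm{sign}_+(p(v-w))$, cut off away from the boundary. Concretely, fix $0<\tau<t<T$ and a smooth nondecreasing $\eta_\varepsilon:\R\to[0,1]$ with $\eta_\varepsilon(s)=0$ for $s\le0$ and $\eta_\varepsilon(s)=1$ for $s\ge\varepsilon$. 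For $\delta>0$ let $\zeta_\delta\in C_c^\infty(\Omega)$, $0\le\zeta_\delta\le1$, with $\zeta_\delta=1$ on $\{d>2\delta\}$ and $|\nabla\zeta_\delta|\lesssim\delta^{-1}$, $|\Delta\zeta_\delta|\lesssim\delta^{-2}$.

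Multiply $\partial_t(u-\tilde u)=p\Delta(v-w)$ by $\eta_\varepsilon(p(v-w))\zeta_\delta$ and integrate over $\Omega\times(\tau,t)$. This is legitimate because $v,w\in C^{2,1}$ in the interior and the cutoff is compactly supported. For the spatial term we integrate by parts once:
\[
\int p\Delta(v-w)\,\eta_\varepsilon\zeta_\delta
=-\int \eta_\varepsilon'\,|p\nabla(v-w)|^2\zeta_\delta
-\int \eta_\varepsilon\, p\nabla(v-w)\cdot\nabla\zeta_\delta .
\]
The first term is $\le0$ and we drop it. Letting $\varepsilon\to0$ in the time term, the left side becomes
\[
\int_\tau^t\!\!\int \partial_t(u-\tilde u)\,\mathbf 1_{\{u>\tilde u\}}\zeta_\delta
=\int [u-\tilde u]_+(t)\zeta_\delta-\int[u-\tilde u]_+(\tau)\zeta_\delta .
\]
This identity uses that $[\,\cdot\,]_+$ is Lipschitz and that $u-\tilde u$ is $C^1$ in $t$ on the support of $\zeta_\delta$. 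For the boundary term, it is cleaner to integrate by parts a second time using the convex function $\Psi(s)=s_+$. After $\varepsilon\to0$, the quantity $\eta_\varepsilon(p(v-w))\,p\nabla(v-w)$ converges to $\nabla[p(v-w)]_+$, so we obtain
\[
-\int\nabla[p(v-w)]_+\cdot\nabla\zeta_\delta=\int [p(v-w)]_+\Delta\zeta_\delta .
\]

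The key step, and the main obstacle, is showing $\int_\tau^t\int[p(v-w)]_+\,\Delta\zeta_\delta\to0$, or at least has nonpositive $\limsup$, as $\delta\to0$. The difficulty is that $\Delta\zeta_\delta\sim\delta^{-2}$ on the strip $\{\delta<d<2\delta\}$, which has measure $\sim\delta$, while we only know $v,w\in C(\ol\Omega\times[0,T))$ with $v=w=0$ on $\partial\Omega$. Continuity alone gives only $o(1)\cdot\delta^{-1}$, which is not enough. I would first choose $\zeta_\delta$ adapted to the geometry, namely $\zeta_\delta=\phi(d/\delta)$ in a tubular neighborhood where $d$ is smooth. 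Then
\[
\Delta\zeta_\delta=\delta^{-2}\phi''(d/\delta)+\delta^{-1}\phi'(d/\delta)\Delta d .
\]
The $\delta^{-1}$ part is harmless: it is bounded by $C\delta^{-1}\cdot\delta\cdot\sup_{\{d<2\delta\}}|v-w|\to0$. For the $\delta^{-2}\phi''$ part, I would integrate in the normal variable. Writing $f=[p(v-w)]_+$, which vanishes on $\partial\Omega$, we have
\[
\int_0^{2\delta}f\,\delta^{-2}\phi''(r/\delta)\,dr=-\int_0^{2\delta}\partial_r f\,\delta^{-1}\phi'(r/\delta)\,dr ,
\]
which still needs gradient control of $v-w$ near the boundary. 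This is not available from admissibility alone.

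Because of this, I would instead avoid the second integration by parts and handle $-\int \eta_\varepsilon p\nabla(v-w)\cdot\nabla\zeta_\delta$ directly via the $L^1$ structure in time. Integrating the equation over $\{d>\delta\}$ expresses the boundary flux $\int_{\{d=\delta\}}p\,\partial_\nu v$ as $-\frac{d}{dt}\int_{\{d>\delta\}}v^p$. Since $v^p\in C([0,T);L^1)$, the time-integrated flux through $\{d=\delta\}$ converges, as $\delta\to0$, to the change of $\int_\Omega v^p$, which is finite. Averaging $\zeta_\delta$ over the level sets $\{d=s\}$, $s\in(\delta,2\delta)$, turns the boundary term into an average of such fluxes restricted to $\{v<w\}$. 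On the level sets, $v<w$ forces the outward normal derivatives to be ordered in the favorable direction in the limit; in the worst case I would bound them using the total fluxes of $v$ and $w$, which are controlled by the $L^1$-continuity. I expect this flux-sign argument to be the delicate point.

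Finally, let $\delta\to0$ by monotone convergence, then $\tau\to0$ using $u,\tilde u\in C([0,T);L^1(\Omega))$. This yields \eqref{eq:L1-contraction}.
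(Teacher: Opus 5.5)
Your argument stalls at the boundary term $-\int_\tau^t\!\int_\Omega \eta_\varepsilon\, p\nabla(v-w)\cdot\nabla\zeta_\delta$, and the flux argument you sketch does not close it. On interior level sets $\{d=s\}$, the condition $v<w$ gives no pointwise ordering of $\partial_\nu v$ and $\partial_\nu w$. Moreover, admissible solutions are not assumed differentiable up to $\partial\Omega$. Bounding the restricted flux by the total fluxes of $v$ and $w$ leaves an error of size $\bigl|\int_\Omega v^p(t)-\int_\Omega v^p(\tau)\bigr|+\bigl|\int_\Omega w^p(t)-\int_\Omega w^p(\tau)\bigr|$. This does not vanish as $\delta\to0$, because $\int_\Omega v^p$ is not conserved here. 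For the friendly giant, $\int_\Omega U^p=t^{p/(p-1)}\int_\Omega S^p$ is strictly increasing, since the boundary acts as a source. So you would only obtain $\int_\Omega[v^p-w^p]_+(t)\le\int_\Omega[v^p-w^p]_+(\tau)$ plus a non-vanishing error. What is missing is a mechanism that turns the continuous vanishing of $v-w$ on $\partial\Omega$ into a vanishing or favorably signed boundary contribution. With your smooth profile $\phi(d/\delta)$, $\phi'$ supported in $(1,2)$, this cannot work: as you noticed, the term becomes $-\delta^{-2}\int\phi''(d/\delta)(w-v)_+$, and $\phi''$ takes both signs.

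The paper instead uses $\eta_\delta$ vanishing on $(-\infty,\delta]$ rather than $(-\infty,0]$. Since $v(\cdot,t)$ and $w(\cdot,s)$ are continuous on $\overline\Omega$ and vanish on $\partial\Omega$, $\eta_\delta(p(v(t)-w(s)))$ is compactly supported in $\Omega$. Hence no cutoff and no boundary term appear, and the diffusion term is just $-p^2\int_\Omega\eta_\delta'|\nabla(v(t)-w(s))|^2\le0$. The price is that $\partial_t(v^p-w^p)\,\eta_\delta(p(v-w))$ is not an exact time derivative, and $\partial_t v^p$ is not known to be integrable up to $\partial\Omega$, so $\delta\to0$ cannot be taken in the time term directly. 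This is why the paper doubles the time variable (Otto) and uses the potentials $\Phi_\delta,\Psi_\delta$: they put the time derivatives on $\varphi(t,s)$ and need only $v^p,w^p\in C([0,T);L^1(\Omega))$.

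Alternatively, your single-time scheme can be repaired with a concave Lipschitz profile reaching the boundary. Set $g:=(w-v)_+$ and $\zeta:=\min\{(d-\kappa)_+/\delta,1\}$, and use tubular coordinates $x=y+r\nu(y)$, $dx=J\,d\sigma\,dr$. After $\varepsilon\to0$ the boundary term is $p\int_\tau^t\int_\Omega\nabla g\cdot\nabla\zeta$, i.e.
\[
\frac{p}{\delta}\int_\tau^t\!\!\int_{\partial\Omega}\Bigl([gJ]_{r=\kappa}^{r=\kappa+\delta}-\int_\kappa^{\kappa+\delta}g\,\partial_rJ\,dr\Bigr)\,d\sigma\,ds
\leq\frac{C}{\delta}\sup_{\{d\leq\kappa\}\times[\tau,t]}|v-w|
+C\sup_{\{d\leq\kappa+\delta\}\times[\tau,t]}|v-w|,
\]
because $p<0$ makes the contribution at $r=\kappa+\delta$ nonpositive. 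Letting $\kappa\to0$ and then $\delta\to0$ kills the right-hand side, by uniform continuity of $v,w$ on $\overline\Omega\times[\tau,t]$. Monotone convergence in $\zeta$, followed by $\tau\to0$, then finishes as you planned.
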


\begin{proof}
The proof is based on Otto's time-doubling argument for
$L^1$-contraction \cite{otto1996contraction}, in the form adapted by
Hissink Muller and Sonner
\cite{hissinkmuller2022wellposedness}*{Lemma 4.1}.
For completeness, we provide the details in the present setting.

We use two regularization parameters: $\delta$ for the approximation
of the positive-sign function and $\varepsilon$ for convolution in
the two time variables. For every $\delta>0$, choose a nondecreasing
function $\eta_\delta\in C^\infty(\mathbb R)$ such that
\[
0\leq\eta_\delta\leq1,
\qquad
\eta_\delta(r)=0\quad\text{for }r\leq\delta,
\qquad
\eta_\delta(r)=1\quad\text{for }r\geq2\delta.
\]
Then $\eta_\delta'\geq0$, $\eta_\delta(0)=0$, and
\[
\eta_\delta(r)\longrightarrow\operatorname{sgn}_{+}(r)
\qquad\text{for every }r\in\mathbb R.
\]

For $a,b>0$, define
\begin{align*}
\Phi_\delta(a,b)
&:=\int_b^a
p r^{p-1}\eta_\delta\bigl(p(r-b)\bigr)\,dr,\\
\Psi_\delta(a,b)
&:=-\int_a^b
p r^{p-1}\eta_\delta\bigl(p(a-r)\bigr)\,dr.
\end{align*}
Since $p<0$, both functions vanish when $a\geq b$. When $a<b$, the
bounds $0\leq\eta_\delta\leq1$ imply
\begin{equation}\label{eq:PhiPsi-bound}
0\leq \Phi_\delta(a,b),\,\Psi_\delta(a,b)
\leq a^p-b^p=[a^p-b^p]_+.
\end{equation}
Moreover,
\begin{equation}\label{eq:PhiPsi-limit}
\Phi_\delta(a,b),\,\Psi_\delta(a,b)
\longrightarrow[a^p-b^p]_+
\qquad\text{as }\delta\downarrow0.
\end{equation}

For every $s,t\in(0,T)$, we have
\begin{align*}
\frac{d}{dt}\int_\Omega
\Phi_\delta\bigl(v(x,t),w(x,s)\bigr)\,dx
&=
\int_{\om}
\partial_t(v^p)
\eta_\delta\bigl(p(v(t)-w(s))\bigr)\,dx,\\
\frac{d}{ds}\int_\Omega
\Psi_\delta\bigl(v(x,t),w(x,s)\bigr)\,dx
&=
-\int_{\om}
\partial_s(w^p)
\eta_\delta\bigl(p(v(t)-w(s))\bigr)\,dx.
\end{align*}
Combining these identities with the equations satisfied by $v(t)$ and
$w(s)$ yields
\begin{align}
&\frac{d}{dt}\int_\Omega \Phi_\delta(v(t),w(s))\,dx
+\frac{d}{ds}\int_\Omega \Psi_\delta(v(t),w(s))\,dx \notag\\
&\quad
+p^2\int_\Omega
\eta_\delta'\bigl(p(v(t)-w(s))\bigr)
\lvert\nabla v(t)-\nabla w(s)\rvert^2\,dx
=0.
\label{eq:doubled-regularized}
\end{align}

Let $\varphi\in C_c^\infty((0,T)^2)$ be nonnegative. Multiplying
\eqref{eq:doubled-regularized} by $\varphi(t,s)$, integrating with
respect to $(t,s)$, and integrating the first two terms by parts, we
obtain, after dropping the nonnegative spatial term,
\begin{align}
-\int_0^T\!\int_0^T\!\int_\Omega
\bigl(
\Phi_\delta(v(t),w(s))\,\partial_t\varphi(t,s)
+\Psi_\delta(v(t),w(s))\,\partial_s\varphi(t,s)
\bigr)
\,dx\,dt\,ds
\leq0.
\label{eq:doubled-before-limit}
\end{align}
By \eqref{eq:PhiPsi-bound},
\[
0\leq \Phi_\delta(v(t),w(s)),\,\Psi_\delta(v(t),w(s))
\leq v^p(t)+w^p(s),
\]
and the right-hand side is integrable on
$\Omega\times(0,T)^2$. Therefore, by
\eqref{eq:PhiPsi-limit} and the dominated convergence theorem, we may
let $\delta\to 0$ in \eqref{eq:doubled-before-limit} to obtain
\begin{align}
-\int_0^T\!\int_0^T\!\int_\Omega
[v^p(x,t)-w^p(x,s)]_+
\bigl(\partial_t\varphi(t,s)+\partial_s\varphi(t,s)\bigr)
\,dx\,dt\,ds
\leq0.
\label{eq:doubled-kato}
\end{align}

Let $\rho\in C_c^\infty((-1,1))$ be nonnegative and even, with
$\int_\mathbb R\rho=1$, and define
\[
\rho_\varepsilon(r)
:=\frac1\varepsilon\rho\!\left(\frac r\varepsilon\right).
\]
For an arbitrary nonnegative function
$\zeta\in C_c^\infty((0,T))$, choose
\[
\varphi_\varepsilon(t,s)
:=\rho_\varepsilon(t-s)
\zeta\!\left(\frac{t+s}{2}\right).
\]
For sufficiently small $\varepsilon$,
\[
(\partial_t+\partial_s)\varphi_\varepsilon(t,s)
=
\rho_\varepsilon(t-s)
\zeta'\!\left(\frac{t+s}{2}\right).
\]
Since
$v^p,w^p\in C([0,T);L^1(\Omega))$, we may let
$\varepsilon\to 0$ in \eqref{eq:doubled-kato} with this choice
of $\varphi_\varepsilon$. It follows that
\begin{equation}\label{eq:distributional-monotonicity}
-\int_0^T
\left(
\int_\Omega[v^p(x,t)-w^p(x,t)]_+\,dx
\right)\zeta'(t)\,dt
\leq0
\end{equation}
for every nonnegative $\zeta\in C_c^\infty((0,T))$.

Consequently, the continuous function
\[
F(t):=\int_\Omega[v^p(x,t)-w^p(x,t)]_+\,dx
\]
satisfies $F'\leq0$ in the distributional sense and is therefore
nonincreasing on $[0,T)$. Hence $F(t)\leq F(0)$ for every
$t\in[0,T)$, which is precisely \eqref{eq:L1-contraction}.
\end{proof}

As an immediate consequence, we obtain the following comparison
principle.

\begin{thm}[Comparison principle]\label{thm:comparison}
Let $v$ and $w$ be as in Theorem~\ref{thm:L1-contraction}. If
$v_0\leq w_0$ a.e.\ in $\Omega$, then
\[
v(\cdot,t)\leq w(\cdot,t)
\qquad\text{a.e.\ in }\Omega
\]
for every $t\in[0,T)$. In particular, an admissible solution with
prescribed initial data is unique.
\end{thm}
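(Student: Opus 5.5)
The comparison principle follows directly from Theorem~\ref{thm:L1-contraction}, applied with the roles of $v$ and $w$ interchanged. Because $p<0$, the map $r\mapsto r^p$ is strictly decreasing on $(0,\infty)$. So $v_0\le w_0$ a.e.\ in $\Omega$ is equivalent to $w_0^p\le v_0^p$ a.e., where both sides are finite a.e.\ since $v_0,w_0>0$ in $\Omega$. Hence $[w_0^p-v_0^p]_+=0$ a.e. Applying \eqref{eq:L1-contraction} to the ordered pair $(w,v)$ gives
\[
\int_\Omega\bigl[w^p(x,t)-v^p(x,t)\bigr]_+\,dx\le\int_\Omega\bigl[w_0^p-v_0^p\bigr]_+\,dx=0
\]
for every $t\in[0,T)$. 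Therefore $w^p(\cdot,t)\le v^p(\cdot,t)$ a.e.\ in $\Omega$.

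To return to $v$ and $w$, I use that both are positive in $\Omega\times(0,T)$ by Definition~\ref{def:admissible-classical-solution}(i). On this set, $r\mapsto r^p$ can be inverted monotonically, which yields $v(\cdot,t)\le w(\cdot,t)$ a.e.\ for $t\in(0,T)$. At $t=0$ the inequality is the hypothesis itself. The only points needing care are the sets where $v$ or $w$ vanish (on $\partial\Omega$, or where the initial data is zero), since there $r^p=+\infty$. These points do not matter: $\partial\Omega$ has measure zero, and in the interior both functions are positive for $t>0$. One can also avoid the issue by noting that the integrands are defined a.e.\ because $v^p,w^p\in L^1$.

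For uniqueness, let $v$ and $w$ be admissible solutions with the same initial data. Applying the comparison principle in both directions gives $v(\cdot,t)=w(\cdot,t)$ a.e.\ for each $t$. Since both functions are continuous on $\overline\Omega\times[0,T)$, they agree everywhere.

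I do not expect a genuine obstacle. The one step to state carefully is the inversion of the monotone map $r\mapsto r^p$, which reverses inequalities. This is where the sign $p<0$ is used, and it explains why the comparison comes from applying \eqref{eq:L1-contraction} to the pair in the reversed order.
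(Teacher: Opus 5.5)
Your proof is correct and takes the same route as the paper, which presents the comparison principle as an immediate consequence of Theorem~\ref{thm:L1-contraction}. As you do, one applies \eqref{eq:L1-contraction} to the reversed pair $(w,v)$, uses that $r\mapsto r^p$ is decreasing for $p<0$, and obtains uniqueness by comparing in both directions and invoking continuity.
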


Finally, we establish the existence of admissible solutions for a
class of initial data.

\begin{thm}\label{thm:existence}
Let $T>0$, and assume that $v_0\in C(\overline{\Omega})$ satisfies
\[
c_0d(x)\leq v_0(x)\leq C_0d(x)
\qquad \text{in } \Omega
\]
for some constants $0<c_0\leq C_0<+\infty$. Then
\eqref{eq:equation for v} admits a unique admissible solution $v$ on
$\Omega\times[0,T]$. Moreover, there exist constants
$0<c_T\leq C_T<+\infty$, depending only on
$p,T,c_0,C_0,n$, and $\|\Omega\|_{C^2}$, such that
\be\label{eq:comparable to distance function}
c_Td(x)\leq v(x,t)\leq C_Td(x),
\qquad (x,t)\in\Omega\times[0,T].
\ee
\end{thm}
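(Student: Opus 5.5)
Uniqueness is already given by Theorem~\ref{thm:comparison}, so only existence and the two-sided bound \eqref{eq:comparable to distance function} need proof. I will obtain both from explicit sub- and supersolutions of the form $\Phi(t)\psi(x)$, together with an approximation scheme whose limits are controlled by these barriers.

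\textbf{Barriers.} Let $\varphi_1>0$ be the first Dirichlet eigenfunction, with $-\Delta\varphi_1=\lambda_1\varphi_1$ and $\varphi_1\asymp d$.

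For the supersolution I take $\bar v=A(t+\tau)^{\frac1{p-1}}S(x)$. Since $S\asymp d$ by Gui--Lin, choosing $\tau=1$ and $A$ large makes $\bar v(\cdot,0)\ge C_0d\ge v_0$. Then $\bar v$ is a supersolution provided $A^{p}\le A$, i.e.\ $A\ge1$. A direct scaling computation confirms this: $\partial_t(\bar v^p)=A^p\frac{p}{p-1}(t+\tau)^{\frac{1}{p-1}-1}S^p$, while $p\Delta\bar v=-\frac{pA}{1-p}(t+\tau)^{\frac1{p-1}}S^p$. Because $\frac{p}{p-1}=-\frac{p}{1-p}>0$ and the time powers agree, the inequality reduces to $A^p\le A$.

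For the subsolution I take $\underline v=a\,e^{-Kt}\varphi_1$, or alternatively a small multiple of $S$ with the factor $(t+\tau)^{1/(p-1)}$. Using the $S$-based subsolution $a(t+\tau)^{\frac1{p-1}}S$ with $a\le1$ gives $a^p\ge a$, which is the correct direction. At $t=0$ this is below $c_0d$ for $a$ small.

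So the bounds \eqref{eq:comparable to distance function} will follow once the solution is trapped between these two self-similar barriers, with constants depending only on $p,T,c_0,C_0,n,\|\Omega\|_{C^2}$.

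\textbf{Existence via approximation.} I will regularize the degeneracy at the boundary. For $\varepsilon>0$, solve
\[
\partial_t(v^p)=p\Delta v \quad\text{in }\Omega\times(0,T),
\qquad v=\varepsilon\ \text{on }\partial\Omega\times(0,T),
\qquad v(\cdot,0)=v_0+\varepsilon .
\]
Equivalently, this is $v_t=v^{1-p}\Delta v$, which is uniformly parabolic as long as $v\ge\varepsilon$. Classical quasilinear theory (Ladyzhenskaya--Solonnikov--Ural'tseva) gives a smooth solution $v_\varepsilon$ after a mild smoothing of the data.

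Next I compare $v_\varepsilon$ with the barriers shifted by $\varepsilon$. Concretely, $\bar v+\varepsilon$ should still be a supersolution, since $\Delta(\bar v+\varepsilon)=\Delta\bar v$ and $(\bar v+\varepsilon)^{p-1}\le\bar v^{p-1}$ enter with the right sign in the form $v_t=v^{1-p}\Delta v$, where $\Delta\bar v\le0$. The subsolution $\underline v$ is handled similarly. Classical comparison for uniformly parabolic equations then yields
\[
\underline v\le v_\varepsilon\le\bar v+\varepsilon .
\]
The comparison principle also makes $v_\varepsilon$ monotone in $\varepsilon$.

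Interior estimates finish the construction. On compact subsets $v_\varepsilon$ is bounded below uniformly, so the equation is uniformly parabolic there, and Krylov--Safonov plus Schauder estimates give $C^{2,1}_{loc}$ bounds. The limit $v=\lim_{\varepsilon\to0}v_\varepsilon$ is therefore a classical solution satisfying $\underline v\le v\le\bar v$. These barriers give continuity at $\partial\Omega$ with $v=0$ there, and by them $v^p\le Cd^p\in L^1$.

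\textbf{Main obstacle.} The hard part will be continuity up to $t=0$, i.e.\ $v\in C(\overline\Omega\times[0,T))$ and $v^p\in C([0,T);L^1)$.

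\begin{itemize}
\item \emph{Continuity at $t=0$ in the interior.} I would use local barriers at each interior point, built from the continuity of $v_0$ and uniform parabolicity away from $\partial\Omega$.
\item \emph{Continuity near the corner $\partial\Omega\times\{0\}$.} The global barriers $\underline v,\bar v\asymp d$ give $v\to0$ uniformly there.
\item \emph{$L^1$-continuity of $v^p$.} Dominated convergence applies with dominant $d^p$, using the pointwise continuity established above.
\end{itemize}
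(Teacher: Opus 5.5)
Your argument is correct and has the same architecture as the paper's proof. Both regularize with boundary value $\varepsilon$ and initial value $v_0+\varepsilon$, trap $v_\varepsilon$ between separable barriers, and pass to the limit using interior Krylov--Safonov and Schauder estimates. Both then recover continuity at $t=0$ locally in the interior and near $\partial\Omega$ from the barriers, and obtain $v^p\in C([0,T];L^1(\Omega))$ by dominated convergence with $d^p$.

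The real difference is the choice of barriers. The paper uses only the first eigenfunction: the stationary supersolution $\varepsilon+C\phi$ and the subsolution $c\phi(1+t)^{\frac1{p-1}}$ with $c\le((1-p)\lambda_1)^{-\frac1{1-p}}$. This is self-contained and needs only $\phi\asymp d$. You instead use the friendly giant profile, $a(1+t)^{\frac1{p-1}}S\le v_\varepsilon\le A(1+t)^{\frac1{p-1}}S+\varepsilon$. That imports the existence of $S$ and $S\asymp d$ as external input. In return it gives directly the sharp, $T$-uniform two-sided bound \eqref{eq:barrier estimates}, which the paper obtains only later in Theorem~\ref{thm: v controlled by S} by comparing with $(t+t_i)^{\frac1{p-1}}S$. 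The paper's upper barrier carries no time decay.

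Your sign bookkeeping for $\bar v+\varepsilon$ and $\underline v$ is right. Since $\Delta\bar v\le0$ and $\Delta\underline v\le0$, the comparison with $v_\varepsilon$ goes through even though $\Delta S=-\frac{1}{1-p}S^p$ is unbounded near $\partial\Omega$.

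One computational slip: $\partial_t(\bar v^p)$ carries the factor $(t+\tau)^{\frac{p}{p-1}-1}=(t+\tau)^{\frac1{p-1}}$, not $(t+\tau)^{\frac1{p-1}-1}$. With the correct exponent the time powers do match, and your conclusion $A^p\le A$ stands.
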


\begin{proof}
We first rewrite \eqref{eq:equation for v} as
\begin{equation}\label{eq:expand equation for v}
\begin{cases}
v_t-v^{1-p}\Delta v=0, & \text{in } \Omega\times(0,T],\\
v=0, & \text{on } \partial\Omega\times(0,T],\\
v(\cdot,0)=v_0, & \text{on } \ol\Omega,
\end{cases}
\end{equation}
by decreasing $T$ a little bit if needed. The assumptions on $v_0$ and its continuity imply that
$v_0=0$ on $\partial\Omega$. For each $\varepsilon\in(0,1)$, consider
the regularized problem
\begin{equation}\label{eq:positive boundary for v}
\begin{cases}
v_t-v^{1-p}\Delta v=0, & \text{in } \Omega\times(0,T],\\
v=\varepsilon, & \text{on } \partial\Omega\times(0,T],\\
v(\cdot,0)=v_0+\varepsilon, & \text{on } \ol\Omega.
\end{cases}
\end{equation}
By the standard theory for uniformly parabolic quasilinear equations
\cite{ladyzenskaja1967linear}, using an approximation of the initial
data if necessary, \eqref{eq:positive boundary for v} admits a unique
classical solution
\[
v_\varepsilon\in C_{x,t}^{2,1}(\Omega\times(0,T])
\cap C(\ol\om\times[0,T]).
\]
Moreover, the maximum principle gives
\[
\varepsilon\leq v_\varepsilon(x,t)
\leq\|v_0\|_{L^\infty(\Omega)}+\varepsilon
\qquad\text{in }\ol\Omega\times[0,T].
\]

We next derive estimates independent of $\varepsilon$. Let $\lambda_1$
be the principal Dirichlet eigenvalue of $-\Delta$ in $\om$, and let
$\phi$ be a corresponding positive eigenfunction normalized by
$\|\phi\|_{L^\infty(\om)}=1$:
\[
-\Delta\phi=\lambda_1\phi \quad\text{in }\om,
\qquad
\phi=0 \quad\text{on }\pa\om,
\qquad
\phi>0 \quad\text{in }\om.
\]
By Hopf's lemma, there exist constants $c_\phi,C_\phi>0$ such that
\[
c_\phi d(x)\leq\phi(x)\leq C_\phi d(x)
\qquad\text{in }\om.
\]

Set $C=C_0/c_\phi$ and
\[
w(x)=\varepsilon+C\phi(x).
\]
Then
\[
w_t-w^{1-p}\Delta w
=
\lambda_1(\varepsilon+C\phi)^{1-p}C\phi
\geq0.
\]
Moreover, $w=\varepsilon=v_\varepsilon$ on the lateral boundary and
\[
w(x)\geq v_0(x)+\varepsilon
\qquad\text{at }t=0.
\]
The comparison principle therefore yields
\begin{equation}\label{eq:upper barrier}
v_\varepsilon(x,t)\leq\varepsilon+C\phi(x)
\quad\text{in }\Omega\times(0,T].
\end{equation}

For the lower bound, set
\[
c=
\min\left\{
\frac{c_0}{C_\phi},
\bigl((1-p)\lambda_1\bigr)^{-\frac{1}{1-p}}
\right\}
\]
and define
\[
z(x,t)=c\phi(x)(1+t)^{\frac{1}{p-1}}.
\]
A direct computation gives
\[
\begin{aligned}
z_t-z^{1-p}\Delta z
&=(1+t)^{\frac{2-p}{p-1}}
\left[
-\frac{c\phi}{1-p}
+\lambda_1(c\phi)^{2-p}
\right]
\leq0.
\end{aligned}
\]
Thus, $z$ is a subsolution of
\eqref{eq:positive boundary for v}. Since
\[
z=0\leq\varepsilon=v_\varepsilon
\qquad\text{on }\pa\om\times(0,T]
\]
and
\[
z(x,0)=c\phi(x)
\leq v_0(x)
\leq v_0(x)+\varepsilon,
\]
the comparison principle gives
\be\label{eq:low barrier}
v_\varepsilon(x,t)\geq
c\phi(x)(1+t)^{\frac{1}{p-1}}
\quad\text{in }\om\times(0,T].
\ee

We now pass to the limit as $\varepsilon\to0$. Let
\[
K\Subset K'\Subset\Omega\times(0,T].
\]
By \eqref{eq:upper barrier} and \eqref{eq:low barrier}, there exist
constants $0<m_K\leq M_K<\infty$, independent of $\varepsilon$, such
that
\[
m_K\leq v_\varepsilon\leq M_K
\qquad\text{in }K'.
\]
Hence the equations are uniformly parabolic on $K'$. The interior
H\"older estimates, followed by the parabolic Schauder estimates, give
uniform $C_{x,t}^{2+\gamma,1+\gamma/2}$ estimates for
$v_\varepsilon$ on $K$. Therefore, by the Arzel\`a--Ascoli theorem and
a diagonal argument, there exist a sequence $\varepsilon_j\to0$ and a
function $v$ such that
\[
v_{\varepsilon_j}\to v,\qquad
(v_{\varepsilon_j})_t\to v_t,\qquad
D^2v_{\varepsilon_j}\to D^2v
\]
locally uniformly in $\Omega\times(0,T]$. Passing to the limit gives
\[
v_t-v^{1-p}\Delta v=0
\qquad\text{in }\Omega\times(0,T].
\]

It remains to identify the initial and boundary values. Let
$\Omega'\Subset\Omega''\Subset\Omega$. The barrier estimates imply
that the equations for $v_\varepsilon$ are uniformly parabolic in a
neighborhood of $\Omega'\times\{0\}$, with ellipticity constants
independent of $\varepsilon$. Since the family
$\{v_0+\varepsilon\}_{0<\varepsilon<1}$ has a common modulus of
continuity, the standard local initial-time continuity estimate gives
\[
\lim_{t\to0}
\sup_{0<\varepsilon<1}
\left\|
v_\varepsilon(\cdot,t)-(v_0+\varepsilon)
\right\|_{L^\infty(\Omega')}
=0.
\]
Passing to the limit along $\varepsilon_j\to0$, we obtain
\[
v(\cdot,t)\to v_0
\quad\text{locally uniformly in }\Omega
\quad\text{as }t\to0.
\]
We therefore define $v(\cdot,0)=v_0$.

Letting $\varepsilon_j\to0$ in \eqref{eq:upper barrier} and
\eqref{eq:low barrier}, we obtain
\[
c\phi(x)(1+t)^{\frac{1}{p-1}}
\leq v(x,t)\leq C\phi(x)
\qquad\text{in }\Omega\times(0,T].
\]
Together with the corresponding bounds for $v_0$, this yields
\eqref{eq:comparable to distance function}, with constants having the
claimed dependence. It also follows that $v$ extends continuously to
the lateral boundary with value zero. Furthermore, the boundary
bounds and the local uniform convergence to $v_0$ imply
\[
v(\cdot,t)\to v_0
\quad\text{uniformly in }\ol\Omega
\quad\text{as }t\to0.
\]
Consequently,
\[
v\in C_{x,t}^{2,1}(\Omega\times(0,T])
\cap C(\overline{\Omega}\times[0,T]).
\]

Finally, we verify that
\[
v^p\in C([0,T];L^1(\Omega)).
\]
Let $t_j\to t_*\in[0,T]$. By the continuity and positivity of $v$,
\[
v(x,t_j)^p\to v(x,t_*)^p
\qquad\text{for every }x\in\Omega.
\]
Since $p<0$ and $v\geq c_Td$, we have
\[
0<v(x,t)^p\leq c_T^p d(x)^p
\qquad\text{in }\Omega\times[0,T].
\]
Because $p>-1$, the function $d^p$ belongs to $L^1(\Omega)$. The
dominated convergence theorem therefore gives
\[
\|v(\cdot,t_j)^p-v(\cdot,t_*)^p\|_{L^1(\Omega)}
\to0.
\]
Thus $v^p\in C([0,T];L^1(\Omega))$, and $v$ is an admissible solution
on $\Omega\times[0,T]$. Its uniqueness follows from
Theorem~\ref{thm:comparison}.
\end{proof}

\bigskip

\section{Gradient H\"older regularity for a degenerate equation}
\label{sec:C1alpha}

To establish the optimal regularity for the nonlinear equation, we
first derive boundary $C^{1,\gamma}$ estimates for the corresponding
linearized equations. Let $\bar v$ be a solution of the nonlinear
equation. By \eqref{eq:comparable to distance function},
\[
c d(x)\leq \bar v(x,t)\leq C d(x),
\]
and hence
\[
\bar v^{1-p}
=
d^{1-p}\left(\frac{\bar v}{d}\right)^{1-p}
\asymp d^{1-p}.
\]
Thus, after localizing and flattening the boundary, the principal part
of the linearized operator is reduced to
\[
\partial_t-x_n^{1-p}\bigl(a^{ij}D_{ij}+b^iD_i\bigr).
\]

The two-sided boundary estimate above is used only for the nonlinear
solution to identify the degeneracy of the linearized operator. We do
not impose the two-sided boundary growth condition on
solutions of the linear equation. Instead, linear boundary growth and
the corresponding $C^{1,\gamma}$ regularity will be derived from the
equation and the boundary data.

We therefore consider classical solutions $v\in C_{x,t}^{2,1}(Q_1^+)\cap C(\overline{Q}_1^+)$
of
\begin{equation}\label{original equation}
\begin{cases}
v_t-x_n^{1-p}(a^{ij}D_{ij}v+b^iD_i v)=x_n^q f
&\qquad \text{in } Q_1^+,\\
v=g
&\qquad \text{on } \partial_p Q_1^+,
\end{cases}
\end{equation}
where $p\in(-1,0)$, $q\in(-p,\infty)$, $f$ is bounded and continuous,
and $g$ is continuous and satisfies
\[
g=0 \quad \text{on} \quad
\partial_p Q_1^+\cap\{x_n=0\}.
\]
We assume that
\begin{equation}\label{eq:structure_conditions on Q1}
|a^{ij}|+|b^i|\leq \lambda^{-1},
\qquad
a^{ij}\xi_i\xi_j\geq \lambda|\xi|^2
\quad
\text{for all } \xi\in\mathbb R^n
\text{ and } (x,t)\in Q_1^+
\end{equation}
for some constant $0<\lambda<1$.

\begin{lem}[\cite{JTXZ}, Theorem 3.3]\label{lem:lipschitz estimate}
Let $p\in(-1,0)$, $q\in(-p,\infty)$, and
$f\in L^\infty(Q_1^+)$. Suppose that $v$ is a classical solution of
\eqref{original equation} satisfying $v=0$ on $\pa_p Q_1^+\cap\{x_n=0\}.$
Then
\[
|v(x,t)|
\leq
C\left(
\|g\|_{L^\infty(\pa_p Q_1^+)}
+\|f\|_{L^\infty(Q_1^+)}
\right)x_n
\qquad\text{for all }(x,t)\in\ol{Q}_{1/2}^+,
\]
where $C>0$ depends only on $n$, $p$, $q$, and $\lda$.
\end{lem}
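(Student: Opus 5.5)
The plan is to use barrier functions and the classical maximum principle in $Q_1^+$. By linearity and by replacing $v$ with $v/(\|g\|_{L^\infty}+\|f\|_{L^\infty})$, we may assume $\|g\|_{L^\infty(\pa_p Q_1^+)}+\|f\|_{L^\infty(Q_1^+)}\le 1$. It then suffices to build a supersolution $\Phi\ge 0$ on a localized cylinder such that:
\begin{itemize}
\item $\Phi\ge |g|$ on the parabolic boundary;
\item $\Phi_t-x_n^{1-p}(a^{ij}D_{ij}\Phi+b^iD_i\Phi)\ge x_n^q$;
\item $\Phi\le Cx_n$ on $\ol Q_{1/2}^+$.
\end{itemize}
The estimate for $-v$ is identical.

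\textbf{The profile.} For the source term, use $\psi(x_n)=x_n-x_n^{1+\beta}$ with $0<\beta<\min(q+p,1)$, which is possible since $q>-p$. Then
\[
-x_n^{1-p}\psi''=\beta(1+\beta)x_n^{\beta-p}\ge c\,x_n^{q}
\]
on $\{0<x_n<1\}$, because $q>\beta-p$. Combined with $a^{nn}\ge\lda$, the $a^{nn}$ term is good. The drift $b^n\psi'$ is bounded, but it carries the weight $x_n^{1-p}$. The comparison needed is
\[
x_n^{1-p}\ll x_n^{\beta-p}\iff \beta<1,
\]
so for small $x_n$ (say $x_n<\rho$) the drift is absorbed. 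Working in a thin slab is legitimate: away from $\{x_n=0\}$, at $x_n\ge\rho$, the equation is uniformly parabolic, and $|v|\le \sup|g|+C\|f\|\,t$ follows from the plain maximum principle with the barrier $\sup|g|+Ct$. This gives a bound on $\{x_n=\rho\}$ to use as lateral data.

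\textbf{Tangential and temporal localization.} Add the correction
\[
K\bigl(|x'|^2\,x_n^{\theta}+(t+1)\,x_n\bigr)\quad\text{or}\quad K\,|x'|^2\psi(x_n)/\rho
\]
to handle the lateral boundary $|x'|\approx 1$ and the initial time $t=-1$. The cleanest choice is $\Phi=A\psi(x_n)+B\bigl(|x'|^2+(t+1)\bigr)x_n^{\theta}$ with $\theta\in(0,1)$. The time derivative gives $Bx_n^{\theta}$. The spatial terms contribute at most $CBx_n^{1-p}x_n^{\theta-2}$ (from $\pa_{nn}$), together with mixed terms. Here $\theta(\theta-1)<0$, so the $\pa_{nn}$ term has a favorable sign. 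The mixed and $x'$-derivative terms are of size
\[
CBx_n^{1-p+\theta-1}=CBx_n^{\theta-p},
\]
which is smaller than $x_n^{\theta}$ times a constant only if $-p>0$, and that holds. These remain bounded relative to the good terms for small $\rho$. On the boundary pieces $|x'|=1$ and $t=-1$, we have $\Phi\ge Bx_n^{\theta}$ there, but we need $\Phi\ge|g|$, and $g$ is only continuous with $g=0$ at $x_n=0$, not Lipschitz. This is the key issue.

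\textbf{Main obstacle.} The hard point is that $|g|$ on the lateral and initial parts near $x_n=0$ is not known to be $O(x_n)$. Two routes are available:
\begin{itemize}
\item Bound $|g|\le \sup|g|$ and use a barrier that equals $1$ at $|x'|=1$ or $t=-1$ for every $x_n>0$. One such barrier is $\bigl(|x'|^2+(t+1)\bigr)\cdot h(x_n)$ with $h$ a bounded supersolution profile such as $x_n^{\theta}$ scaled to be $\ge 1$ at $x_n=\rho$. This is insufficient at $x_n\to0$ along $|x'|=1$. A fix is a barrier of the form $\bigl(|x'|^2+t+1\bigr)^{N}$, which does not vanish at $x_n=0$, combined with the observation that $v$ on the bottom face is $0$.
\item The approach I expect to work is an iteration. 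First obtain a Hölder-type bound $|v|\le Cx_n^{\theta}$ on $Q_{3/4}^+$ using the barrier $x_n^{\theta}+M\bigl(|x'|^2+(1+t)\bigr)$. At the bottom face this barrier dominates $0$, and on the lateral and initial faces it dominates $\sup|g|$. Its supersolution property uses that the $x'$ and $t$ parts contribute $M(1-Cx_n^{1-p})>0$. However, this barrier is not small at $x_n=0$ inside.
\end{itemize}
So I would instead follow the standard approach of \cite{JTXZ}: first establish a decay (oscillation) lemma near the flat boundary by scaling. The equation is invariant under $x\mapsto rx$, $t\mapsto r^{1+p}t$, with $f$ scaling favorably because $q>-p$. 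This gives $\sup_{Q_r^+}|v|\le Cr^{\theta}$. Then run the profile barrier $A\psi(x_n)+$ a localized quadratic on each small cylinder, which upgrades the bound to $Cx_n$. The upgrade step, where the $x'$-localizing term must be controlled by $x_n^{\beta-p}$, is where I expect the real difficulty.
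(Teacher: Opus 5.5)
The paper does not prove this lemma; it is quoted from \cite{JTXZ}. So your argument has to stand on its own, and it stops exactly where the content is.

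\textbf{The gap.} The obstruction is the initial face $\{t=-1\}$, not the lateral one. There $g$ is merely continuous with $g=0$ at $x_n=0$. A barrier must therefore be $\ge\|g\|_{L^\infty}$ on $\{t=-1,\,x_n>0\}$ and still be $\le Cx_n$ for $t\ge-2^{-(p+1)}$, so it must \emph{decrease} in time near $\{x_n=0\}$. Both of your concrete barriers increase in time:
\begin{enumerate}
\item $A\psi+B(|x'|^2+t+1)x_n^\theta$ reduces to $A\psi(x_n)\approx Ax_n$ at $t=-1$, $x'=0$, so it does not dominate $g$ there. It is also $\ge B(t+1)x_n^\theta$ later, so it cannot give a linear bound.
\item $x_n^\theta+M(|x'|^2+1+t)$ equals $x_n^\theta$ at $t=-1$, $x'=0$. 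Contrary to your claim, it does not dominate $\sup|g|$ on the initial face, and it is $\ge M(1+t)$ on $\{x_n=0\}$.
\end{enumerate}
The obvious repair $A\psi(x_n)+M\chi(t)$ with a decreasing cutoff $\chi$ also fails. The term $M\chi'$ is of order $M$ for all small $x_n$, while the only good term, $\lambda A\beta(1+\beta)x_n^{\beta-p}$, vanishes as $x_n\to0$.

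Your fallback does not close the gap either. The decay estimate $\sup_{Q_r^+}|v|\le Cr^\theta$ is borrowed from \cite{JTXZ}, whose Theorem 3.3 is the very statement at hand, and proving it needs a time-decreasing barrier of the same kind. In the upgrade step, the data on the bottom of each small cylinder are only $O(x_n^\theta)$. A cutoff term $Bx_n^\theta\chi(t)$ then creates $-B|\chi'|x_n^\theta$, which $Ax_n^{\beta-p}$ absorbs only when $\theta\ge\beta-p$; you leave this unresolved.

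By contrast, the lateral localization you single out as the main difficulty is harmless. Take $M\phi(x')$ with $\phi=0$ on $\{|x'|\le\frac12\}$ and $\phi\ge1$ near $\partial B_1$. It costs only $O(Mx_n^{1-p})$, which $Ax_n^{\beta-p}$ absorbs in a thin slab $\{x_n<\rho\}$, and it need not vanish on the flat face outside $\{|x'|\le\frac12\}$.

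\textbf{The missing idea.} You need a time-decreasing barrier adapted to the intrinsic scaling $x_n\sim(t+1)^{1/(1+p)}$; it exists because $1-p<2$. Normalize $\|g\|_{L^\infty}+\|f\|_{L^\infty}\le1$, and set $\xi:=x_n(t+1)^{-1/(1+p)}$ and $F(\xi):=\int_0^\xi e^{-Ks^{1+p}}\,ds$. Since $F''=-K(1+p)\xi^pF'$ and $x_n^{1-p}(t+1)^{-2/(1+p)}=\xi^{1-p}(t+1)^{-1}$, you get
\[
\partial_tF(\xi)-x_n^{1-p}a^{nn}D_{nn}F(\xi)\ge\Bigl(\lambda K(1+p)-\frac1{1+p}\Bigr)\frac{\xi F'(\xi)}{t+1}.
\]
For $K=2\lambda^{-1}(1+p)^{-2}$, the drift term $|x_n^{1-p}b^nD_nF(\xi)|$ is at most $\lambda^{-1}(1+p)(t+1)x_n^{-p}$ times the right-hand side, hence at most half of it if $\rho$ is small. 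Consequently:
\begin{enumerate}
\item $F(\xi)$ is a supersolution in the slab;
\item $F(\xi)\uparrow F(\infty)<\infty$ as $t\downarrow-1$ for every $x_n>0$;
\item $F(\xi)\le\xi\le Cx_n$ for $t\ge-2^{-(p+1)}$.
\end{enumerate}
Now compare $v$ with $A\psi(x_n)+2\phi(x')+2F(\xi)/F(\infty)$ in $\{0<x_n<\rho\}\cap Q_1^+\cap\{t\ge-1+\varepsilon\}$. Take $A$ large, in particular $A\ge2/\rho$, so the barrier exceeds $|v|\le1$ on $\{x_n=\rho\}$. Using the continuity of $v$ at $\{x_n=0,\,t=-1\}$ and letting $\varepsilon\to0$ gives the lemma.

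Alternatively, your second route can be repaired. Replace $M(1+t)$ by $M\chi(t)$ with $\chi$ decreasing, and take $\theta\le1+p$, $\theta<1$. The concavity term $\lambda\theta(1-\theta)x_n^{\theta-1-p}\ge\lambda\theta(1-\theta)$ then absorbs $M|\chi'|$, giving $|v|\le Cx_n^\theta$. Next use barriers $Ax_n^{\theta_{k+1}}+Bx_n^{\theta_k}\chi(t)$ with $\theta_{k+1}\le\theta_k+1+p$ and $\theta_{k+1}<1$. These raise the exponent above $\beta-p$ in finitely many steps (take $\beta<\min\{1+p,p+q\}$), after which $A\psi$ finishes.
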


\begin{lem}[Hopf Principle]\label{lem:hopf principle}
Let $p\in(-1,0)$, and suppose that $v$ is a nonnegative classical
solution of
\[
\begin{cases}
v_t-x_n^{1-p}\bigl(a^{ij}D_{ij}v+b^iD_i v\bigr)=0
&\text{in }Q_1^+,\\
v=0
&\text{on }\pa_p Q_1^+\cap\{x_n=0\}.
\end{cases}
\]
Then
\be\label{hopf principle}
v(x,t)
\geq
c\,v\left(
\frac{e_n}{2},
-\frac{1}{2}-\frac{1}{2^{p+2}}
\right)x_n
\qquad\text{for all }(x,t)\in\ol{Q}_{1/2}^+,
\ee
where $c\in(0,1)$ depends only on $n$, $p$, and $\lda$.
\end{lem}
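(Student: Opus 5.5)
Throughout, $v$ is a nonnegative solution of the homogeneous degenerate equation in $Q_1^+$ vanishing on the flat part. Set $M:=v(e_n/2,\,t^*)$ with $t^*:=-\frac12-\frac1{2^{p+2}}$. The plan has three steps: use the interior Harnack inequality to spread positivity from this point to a strip, then compare with an explicit barrier that vanishes linearly on $\{x_n=0\}$. The time level $t^*$ lies strictly below the bottom $-(1/2)^{p+1}$ of $Q_{1/2}$, since $-\frac12-\frac1{2^{p+2}}<-\frac12<-2^{-(p+1)}$ for $p\in(-1,0)$. This gap is what we use to wait.

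\emph{Step 1 (interior positivity).} On $\{x_n\ge \rho\}$, for a fixed small $\rho$ (say $\rho=1/8$), the operator $\partial_t-x_n^{1-p}(a^{ij}D_{ij}+b^iD_i)$ is uniformly parabolic. Its ellipticity constants depend only on $\lambda,p,\rho$. The Krylov--Safonov Harnack inequality for nonnegative solutions of nondivergence equations then gives a forward-in-time estimate. Chaining finitely many cylinders from $(e_n/2,t^*)$ forward in time (allowed since we go forward) and horizontally, we obtain
\[
v\ge c_1M \quad\text{on } \{|x'|\le 3/4,\ \rho\le x_n\le 3/4\}\times[t_1,0],
\]
where $t_1$ satisfies $t^*<t_1<-(1/2)^{p+1}$ and $c_1=c_1(n,p,\lambda)$. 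The number of Harnack steps is bounded in terms of $n$ and $p$ alone, so $c_1$ is universal.

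\emph{Step 2 (barrier).} In the region $D=\{|x'|<3/4,\ 0<x_n<\rho\}\times(t_1,0]$, construct a subsolution $\psi$ with the following properties:
\[
\psi\le 0 \text{ on } \{|x'|=3/4\}\cup\{t=t_1\},\qquad \psi\le c_1M \text{ on } \{x_n=\rho\},\qquad \psi=0 \text{ on } \{x_n=0\},
\]
and $\psi\ge cMx_n$ on $\overline{Q}_{1/2}^+$. A natural candidate is
\[
\psi=M\bigl(x_n-Kx_n^{1+\beta}\bigr)\eta(x',t), \qquad \eta=\bigl((t-t_1)/\tau\bigr)_+^{\,k}\bigl(9/16-|x'|^2\bigr)_+^{\,2},
\]
with $\beta\in(0,1)$ (e.g.\ $\beta=(1+p)/2$ or any $\beta<1$), and $K$, $k$ and the overall small constant to be fixed.

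\emph{Main obstacle: verifying that $\psi$ is a subsolution.} The term $x_n^{1-p}a^{nn}D_{nn}(-Kx_n^{1+\beta})=-K\beta(1+\beta)a^{nn}x_n^{\beta-p}$ supplies a negative contribution. Its size is comparable to $x_n^{\beta-p}$, which beats the error terms:
\begin{itemize}
\item the time derivative $\psi_t\sim x_n\eta_t$, where $x_n\ll x_n^{\beta-p}$ fails unless $\beta-p<1$; this holds for $\beta<1+p$;
\item the tangential and mixed derivative terms, which are $O(x_n^{1-p}\cdot x_n)$ or $O(x_n^{1-p})$, hence smaller than $x_n^{\beta-p}$ since $\beta<1$;
\item the drift terms $x_n^{1-p}b^iD_i\psi=O(x_n^{1-p})$, again dominated.
\end{itemize}
The time derivative is the delicate one, because $\eta_t$ can be large where $\eta$ is small. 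I would handle it in one of two ways. One option is to work instead with $\eta=e^{-A\cdot}$-type multipliers, i.e.\ use $\psi$ of the form $M(\dots)$ minus a linear-in-time correction. The other is to choose $\beta<1+p$, restrict to $x_n<\rho$ small, and take $k$ large, so that $\eta_t\le C\eta^{1-1/k}$ and an absorbing term $\epsilon x_n^{\beta-p}\eta^{1-1/k}$ can be arranged. After multiplying by a small constant, the boundary inequalities hold by Step 1. Once the subsolution property is established, the classical comparison principle in $D$, where both $v$ and $\psi$ are continuous up to the boundary, gives $v\ge\psi\ge cMx_n$ on $\overline{Q}_{1/2}^+\cap\{x_n<\rho\}$.

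\emph{Step 3 (conclusion).} For $x_n\ge\rho$, Step 1 already gives $v\ge c_1M\ge c_1Mx_n$. Combining this with Step 2 yields \eqref{hopf principle} with $c=c(n,p,\lambda)$.
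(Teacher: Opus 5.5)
There is a genuine gap in Step 2: the barrier you propose is not a subsolution, because the correction term has the wrong convexity. Write $L\psi:=\psi_t-x_n^{1-p}(a^{ij}D_{ij}\psi+b^iD_i\psi)$. A subsolution needs $L\psi\le0$, so the second-order term must be positive enough to pay for $\psi_t$.

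For $\psi=M(x_n-Kx_n^{1+\beta})\eta$ you get $-x_n^{1-p}a^{nn}D_{nn}\psi=+MK\beta(1+\beta)a^{nn}x_n^{\beta-p}\eta\ge0$. This has the same sign as $\psi_t=M(x_n-Kx_n^{1+\beta})\eta_t\ge0$. Since it is the dominant term for small $x_n$, you actually get $L\psi>0$ wherever $\eta>0$ near $\{x_n=0\}$. A concave profile gives a supersolution, already for the heat equation. So your ``main obstacle'' analysis rests on a sign error.

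Flipping to $+Kx_n^{1+\beta}$ with $\beta<1+p$ fixes the order count, but the multiplicative cutoff still breaks the argument as written. The good term is proportional to $x_n^{\beta-p}\eta$. By contrast, $x_n\eta_t$ is only $O(\eta^{1-1/k})$, and the mixed terms $x_n^{1-p}a^{in}D_i\eta$ (no sign control) are only $O(\eta^{1/2})$. So for fixed small $x_n$ these errors cannot be absorbed as $t\downarrow t_1$ or $|x'|\uparrow 3/4$. Your suggested remedies (exponential multipliers, an ``absorbing term $\epsilon x_n^{\beta-p}\eta^{1-1/k}$'') are not actual constructions. For instance, putting $\eta^{1-1/k}$ on the correction produces $x_n^{1+\beta}\partial_t\eta^{1-1/k}\sim x_n^{1+\beta}\eta^{1-2/k}$, which loses a power of $\eta$ again.

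The paper avoids both problems with an additive barrier on the thin strip $\Sigma=Q_{1/2}^+\cap\{0<x_n<\varepsilon\}$:
\[
\Phi=m_1\bigl[Kx_n^{p+2}+\bigl(m_2(t+T_0)-\delta\bigr)x_n-m_3|x'|^2\bigr],\qquad K=\tfrac{1}{(p+2)(p+1)},\quad T_0=2^{-(p+1)}.
\]
The convex term is calibrated so that $x_n^{1-p}D_{nn}(Kx_n^{p+2})=x_n$, exactly the order of $\Phi_t=m_1m_2x_n$. With $m_2<\lambda/2$ this gives $L\Phi\le m_1x_n\bigl[m_2-\lambda+O(x_n^{-p})+O(x_n)\bigr]\le0$ uniformly for $x_n<\varepsilon$, with no cutoff degenerating anywhere. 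Negativity on the bottom and lateral parts of $\partial_p\Sigma$ comes from $-\delta x_n$, with $\delta=K\varepsilon^{p+1}+m_2T_0/2$, and from $-m_3|x'|^2$. Thus $\Phi$ can be positive only on $A=\{x_n=\varepsilon,\ |x'|\le 1/4,\ -T_0/2\le t\le0\}$, where interior Harnack gives $v\ge\tilde c$. Taking $m_1$ small and comparing yields $v(0',x_n,0)\ge\frac{m_1m_2T_0}{4}x_n$.

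Your Steps 1 and 3 are fine in spirit, with two small corrections. The chain $-\frac12<-2^{-(p+1)}$ is false for $p\in(-1,0)$; what is true is that $t^*=-\frac{1+2^{-(p+1)}}{2}$ is the midpoint of $-1$ and $-2^{-(p+1)}$, hence lies below $Q_{1/2}$. Also, the set $\{|x'|\le 3/4,\ \rho\le x_n\le 3/4\}$ in Step 1 leaves $B_1$, so the positivity region there should be restricted.
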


\begin{proof}
Set
\[
B^*:=B_{1/4}\left(\frac{e_n}{2}\right),
\qquad
I^+:=\left(-\frac{1}{2^{p+1}},0\right],
\qquad
T_0:=\frac{1}{2^{p+1}},
\qquad
T_1:=\frac{1+T_0}{2}.
\]
Since the equation is uniformly parabolic away from $\{x_n=0\}$,
the interior Harnack inequality yields a constant $c_0>0$, depending
only on $n$, $p$, and $\lambda$, such that
\[
v\left(\frac{e_n}{2},-T_1\right)
\leq
\frac{1}{c_0}\inf_{B^*\times I^+}v.
\]
By continuity,
\[
v(x,t)\geq
c_0\,v\left(\frac{e_n}{2},-T_1\right)
\qquad
\text{for all }(x,t)\in\overline{B^*}\times\overline{I^+}.
\]

If $v(e_n/2,-T_1)=0$, then \eqref{hopf principle} follows immediately
from $v\geq0$. We may therefore assume that
$v(e_n/2,-T_1)>0$ and, by replacing $v$ with
\[
\frac{v(x,t)}{v(e_n/2,-T_1)},
\]
normalize $v(e_n/2,-T_1)=1.$

To construct a barrier near the flat boundary, set
\[
K:=\frac{1}{(p+2)(p+1)}.
\]
Fix $m_3>0$, choose $m_2\in(0,\lambda/2)$, and then take
$\varepsilon>0$ sufficiently small that
\[
K\varepsilon^{p+1}\leq\frac14m_2T_0.
\]
With
\[
\delta:=K\varepsilon^{p+1}+\frac{m_2T_0}{2},
\]
define, for $m_1>0$ to be chosen,
\[
\Phi(x,t)
:=
m_1\left[
Kx_n^{p+2}
+\bigl(m_2(t+T_0)-\delta\bigr)x_n
-m_3|x'|^2
\right].
\]
We compare $\Phi$ with $v$ in
\[
\Sigma
:=
\left\{
(x,t)\in Q_{1/2}^+:\ 0<x_n<\varepsilon
\right\}.
\]

We first verify that $\Phi$ is a subsolution. A direct computation gives
\[
\Phi_t=m_1m_2x_n,
\qquad
D_i\Phi=-2m_1m_3x_i,
\qquad
D_{ii}\Phi=-2m_1m_3
\quad (i=1,\ldots,n-1),
\]
and
\[
D_n\Phi
=
m_1K(p+2)x_n^{p+1}
+m_1\bigl(m_2(t+T_0)-\delta\bigr),
\qquad
D_{nn}\Phi=m_1x_n^p.
\]
All mixed derivatives vanish. Therefore, using
\eqref{eq:structure_conditions on Q1}, we obtain
\[
\begin{aligned}
&\Phi_t
-x_n^{1-p}
\bigl(a^{ij}D_{ij}\Phi+b^iD_i\Phi\bigr)\\
&=m_1\Biggl[
m_2x_n-a^{nn}x_n
+2m_3\left(\sum_{i=1}^{n-1}a^{ii}\right)x_n^{1-p}\\
&\hspace{2.2cm}
-b^n\left(
K(p+2)x_n^2
+\bigl(m_2(t+T_0)-\delta\bigr)x_n^{1-p}
\right)
+2m_3x_n^{1-p}\sum_{i=1}^{n-1}b^ix_i
\Biggr]\\
&\leq
m_1x_n\left[
m_2-\lambda
+4m_3(n-1)\lambda^{-1}x_n^{-p}
+\frac{\lambda^{-1}}{p+1}x_n
+\lambda^{-1}m_2T_0x_n^{-p}
\right].
\end{aligned}
\]
Since $p<0$, we have $x_n^{-p}\to0$ as $x_n\to0$.
After decreasing $\varepsilon$ if necessary, depending only on
$m_2$, $m_3$, $p$, and $\lambda$, we obtain
\[
m_2-\lambda
+4m_3(n-1)\lambda^{-1}x_n^{-p}
+\frac{\lambda^{-1}}{p+1}x_n
+\lambda^{-1}m_2T_0x_n^{-p}
\leq0
\]
whenever $0<x_n<\varepsilon$. Hence
\[
\Phi_t
-x_n^{1-p}
\bigl(a^{ij}D_{ij}\Phi+b^iD_i\Phi\bigr)
\leq0
\qquad\text{in }\Sigma.
\]

It remains to verify the boundary inequality. Let
\[
A:=
\left\{
x_n=\varepsilon,\quad
-\frac{T_0}{2}\leq t\leq0,\quad
|x'|\leq\frac14
\right\}.
\]
By the definition of $\delta$, and after decreasing $\varepsilon$ once
more if necessary,
\[
\Phi\leq0\leq v
\qquad\text{on }\partial_p\Sigma\setminus A.
\]
On the other hand, $A$ lies a positive distance away from the
degenerate boundary. The interior Harnack inequality
gives a constant $\widetilde c>0$, depending only on $n$, $p$, and
$\lambda$, such that
\[
v\geq\widetilde c
\qquad\text{on }A.
\]
Moreover,
\[
\Phi\leq m_1\frac{m_2T_0}{2}
\qquad\text{on }A.
\]
Choosing $m_1>0$ sufficiently small, depending only on $n$, $p$, and
$\lambda$, we obtain $\Phi\leq v$ on $A.$
Therefore, $
\Phi\leq v$ on $\partial_p\Sigma.$
The comparison principle now gives
\[
\Phi\leq v
\qquad\text{in }\Sigma.
\]

In particular, since
\[
m_2T_0-\delta
=
\frac{m_2T_0}{2}-K\varepsilon^{p+1}
\geq\frac{m_2T_0}{4},
\]
we have
\[
\begin{aligned}
v(0',x_n,0)
&\geq\Phi(0',x_n,0)\\
&=
m_1\left[
\bigl(m_2T_0-\delta\bigr)x_n
+Kx_n^{p+2}
\right]\\
&\geq
\frac{m_1m_2T_0}{4}x_n
\end{aligned}
\]
for $0<x_n<\varepsilon$.

Applying the same barrier argument after translations in the $x'$-
and $t$-directions gives the corresponding estimate throughout the
boundary strip $\{0<x_n<\varepsilon\}\cap\ol Q_{1/2}^+$. In the region
$x_n\geq\varepsilon$, the desired estimate follows from the interior
Harnack inequality. Restoring the original normalization, we conclude
that
\[
v(x,t)
\geq
c\,v\left(
\frac{e_n}{2},
-\frac{1}{2}-\frac{1}{2^{p+2}}
\right)x_n
\qquad\text{for all }(x,t)\in\ol Q_{1/2}^+,
\]
where $c\in(0,1)$ depends only on $n$, $p$, and $\lambda$. This proves
\eqref{hopf principle}.
\end{proof}

\begin{cor}
\label{cor:hopf principle inhomogeneous}
Let $p\in(-1,0)$, $q\in(-p,\infty)$, and
$f\in L^\infty(Q_1^+)$. Suppose that $v$ is a nonnegative classical
solution of
\[
\begin{cases}
v_t-x_n^{1-p}a^{ij}D_{ij}v=x_n^q f
&\qquad \text{in } Q_1^+,\\
v=0
&\qquad \text{on } \partial_p Q_1^+\cap\{x_n=0\}.
\end{cases}
\]
Then
\[
v(x,t)
\geq
\left[
c\,v\left(
\frac{e_n}{2},
-\frac{1}{2}-\frac{1}{2^{p+2}}
\right)
-C\|f\|_{L^\infty(Q_1^+)}
\right]x_n
\]
for every $(x,t)\in\overline Q_{1/2}^+$, where
$c=c(n,p,\lambda)\in(0,1)$ is the constant in
Lemma~\ref{lem:hopf principle}, and
$C=C(n,p,q,\lambda)>0$.
\end{cor}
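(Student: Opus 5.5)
The plan is to split $v$ into a part carrying the source and a part solving the homogeneous equation, treat the first with Lemma~\ref{lem:lipschitz estimate} and the second with Lemma~\ref{lem:hopf principle}. Let $w$ be the classical solution of
\[
w_t-x_n^{1-p}a^{ij}D_{ij}w=x_n^q f \quad\text{in }Q_1^+,\qquad w=0\quad\text{on }\pa_pQ_1^+ .
\]
If solvability of this degenerate problem with the given coefficients is not directly available, I would solve it instead on a slightly smaller cylinder, or on the truncated domains $\{x_n>\varepsilon\}$ with zero data and pass to the limit. The limit is controlled by the barrier $C\|f\|_{L^\infty}\,x_n$-type bounds coming from Lemma~\ref{lem:lipschitz estimate}, which are uniform in $\varepsilon$. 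With $g=0$, Lemma~\ref{lem:lipschitz estimate} gives
\[
|w|\le C\|f\|_{L^\infty(Q_1^+)}\,x_n \quad\text{in }\ol Q_{1/2}^+ ,
\]
and the maximum principle with a global barrier gives $|w|\le C\|f\|_{L^\infty}$ in all of $Q_1^+$.

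Next set $h:=v-w$. It solves the homogeneous equation, vanishes on $\{x_n=0\}$, and is continuous. However, $h$ need not be nonnegative, so Lemma~\ref{lem:hopf principle} cannot be applied to it directly. To handle this, I would instead consider $\tilde h:=v+W$, where $W\ge0$ is a supersolution majorant of $|w|$. Concretely, I would take $W$ to be the solution of the same problem with source $x_n^q\|f\|_{L^\infty}$ and zero data. By comparison, $W\ge |w|$ and $W\ge 0$, and Lemma~\ref{lem:lipschitz estimate} gives $W\le C\|f\|_{L^\infty}x_n$ on $\ol Q_{1/2}^+$ together with $W\le C\|f\|_{L^\infty}$ globally.

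Then $h':=v-w+W\ge v\ge 0$ solves the homogeneous equation with zero lateral data on $\{x_n=0\}$. Applying Lemma~\ref{lem:hopf principle} yields
\[
v-w+W\ge c\,(v-w+W)(P)\,x_n\ge c\,v(P)\,x_n ,
\]
where $P=(e_n/2,-\tfrac12-2^{-p-2})$ and the second inequality uses $W-w\ge0$. Hence, on $\ol Q_{1/2}^+$,
\[
v\ge c\,v(P)\,x_n-|w|-W\ge \bigl[c\,v(P)-2C\|f\|_{L^\infty}\bigr]x_n ,
\]
which is the claimed bound after renaming $C$.

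The main obstacle is the solvability and regularity of the auxiliary problems for $w$ and $W$ up to the degenerate boundary, needed to make them admissible competitors in both lemmas. I would handle this by the $\varepsilon$-truncation described above, relying only on interior uniform parabolicity and the uniform-in-$\varepsilon$ linear barrier. An alternative that avoids existence altogether is to build $W$ explicitly. One could modify the barrier $\Phi$ from the proof of Lemma~\ref{lem:hopf principle} by subtracting $M\|f\|_{L^\infty}\,x_n(1-x_n^{\theta})$ with $0<\theta<q+p$. This yields a strict subsolution gap of order $x_n^{q}$ that absorbs the source $x_n^q f$, since $q>-p$. Comparing directly with $v$ in $\Sigma$ would then give the same inequality.
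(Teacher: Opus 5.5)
Your decomposition matches the paper's, but the step where you apply Lemma~\ref{lem:hopf principle} to $h':=v-w+W$ fails. Since $v-w$ solves the homogeneous equation while $W$ carries the source $x_n^q\|f\|_{L^\infty(Q_1^+)}$, you have
\[
h'_t-x_n^{1-p}a^{ij}D_{ij}h'=x_n^q\|f\|_{L^\infty(Q_1^+)}\quad\text{in }Q_1^+ .
\]
So $h'$ is not a solution of the homogeneous equation, which is what Lemma~\ref{lem:hopf principle} requires. This is not a technicality. The lemma's proof begins with the interior Harnack inequality, which bounds the value at the earlier point $P$ by an infimum at later times. With a source term, that inequality holds only up to an additive error of order $\|f\|_{L^\infty}$. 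A Hopf bound for $h'$ with such an error is exactly the corollary you are proving, with $f$ replaced by the constant $\|f\|_{L^\infty}$, so the step is circular. Your fallback with an explicit modified barrier is plausible, but as sketched it would also need the Harnack inequality with a source, both on the set $A$ and in $\{x_n\geq\varepsilon\}$.

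The detour is also unnecessary, because your premise that $h=v-w$ need not be nonnegative is false. The function $h$ solves the homogeneous equation and equals $v\geq0$ on $\partial_pQ_1^+$, since $w=0$ there. The weak maximum principle is valid here because the degenerate set $\{x_n=0\}$ is part of the parabolic boundary, so it gives $h\geq0$ in $Q_1^+$. Lemma~\ref{lem:hopf principle} then applies to $h$ directly:
\[
h\geq c\,h(P)\,x_n \quad\text{on }\ol Q_{1/2}^+ .
\]
Your global bound $|w|\leq C\|f\|_{L^\infty(Q_1^+)}$ gives $h(P)\geq v(P)-C\|f\|_{L^\infty(Q_1^+)}$. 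You need the global bound here because $P\notin\ol Q_{1/2}^+$; the paper instead uses Lemma~\ref{lem:lipschitz estimate} on $Q_{3/4}^+$. Combining this with $|w|\leq C\|f\|_{L^\infty}x_n$ on $\ol Q_{1/2}^+$ gives the claim. This is precisely the paper's proof. Its auxiliary function is your $h$, namely the solution of the homogeneous problem with data $v$ on $\partial_pQ_1^+$. The paper takes its solvability for granted, just as your plan does, and no $W$ is needed.
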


\begin{proof}
Let
\[
P_0:=
\left(
\frac{e_n}{2},
-\frac{1}{2}-\frac{1}{2^{p+2}}
\right),
\]
and let $w$ solve
\[
\begin{cases}
w_t-x_n^{1-p}a^{ij}D_{ij}w=0
&\text{in }Q_1^+,\\
w=v
&\text{on }\pa_p Q_1^+.
\end{cases}
\]
Since $v\geq0$ on $\pa_pQ_1^+$, the maximum principle gives
$w\geq0$ in $Q_1^+$. Moreover, $w=0$ on
$\pa_pQ_1^+\cap\{x_n=0\}$.

The function $w-v$ satisfies
\[
(w-v)_t-x_n^{1-p}a^{ij}D_{ij}(w-v)
=-x_n^qf
\qquad\text{in }Q_1^+
\]
and vanishes on $\pa_pQ_1^+$. Hence, by
Lemma~\ref{lem:lipschitz estimate},
\[
|w-v|
\leq
C\|f\|_{L^\infty(Q_1^+)}x_n
\qquad\text{in }Q_{3/4}^+.
\]
Applying Lemma~\ref{lem:hopf principle} to $w$, we obtain
\[
w(x,t)\geq c\,w(P_0)x_n
\qquad\text{for all }(x,t)\in\ol Q_{1/2}^+.
\]
Since $P_0\in Q_{3/4}^+$ and its $x_n$-coordinate is $1/2$, the
preceding Lipschitz estimate gives
\[
w(P_0)
\geq
v(P_0)-C\|f\|_{L^\infty(Q_1^+)}.
\]
Therefore, for every $(x,t)\in\ol Q_{1/2}^+$,
\[
\begin{aligned}
v(x,t)
&\geq
w(x,t)-C\|f\|_{L^\infty(Q_1^+)}x_n\\
&\geq
c\,w(P_0)x_n
-C\|f\|_{L^\infty(Q_1^+)}x_n\\
&\geq
\left(
c\,v(P_0)
-C\|f\|_{L^\infty(Q_1^+)}
\right)x_n,
\end{aligned}
\]
where the constant $C>0$ has been enlarged in the last line. This
proves the desired estimate.
\end{proof}

\begin{thm}\label{thm:boundary 1 alpha regularity}
Let $p\in(-1,0)$, $q\in(-p,\infty)$, and
$f\in L^\infty(Q_1^+)$. Suppose that $v$ is a classical solution of
\[
\begin{cases}
v_t-x_n^{1-p}a^{ij}D_{ij}v=x_n^q f
&\qquad \text{in }Q_1^+,\\
v=0
&\qquad \text{on }\partial_p Q_1^+\cap\{x_n=0\}.
\end{cases}
\]
Then there exists $\gamma\in(0,p+q)$, depending only on
$n$, $p$, $q$, and $\lda$, with the following property: for every $(x_0,t_0)\in\partial_p Q_{1/2}^+\cap\{x_n=0\},$
there exists $a_{(x_0,t_0)}\in\R$ such that
\[
Dv(x_0,t_0)=a_{(x_0,t_0)}e_n\qquad \text{and}\qquad 
|a_{(x_0,t_0)}|
\leq
C\left(
\|v\|_{L^\infty(Q_1^+)}
+\|f\|_{L^\infty(Q_1^+)}
\right).
\]
Moreover, for every
$(x,t)\in Q_1^+\cap Q_{1/2}^+(x_0,t_0)$,
\[
\begin{aligned}
|v(x,t)-a_{(x_0,t_0)}x_n|
&\leq
C\left(
\|v\|_{L^\infty(Q_1^+)}
+\|f\|_{L^\infty(Q_1^+)}
\right)
\left(
|x-x_0|+|t-t_0|^{\frac1{p+1}}
\right)^{1+\gamma},
\end{aligned}
\]
where $C>0$ depends only on $n$, $p$, $q$, and $\lda$.
\end{thm}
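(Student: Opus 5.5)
We prove the statement by the standard Krylov-type iteration: in intrinsic cylinders $Q_r^+(x_0,t_0)$ we show that $v/x_n$ oscillates less and less. First note the scaling. If $v$ solves the equation in $Q_r^+(x_0,t_0)$, then $v_r(x,t):=r^{-1}v(x_0+rx,t_0+r^{p+1}t)$ solves
\[
\pa_tv_r-x_n^{1-p}a^{ij}_rD_{ij}v_r=x_n^q f_r,\qquad f_r(x,t)=r^{p+q}f(x_0+rx,t_0+r^{p+1}t).
\]
This uses that $x_n^{1-p}\cdot r^{-1}\cdot r^{p+1}=(rx_n)^{1-p}r^{-1}\cdot$(factor), which matches the cylinders $Q_R$ with time length $R^{p+1}$. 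Since $p+q>0$, the source becomes small at small scales, and this is the origin of the restriction $\gamma<p+q$. By translation invariance we may fix $(x_0,t_0)$ and assume it is the origin. Normalize so that $\|v\|_{L^\infty(Q_1^+)}+\|f\|_{L^\infty(Q_1^+)}\le1$. Lemma~\ref{lem:lipschitz estimate} (with $g=v$ on the parabolic boundary) then gives $|v|\le Cx_n$ in $Q_{1/2}^+$.

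\textbf{Key step (oscillation decay).} We claim there exist $\theta\in(0,1)$ and $\rho\in(0,1/2)$, depending only on $n,p,\lambda$, such that the following holds. If $m x_n\le v\le Mx_n$ in $Q_r^+$ and $\|f\|_{L^\infty}\le\epsilon_0$ (after rescaling), then there are $m\le m'\le M'\le M$ with $M'-m'\le\theta(M-m)+Cr^{p+q}$ and $m'x_n\le v\le M'x_n$ in $Q_{\rho r}^+$.

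To prove this we apply Corollary~\ref{cor:hopf principle inhomogeneous} to the two nonnegative functions $v-mx_n$ and $Mx_n-v$. Since $x_n$ is annihilated by the operator (no first-order terms), both solve the same equation with right-hand side $\pm x_n^qf$. At the point $P_0$ (rescaled), one of them is at least $\tfrac12(M-m)\cdot\tfrac12$, because their sum there equals $(M-m)/2$. The Corollary then improves the corresponding bound by $c(M-m)/4-C r^{p+q}$ in $Q^+_{r/2}$. Iterating gives
\[
M_k-m_k\le C\bigl(\theta^k+\rho^{k(p+q)}\bigr),
\]
which is $\le C\rho^{k\gamma}$ once we choose $\gamma<p+q$ with $\rho^\gamma\ge\theta$. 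The nested intervals $[m_k,M_k]$ shrink to a limit $a$ with $|a|\le C$, and we obtain
\[
|v-ax_n|\le C r^{\gamma}x_n\le Cr^{1+\gamma}\quad\text{in } Q_r^+.
\]
Here $r$ is comparable to $|x|+|t|^{1/(p+1)}$, so this is the claimed estimate on the boundary-adjacent region.

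\textbf{Remaining region and gradient.} Points with $x_n\ge c(|x'|+|t|^{1/(p+1)})$ are handled as follows. For $(x,t)$ with $x_n\sim r$, the function $w=v-ax_n$ solves a uniformly parabolic equation in a Whitney cylinder of spatial size $r/2$ and time size $\sim r^{1+\dots}$. After rescaling we can use interior estimates and the bound $|w|\le Cr^{1+\gamma}$; the estimate is already pointwise, so only the sup bound is needed for the statement. The identity $Dv(0,0)=ae_n$ follows from $|v-ax_n|=o(|x|)$, since $v$ vanishes on $\{x_n=0\}$ near the boundary point, giving zero tangential derivatives. Boundary points on the bottom $\{t=-1/4^{\dots}\}$ of $\pa_pQ^+_{1/2}$ are handled identically, because $Q_{1/2}^+(x_0,t_0)\cap Q_1^+$ still lies in the domain and only backward cylinders are used.

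The main obstacle is the oscillation-decay step. We need the Hopf lower bound of Corollary~\ref{cor:hopf principle inhomogeneous} to be applied consistently at all scales to both $v-mx_n$ and $Mx_n-v$; this requires them to be nonnegative on all of $Q_r^+$, not just near the boundary. The invariance of $x_n$ under the operator, and keeping track of the source term's scaling $r^{p+q}$, are what make this work.
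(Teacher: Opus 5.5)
Your proposal is correct and follows essentially the paper's proof: a Krylov-type iteration that starts from the Lipschitz bound of Lemma~\ref{lem:lipschitz estimate}, and at each dyadic scale applies Corollary~\ref{cor:hopf principle inhomogeneous} to whichever of $v-mx_n$ and $Mx_n-v$ is at least $(M-m)/4$ at the interior point $P_0$. The differences are only bookkeeping. The paper absorbs the source by dividing by $r_k^{1+\gamma}$, so that $\|f_k\|_{L^\infty(Q_1^+)}\le r_k^{p+q-\gamma}\le 1$, and takes $\mu$ large, whereas you carry an additive $Cr^{p+q}$ error through a linear recursion. Your ``remaining region'' step is unnecessary, since $m_kx_n\le v\le M_kx_n$ already holds on the whole half-cylinder $Q_{2^{-k}}^+$.
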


\begin{proof}
Without loss of generality, assume that $(x_0,t_0)=(0,0)$,
\[
\|v\|_{L^\infty(Q_1^+)}
+\|f\|_{L^\infty(Q_1^+)}
\leq1,
\]
and consider only the case $q<1-p$. Indeed, if $q\geq1-p$, fix
$\widetilde q\in(-p,1-p)$ and write
\[
x_n^qf=x_n^{\widetilde q}\widetilde f,
\qquad
\widetilde f:=x_n^{q-\widetilde q}f.
\]
Then
\[
\|\widetilde f\|_{L^\infty(Q_1^+)}
\leq
\|f\|_{L^\infty(Q_1^+)}
\leq1.
\]

We claim that there exist two sequences
$\{a_k\}_{k=0}^\infty$ and $\{b_k\}_{k=0}^\infty$, with
$\{a_k\}$ nondecreasing and $\{b_k\}$ nonincreasing, such that, for
every $k\geq1$,
\begin{equation}\label{eq:ak and bk}
\begin{cases}
a_kx_n\leq v(x,t)\leq b_kx_n
&\text{for all }(x,t)\in Q_{1/2^k}^+,\\[1mm]
0\leq b_k-a_k\leq\mu2^{-k\gamma},
\end{cases}
\end{equation}
where $\mu>0$ and $\gamma\in(0,1)$ are universal constants depending
only on $n,p,q$, and $\lda$.

Assume that the claim holds. Then there exists $a\in\mathbb R$, with
$|a|\leq C$, such that
\[
\lim_{k\to\infty}a_k
=a
=\lim_{k\to\infty}b_k,
\qquad
a_1\leq a_2\leq\cdots\leq a
\leq\cdots\leq b_2\leq b_1.
\]
Thus, for every $k\geq1$ and
$(x,t)\in Q_{1/2^k}^+\setminus Q_{1/2^{k+1}}^+$,
\[
\begin{aligned}
|v(x,t)-ax_n|
&\leq
(b_k-a_k)x_n\leq
\mu2^{-k\gamma}x_n \leq
C\left(
|x|+|t|^{\frac1{p+1}}
\right)^\gamma x_n.
\end{aligned}
\]
Scaling back yields the desired estimates.

It remains to prove the claim. Let $0<c<1$ be the constant in
Lemma~\ref{lem:hopf principle}. Choose $\gamma\in(0,1)$ sufficiently
small so that
\[
2^\gamma<1+\frac{c}{8}
\qquad\text{and}\qquad
p+q>\gamma.
\]
By Lemma~\ref{lem:lipschitz estimate}, there exists $C_0>0$ such that
\[
-C_0x_n\leq v(x,t)\leq C_0x_n
\qquad
\text{in }\ol Q_{1/2}^+.
\]
Set
\[
a_0=-2C_0,\qquad a_1=-C_0,\qquad
b_0=2C_0,\qquad b_1=C_0,
\]
and
\[
\mu
=
C_02^{\gamma+4}
+2^{\gamma+3}c^{-1}C_1,
\]
where $C_1=C(n,p,q,\lambda)$ is the constant appearing in
Corollary~\ref{cor:hopf principle inhomogeneous}. Then \eqref{eq:ak and bk} holds for $k=1$. 

Suppose that
\eqref{eq:ak and bk} holds for some $k\geq1$, and set
$r_k=2^{-k}$. If
\[
b_k-a_k\leq\mu r_{k+1}^\gamma,
\]
we take $a_{k+1}=a_k,$ and $b_{k+1}=b_k,$
and the induction step is complete. We may therefore assume that
\[
b_k-a_k>\mu r_{k+1}^\gamma.
\]
At the point
\[
P_k:=
\left(
\frac{r_k}{2}e_n,
-\left(\frac12+\frac1{2^{p+2}}\right)r_k^{p+1}
\right),
\]
one of the following alternatives holds:
\[
v(P_k)\geq\frac{r_k}{4}(a_k+b_k)
\qquad\text{or}\qquad
v(P_k)\leq\frac{r_k}{4}(a_k+b_k).
\]

Suppose first that
\[
v(P_k)\geq\frac{r_k}{4}(a_k+b_k).
\]
Define
\[
w(x,t):=
\frac{v(r_kx,r_k^{p+1}t)-a_kr_kx_n}
{r_k^{1+\gamma}},
\]
and
\[
a_k^{ij}(x,t)
:=
a^{ij}(r_kx,r_k^{p+1}t),
\qquad
f_k(x,t)
:=
r_k^{p+q-\gamma}f(r_kx,r_k^{p+1}t).
\]
Then $w$ satisfies
\[
\begin{cases}
w_t-x_n^{1-p}a_k^{ij}D_{ij}w=x_n^qf_k
&\text{in }Q_1^+,\\
w=0
&\text{on }\partial_pQ_1^+\cap\{x_n=0\},
\end{cases}
\]
and the induction hypothesis gives
\[
0\leq w\leq\mu
\qquad
\text{in }Q_1^+.
\]
Moreover, Corollary~\ref{cor:hopf principle inhomogeneous} and
$\|f_k\|_{L^\infty(Q_1^+)}\leq1$ yield
\begin{align*}
w(x,t)
&\geq
\left[
c\,w\left(
\frac{e_n}{2},
-\frac12-\frac1{2^{p+2}}
\right)
-C_1\|f_k\|_{L^\infty(Q_1^+)}
\right]x_n\\
&\geq
\left(
\frac{c(b_k-a_k)}{4r_k^\gamma}
-C_1
\right)x_n\\
&\geq
\left(
2^{-\gamma-2}c\mu-C_1
\right)x_n\\
&\geq
2^{-\gamma-3}c\mu x_n.
\end{align*}
Scaling back, we obtain
\[
v(x,t)\geq
\left(
a_k+2^{-\gamma-3}c\mu r_k^\gamma
\right)x_n
\qquad
\text{in }Q_{r_{k+1}}^+.
\]
Set
\[
a_{k+1}
:=
a_k+2^{-\gamma-3}c\mu r_k^\gamma,
\qquad
b_{k+1}:=b_k.
\]
Then
\[
b_{k+1}-a_{k+1}
\leq
\left(
1-2^{-\gamma-3}c
\right)\mu r_k^\gamma
\leq
\mu r_{k+1}^\gamma,
\]
where the last inequality follows from the choice of $\gamma$.

The second alternative is handled in the same way by applying the
above argument to
\[
\frac{
b_kr_kx_n-v(r_kx,r_k^{p+1}t)
}
{r_k^{1+\gamma}}.
\]
Therefore, \eqref{eq:ak and bk} holds for $k+1$, and the induction is
complete.
\end{proof}

\begin{thm}\label{thm:C1alpha of nonlinear equation in Q1}
Let $p\in(-1,0)$, $q\in(-p,\infty)$, and let
$\gamma\in(0,\min\{p+q,1\})$ be as in
Theorem~\ref{thm:boundary 1 alpha regularity}. Suppose that
\[
a^{ij},b^i\in C_{x,t}^{\gamma,\gamma/2}(Q_1^+)
\]
satisfy \eqref{eq:structure_conditions on Q1}, and let
$f\in L^\infty(Q_1^+)$. If $v$ is a classical solution of
\[
\begin{cases}
v_t-x_n^{1-p}\left(a^{ij}D_{ij}v+b^iD_iv\right)=x_n^qf
&\text{in }Q_1^+,\\
v=0
&\text{on }\pa_p Q_1^+\cap\{x_n=0\},
\end{cases}
\]
then $v\in C_{x,t}^{1+\gamma,\frac{1+\gamma}{2}}(Q_{1/2}^+)$
and
\[
\|v\|_{C_{x,t}^{1+\gamma,\frac{1+\gamma}{2}}(Q_{1/2}^+)}
\leq
C\left(
\|v\|_{L^\infty(Q_1^+)}
+
\|f\|_{L^\infty(Q_1^+)}
\right),
\]
where $C>0$ depends only on $n,p,q,\gamma,\lda$, and the
$C_{x,t}^{\gamma,\gamma/2}$ norms of $a^{ij}$ and $b^i$.
\end{thm}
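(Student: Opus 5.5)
The plan is to combine the pointwise boundary expansion of Theorem~\ref{thm:boundary 1 alpha regularity} with interior Schauder estimates in Whitney-type cylinders, and then glue the two. First, to put the equation in the form required by Theorem~\ref{thm:boundary 1 alpha regularity}, I will move the first-order term to the right-hand side:
\[
v_t-x_n^{1-p}a^{ij}D_{ij}v=x_n^q\widetilde f,\qquad
\widetilde f:=f+x_n^{1-p-q}b^iD_iv,
\]
after reducing, as in that proof, to $q<1-p$ so that $x_n^{1-p-q}$ is bounded. This requires a bound $|Dv|\le C(\|v\|_{L^\infty}+\|f\|_{L^\infty})$ in $Q_{3/4}^+$, which I will obtain from Lemma~\ref{lem:lipschitz estimate} (applied with the drift; it is stated for \eqref{original equation}) and interior rescaling. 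Concretely, Lemma~\ref{lem:lipschitz estimate} gives $|v|\le Cx_n$. At a point with $x_n=r$, I will rescale the cylinder $B_{r/2}(x)\times(t-c r^{p+1},t]$ to unit size via $x\mapsto x+ry$, $t\mapsto t+r^{p+1}s$. The rescaled equation is uniformly parabolic, with coefficients $(x_n/r)^{1-p}a^{ij}$ and drift $r(x_n/r)^{1-p}b^i$, and right-hand side $r^{p+q-1}\cdot r\cdots$. Interior Schauder estimates then give $r|Dv|+r^{1+\gamma}[Dv]_{C^\gamma}\lesssim\sup|v|+r^{1+p+q}\|f\|\lesssim r$. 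Hence $Dv$ is bounded and $\widetilde f\in L^\infty$ with the desired norm.

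With this in hand, Theorem~\ref{thm:boundary 1 alpha regularity}, applied at every boundary point in a slightly larger cylinder and rescaled, yields $a_{(x_0,t_0)}$ with $|v-a_{(x_0,t_0)}x_n|\le C\rho^{1+\gamma}$, where $\rho=|x-x_0|+|t-t_0|^{1/(p+1)}$. The main step is the interior–boundary gluing. For $(x,t)$ with $x_n=r$, let $(x_0,t_0)=(x',0,t)$, and apply the same Whitney rescaling to $w:=v-a_{(x_0,t_0)}x_n$. Note that $D_{ij}(a x_n)=0$, so $w$ satisfies the same equation except for a drift term $x_n^{1-p}b^n a$, which can be absorbed into the right-hand side. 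In the cylinder of size $r$ we have $\sup|w|\le Cr^{1+\gamma}$. The rescaled interior Schauder estimate, which uses the $C^{\gamma,\gamma/2}$ norms of the coefficients (the rescaled coefficients have even smaller seminorms), then gives
\[
|Dw(x,t)|\le Cr^\gamma,\qquad [Dw]_{C^{\gamma}_x}\le C
\]
at scale $r$. For the time direction, the intrinsic time scale $r^{p+1}$ gives a modulus $r^{-\gamma}\cdot r^{\gamma}$ in terms of $|t-s|^{\gamma/(p+1)}$. Since $p+1<2$, this dominates $|t-s|^{\gamma/2}$ on short intervals, so the weaker $C^{(1+\gamma)/2}$ time regularity follows; for the time regularity of $v$ itself I will use $|v(x,t)-v(x,s)|\le |a_{t}-a_{s}|x_n+C\rho^{1+\gamma}$. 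The remaining case analysis is standard: for two points, compare either inside one Whitney cylinder or, when they are farther apart than their distances to the boundary, through the boundary expansions. This also requires Hölder continuity of $(x_0,t_0)\mapsto a_{(x_0,t_0)}$, which I will derive by comparing two expansions at a point at height $\rho$ and dividing by $\rho$, giving $|a_{P}-a_{Q}|\le C\rho^{\gamma}$.

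The main obstacle I expect is the bookkeeping of exponents in the right-hand side and drift terms under Whitney rescaling. Specifically, the factor $r^{p+q}$ coming from $x_n^qf$ must be at least $r^{\gamma}$, which needs $\gamma<p+q$. Likewise, the drift contributes $r^{1-p}|b||Da|\sim r^{1-p}$, which after normalization must be $\lesssim r^{1+\gamma}$ relative to the unit scale. Note that $r^{1-p}\cdot r^{p+1}/r=r$ and $r\le r^{\gamma}$, so this works. I also need to check that the time Hölder exponent $(1+\gamma)/2$ is compatible with the intrinsic parabolic scaling $t\sim x^{p+1}$. If it is not, I would instead state the bound in the intrinsic metric first and then convert, using $p+1\le 1$.
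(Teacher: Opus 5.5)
Your proposal is correct and follows essentially the same route as the paper: a Lipschitz bound from Lemma~\ref{lem:lipschitz estimate} plus rescaling, absorbing the drift into the right-hand side (taking the reduced exponent in $(\gamma-p,\min\{q,1-p\})$, as the paper does, so that Theorem~\ref{thm:boundary 1 alpha regularity} still gives exponent $\gamma$), then the boundary expansions, Whitney-rescaled interior estimates for $v-a x_n$, H\"older continuity of the slopes obtained by comparing two expansions at height $\rho$, and the two-case gluing. The differences are minor. Near the boundary, the paper gets the time regularity of $v$ by integrating $D_nv(\cdot,t)-D_nv(\cdot,s)$ along the normal rather than by comparing expansions at two times; both work. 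Also, in a Whitney cylinder the rescaling really gives $|Dv(x,t)-Dv(x,s)|\le Cr^{\gamma(1-p)/2}|t-s|^{\gamma/2}$, not a $|t-s|^{\gamma/(p+1)}$ modulus, but this still yields the $C^{\gamma/2}_t$ bound you want.
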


\begin{proof}
By the same normalization as in
Theorem~\ref{thm:boundary 1 alpha regularity}, we may assume that
\[
\|v\|_{L^\infty(Q_1^+)}\leq1,
\qquad
\|f\|_{L^\infty(Q_1^+)}\leq1.
\]

We first show that $Dv$ is bounded. For $0<r<1/2$, define
\[
v_r(x,t):=v(rx,r^{p+1}t).
\]
Then $v_r$ satisfies
\[
(v_r)_t-x_n^{1-p}
\left(a_r^{ij}D_{ij}v_r+b_r^iD_iv_r\right)
=
x_n^qr^{p+1+q}f(rx,r^{p+1}t)
\quad\text{in }Q_2^+,
\]
where
\[
a_r^{ij}(x,t):=a^{ij}(rx,r^{p+1}t),
\qquad
b_r^i(x,t):=rb^i(rx,r^{p+1}t).
\]
Since this equation is uniformly parabolic in
$Q_2^+\cap\{x_n\geq\delta\}$, standard interior regularity estimates
give
\[
|Dv_r(x,t)|
\leq
C\left(
\|v_r\|_{L^\infty(Q_2^+)}
+r^{p+q+1}\|f\|_{L^\infty(Q_1^+)}
\right)
\quad\text{in }Q_{1/2}(0',1,0).
\]
Scaling back, we obtain
\[
|Dv(x,t)|
\leq
C\left(
\frac{\|v\|_{L^\infty(Q_{2r}^+)}}{r}
+r^{p+q}\|f\|_{L^\infty(Q_1^+)}
\right)
\quad\text{in }Q_{r/2}(0',r,0).
\]
By Lemma~\ref{lem:lipschitz estimate}, 
$\|v\|_{L^\infty(Q_{2r}^+)}\leq Cr,$
and hence
\[
|Dv(x,t)|
\leq
C\left(
1+\|f\|_{L^\infty(Q_1^+)}
\right)
\leq C.
\]
Translations in the $x'$- and $t$-directions then show that $Dv$ is
bounded in $Q_{1/2}^+$.

Since $Dv$ is bounded, choose $\hat q$ such that
\[
\gamma-p<\hat q<\min\{q,1-p\}.
\]
We may rewrite the equation as
\[
v_t-x_n^{1-p}a^{ij}D_{ij}v
=
x_n^{\hat q}
\left(
x_n^{q-\hat q}f
+x_n^{1-p-\hat q}b^iD_iv
\right).
\]
Since $q-\hat q>0$, $1-p-\hat q>0$, and $Dv$ is bounded, the
right-hand side is bounded. Moreover, $\gamma<p+\hat q$. Therefore,
by Theorem~\ref{thm:boundary 1 alpha regularity}, for every point
\[
z_0=(x_0',0,t_0)
\in\pa_p Q_{1/2}^+\cap\{x_n=0\},
\]
there exists a linear function
\[
L_{z_0}(x):=a_{z_0}x_n
\]
such that
\[
|v(x,t)-L_{z_0}(x)|
\leq
C\left(
|x-x_0|+|t-t_0|^{\frac1{p+1}}
\right)^{1+\gamma},
\]
with $|a_{z_0}|\leq C$.

We next establish the
$C_{x,t}^{1+\gamma,\frac{1+\gamma}{2}}$ estimate away from
$\{x_n=0\}$. For $0<r<1/2$, define
\[
w_{r,z_0}(x,t)
:=
\frac{
(v-L_{z_0})\bigl(z_0+(rx,r^{p+1}t)\bigr)
}{r^{p+1}},
\qquad
(x,t)\in Q_1^+.
\]
Then $w_{r,z_0}$ satisfies
\[
(w_{r,z_0})_t-x_n^{1-p}
\left(
a_r^{ij}D_{ij}w_{r,z_0}
+b_r^iD_iw_{r,z_0}
\right)
=
x_n^q\wt f,
\]
where
\[
a_r^{ij}(x,t)
:=
a^{ij}\bigl(z_0+(rx,r^{p+1}t)\bigr),
\qquad
b_r^i(x,t)
:=
rb^i\bigl(z_0+(rx,r^{p+1}t)\bigr),
\]
and
\[
\wt f(x,t)
:=
r^qf\bigl(z_0+(rx,r^{p+1}t)\bigr)
+a_{z_0}r^{1-p} x_n^{1-p-q}
b^n\bigl(z_0+(rx,r^{p+1}t)\bigr).
\]
The interior $C^{1,\gamma}$ estimate gives
\begin{align*}
\|w_{r,z_0}\|_
{C_{x,t}^{1+\gamma,\frac{1+\gamma}{2}}
(Q_{1/4}(0',1/2,0))}
&\leq
C\left(
\|w_{r,z_0}\|_{L^\infty(Q_{3/8}(0',1/2,0))}
+
\|\wt f\|_{L^\infty(Q_{3/8}(0',1/2,0))}
\right)\\
&\leq
C\left(
\frac{1}{r^{p+1}}
\|v-L_{z_0}\|_{L^\infty(Q_r^+(z_0))}
+
\|\wt f\|_{L^\infty(Q_{3/8}(0',1/2,0))}
\right)\\
&\leq
C\left(
r^{\gamma-p}+r^q+r^{1-p}
\right)\\
&\leq
Cr^{\gamma-p},
\end{align*}
where we used $\gamma<p+q$.

By the definition of $w_{r,z_0}$, for $i=1,\ldots,n$,
\[
D_iv-\delta_i^na_{z_0}=r^pD_iw_{r,z_0}.
\]
Scaling back, we obtain
\[
\begin{aligned}
\bigl[D_i v\bigr]_{C_x^\gamma(Q_{r/4}(x_0',r/2,t_0))}
&=
r^{p-\gamma}
[D_iw_{r,z_0}]_{C_x^\gamma(Q_{1/4}(x_0',1/2,0))}\leq C
\end{aligned}
\]
and
\[
\begin{aligned}
\bigl[D_iv\bigr]_{C_t^{\gamma/2}(Q_{r/4}(x_0',r/2,t_0))}
&=
r^{p-\frac{(p+1)\gamma}{2}}
[D_iw_{r,z_0}]_{C_t^{\gamma/2}(Q_{1/4}(x_0',1/2,0))}
\leq
Cr^{\frac{\gamma(1-p)}2}
\leq C.
\end{aligned}
\]
Moreover, since
\[
v-L_{z_0}=r^{p+1}w_{r,z_0},
\]
we have
\[
\begin{aligned}
\bigl[v\bigr]_{C_t^{\frac{1+\gamma}{2}}
(Q_{r/4}(x_0',r/2,t_0))}
&=
r^{p+1-\frac{(p+1)(1+\gamma)}{2}}
[w_{r,z_0}]_{C_t^{\frac{1+\gamma}{2}}
(Q_{1/4}(x_0',1/2,0))}\leq
Cr^{\frac{(1-p)(1+\gamma)}2}
\leq C.
\end{aligned}
\]

The preceding scaled interior estimate also implies that, for every
$z_0=(x_0',h,t_0)\in Q_{1/2}^+$, with
$\overline z_0=(x_0',0,t_0)$,
\begin{equation}\label{eq:gradient boundary approximation}
|Dv(z_0)-a_{\overline z_0}e_n|
\leq Ch^\gamma.
\end{equation}
Indeed, when $h$ is sufficiently small, this follows by taking
$r=2h$ in the estimate for $w_{r,\overline z_0}$. When $h$ is bounded
away from zero, it follows from
\[
|Dv|+|a_{\overline z_0}|\leq C.
\]

We also claim that the boundary slopes are H\"older continuous. More
precisely, if
\[
\xi=(\xi',0,\tau),
\qquad
\eta=(\eta',0,\sigma),
\]
then
\begin{equation}\label{eq:boundary slopes holder}
|a_\xi-a_\eta|
\leq
C\left(
|\xi'-\eta'|+|\tau-\sigma|^{\frac12}
\right)^\gamma.
\end{equation}
Indeed, set
\[
\delta
:=
|\xi'-\eta'|
+
|\tau-\sigma|^{\frac1{p+1}}.
\]
Assuming that $\tau\geq\sigma$, we evaluate the boundary expansions
at
\[
P=(\xi',\delta,\sigma)
\]
to obtain
\[
\begin{aligned}
\delta|a_\xi-a_\eta|
&\leq
|v(P)-a_\xi\delta|
+
|v(P)-a_\eta\delta|
\leq
C\delta^{1+\gamma}.
\end{aligned}
\]
This proves \eqref{eq:boundary slopes holder}, since
\[
|\tau-\sigma|^{1/(p+1)}
\leq
|\tau-\sigma|^{1/2}.
\]
When $\delta$ is bounded away from zero, the same estimate follows
from
\[
|a_\xi|+|a_\eta|\leq C.
\]

We now combine the boundary and interior estimates. Let
\[
z_0=(x,t),
\qquad
z_1=(y,s)\in Q_{1/2}^+,
\]
and set
\[
\rho:=|x-y|+|t-s|^{\frac12}.
\]
Without loss of generality, assume that
\[
h:=x_n\leq y_n = :k.
\]
If $\rho<h/4$, the interior estimate gives
\[
|Dv(z_0)-Dv(z_1)|
\leq
C\rho^\gamma.
\]
If $\rho\geq h/4$, then
\[
h\leq4\rho,
\qquad
k\leq h+|x-y|\leq5\rho.
\]
Writing
\[
\overline z_0=(x',0,t),
\qquad
\overline z_1=(y',0,s),
\]
and using \eqref{eq:gradient boundary approximation} and
\eqref{eq:boundary slopes holder}, we obtain
\[
\begin{aligned}
|Dv(z_0)-Dv(z_1)|
&\leq
|Dv(z_0)-a_{\overline z_0}e_n|
+
|a_{\overline z_0}-a_{\overline z_1}|
+
|a_{\overline z_1}e_n-Dv(z_1)|\\
&\leq
C\left(
h^\gamma+\rho^\gamma+k^\gamma
\right)\\
&\leq
C\rho^\gamma.
\end{aligned}
\]

It remains to establish the H\"older continuity of $v$ in time. Fix
$(x,t),(x,s)\in Q_{1/2}^+$ and set
\[
h:=x_n,
\qquad
\tau:=|t-s|^{\frac12}.
\]
If $\tau<h/4$, the preceding local interior estimate gives
\[
|v(x,t)-v(x,s)|
\leq
C|t-s|^{\frac{1+\gamma}{2}}.
\]
If $\tau\geq h/4$, then, using the boundary condition
$
v(x',0,t)=v(x',0,s)=0
$ 
and the estimate
\[
[Dv]_{C_t^{\gamma/2}(Q_{1/2}^+)}
\leq C,
\]
we obtain
\[
\begin{aligned}
|v(x,t)-v(x,s)|
&=
\left|
\int_0^h
\left(
D_nv(x',\xi,t)-D_nv(x',\xi,s)
\right)\,d\xi
\right|\\
&\leq
Ch|t-s|^{\frac{\gamma}{2}}\\
&\leq
C|t-s|^{\frac{1+\gamma}{2}}.
\end{aligned}
\]
Combining the two cases yields
$
[v]_{C_t^{\frac{1+\gamma}{2}}(Q_{1/2}^+)}
\leq C.
$
Together with the H\"older estimate for $Dv$, this completes
\[
v\in
C_{x,t}^{1+\gamma,\frac{1+\gamma}{2}}(Q_{1/2}^+).
\]
Undoing the normalization gives the desired estimate and completes
the proof.
\end{proof}

\begin{thm}\label{thm:C1alpha of nonlinear equation}
Let $\om\subset\mathbb R^n$ be a smooth bounded domain, let $T>0$, and
let $p\in(-1,0)$. Suppose that $v_0\in C(\ol\om)$ satisfies
\[
c_0d(x)\leq v_0(x)\leq C_0d(x)
\qquad
\text{in }\om
\]
for some constants $0<c_0<C_0<\infty$. Then
\eqref{eq:equation for v} admits a unique admissible solution $v$.
Moreover, there exists $\gamma\in(0,1)$ such that
$
v\in
C_{x,t}^{1+\gamma,\frac{1+\gamma}{2}}
(\ol\om\times [T/2,T])
$
with
\[
\|v\|_{C_{x,t}^{1+\gamma,\frac{1+\gamma}{2}}
(\ol\om\times\left[\frac{T}{2},T\right])}
\leq
C\|v\|_{L^\infty(\om\times(0,T])},
\]
where $\gamma$ and $C>0$ depend only on
$n,p,\|\om\|_{C^3},T,c_0$, and $C_0$.
\end{thm}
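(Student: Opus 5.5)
Existence and uniqueness are already given by Theorem~\ref{thm:existence}, together with the two-sided bound \eqref{eq:comparable to distance function}: $c_Td\le v\le C_Td$ on $\om\times[0,T]$. The plan is therefore to reduce to the flat-boundary result, Theorem~\ref{thm:C1alpha of nonlinear equation in Q1}, in two stages. First we obtain a preliminary $C^\gamma$ regularity of $v/d$, so that the coefficient $v^{1-p}/d^{1-p}$ is H\"older. Then we apply the $C^{1,\gamma}$ theorem to $v$ itself.

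\emph{Localization and flattening.} Fix $x_0\in\pa\om$ and a $C^3$ boundary-flattening map $\Psi$ near $x_0$ with $\Psi(\om\cap B_{r_0}(x_0))\subset\{y_n>0\}$ and $y_n=d(x)$ in these coordinates. For $t_0\in[T/2,T]$, consider the rescaled cylinder $Q_{r}^+$ with time length $r^{p+1}$ ending at $t_0$, where $r$ is small depending on $T$ so that the cylinder stays inside $t>T/4$. In $y$-coordinates the equation $v_t=v^{1-p}\Delta v$ becomes
\[
v_t-y_n^{1-p}\bigl(\tilde a^{ij}D_{ij}v+\tilde b^iD_iv\bigr)=0,
\qquad
\tilde a^{ij}=\Big(\frac{v}{y_n}\Big)^{1-p}(D\Psi D\Psi^T)^{ij},
\quad
\tilde b^i=\Big(\frac{v}{y_n}\Big)^{1-p}\Delta\Psi^i .
\]
By \eqref{eq:comparable to distance function}, these coefficients are bounded and uniformly elliptic, with $\lambda$ depending on $c_T,C_T,\|\om\|_{C^3}$. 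Because $C^3$ regularity of $\om$ makes $\Delta\Psi$ of class $C^1$, the only nonsmooth ingredient in the coefficients is $v/y_n$.

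\emph{Step 1: H\"older continuity of $v/y_n$.} We apply Theorem~\ref{thm:boundary 1 alpha regularity}, which requires only bounded measurable coefficients, to a drift-free equation. To get rid of the drift, move $y_n^{1-p}\tilde b^iD_iv$ to the right-hand side. This term is $y_n^{q}f$ with $f$ bounded provided $Dv$ is bounded near the boundary. Boundedness of $Dv$ follows exactly as in the first part of the proof of Theorem~\ref{thm:C1alpha of nonlinear equation in Q1}: the interior scaled estimate on $Q_{r/2}(0',r,0)$ combined with $|v|\le Cy_n$. That argument only uses the Lemma~\ref{lem:lipschitz estimate}-type bound, which we already have from \eqref{eq:comparable to distance function}, together with interior Schauder estimates, which hold since the equation is uniformly parabolic away from the boundary. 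Taking $q=1-p$ (or any $\hat q\in(-p,1-p]$), Theorem~\ref{thm:boundary 1 alpha regularity} gives boundary expansions $|v-a_{z_0}y_n|\le C(|y-x_0|+|t-t_0|^{1/(p+1)})^{1+\gamma_0}$. Combining these expansions with the interior estimates, exactly as in the gluing argument of Theorem~\ref{thm:C1alpha of nonlinear equation in Q1} (slopes H\"older in $z_0$ and interior comparison), yields $v/y_n\in C^{\gamma_0,\gamma_0/2}_{x,t}$ up to $\{y_n=0\}$, with the value $a_{z_0}$ on the boundary. Hence $\tilde a^{ij},\tilde b^i\in C^{\gamma_0,\gamma_0/2}$. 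I expect this step to be the main obstacle: one must control the quotient $v/y_n$, not just $Dv$, across the region where the interior and boundary scales meet. Writing $v/y_n=\int_0^1 D_nv(y',sy_n,t)\,ds$ once $Dv$ is H\"older should dispose of it. The danger is circularity, because we want H\"older continuity of $Dv$ before the coefficients are H\"older. Theorem~\ref{thm:boundary 1 alpha regularity} avoids this, since it needs no coefficient regularity, and the interior part only needs Krylov--Safonov $C^{1,\gamma}$ estimates for nondivergence equations with merely continuous coefficients after freezing. Failing that, one can use a $W^{2,p}$/Krylov--Safonov gradient H\"older bound in the interior.

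\emph{Step 2: conclusion.} With H\"older coefficients in hand, Theorem~\ref{thm:C1alpha of nonlinear equation in Q1} (taking $f=0$, after shrinking $\gamma$ to at most $\gamma_0$) gives $\|v\|_{C^{1+\gamma,(1+\gamma)/2}}\le C\|v\|_{L^\infty}$ on a half-size cylinder near each boundary point. Away from $\pa\om$, standard interior Schauder estimates apply. Covering $\ol\om\times[T/2,T]$ by finitely many such cylinders, whose number depends on $\|\om\|_{C^3}$ and $T$, and transforming back through $\Psi$ then gives the claimed global estimate. The constants depend only on $n,p,\|\om\|_{C^3},T,c_0,C_0$ via $c_T$ and $C_T$.
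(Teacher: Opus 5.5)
Your proposal is correct and is essentially the paper's proof: the gradient bound, Theorem~\ref{thm:boundary 1 alpha regularity} with $q=1-p$ and the drift moved to the right-hand side, and the gluing of boundary expansions with scaled interior estimates are exactly the paper's steps, and once that gluing gives $Dv\in C^{\gamma}$ (as you plan, before writing $v/y_n=\int_0^1D_nv(y',sy_n,t)\,ds$) the theorem is already proved, so your Step~2 is redundant. The circularity you flagged should be resolved as the paper does, because uniform parabolicity alone does not justify the interior gradient and Schauder estimates: apply the interior H\"older (Krylov--Safonov) estimate to $v_r(x,t)=v(rx,r^{p+1}t)/r$, which solves the same equation with $c_Tx_n\le v_r\le C_Tx_n$, so that the coefficients $v_r^{1-p}a^{ij}_r$ are H\"older on interior cylinders uniformly in $r$.
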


\begin{proof}
Existence and uniqueness follow from
Theorems~\ref{thm:comparison} and~\ref{thm:existence}. Moreover, the
proof of Theorem~\ref{thm:existence} yields constants
$0<c_T\leq C_T<\infty$ such that
\[
c_Td(x)\leq v(x,t)\leq C_Td(x)
\qquad
\text{in }\om\times[0,T].
\]
By decomposing the domain, flattening the boundary, and then applying
a sliding transformation, it suffices to consider the equation
\[
v_t-v^{1-p}
\left(
a^{ij}D_{ij}v+b^iD_iv
\right)
=0
\qquad
\text{in }Q_1^+,
\]
where 
$a^{ij},b^i\in
C_{x,t}^{\gamma_0,\gamma_0/2}(Q_1^+)$
for some $\gamma_0\in (0,1)$.

Since $v$ is comparable to $x_n$ near the boundary, the same argument
as in the first part of the proof of
Theorem~\ref{thm:C1alpha of nonlinear equation in Q1} shows that $v$
is Lipschitz continuous near the boundary and
\[
\|Dv\|_{L^\infty(Q_1^+)}
\leq
C\|v\|_{L^\infty(Q_1^+)}.
\]
The equation
\[
v_t-v^{1-p}a^{ij}D_{ij}v
=
v^{1-p}b^iD_iv
\qquad
\text{in }Q_1^+
\]
can therefore be written as
\[
v_t-x_n^{1-p}
\left(\frac{v}{x_n}\right)^{1-p}
a^{ij}D_{ij}v
=
x_n^{1-p}
\left(\frac{v}{x_n}\right)^{1-p}
b^iD_iv.
\]
Applying Theorem~\ref{thm:boundary 1 alpha regularity} with
\[
q=1-p
\qquad \text{and}\qquad
f=
\left(\frac{v}{x_n}\right)^{1-p}b^iD_iv,
\]
we find that, for every
$
(x_0,t_0)\in
\pa_pQ_{1/2}^+\cap\{x_n=0\},
$
there exists a constant $a_{(x_0,t_0)}$ such that
\[
|a_{(x_0,t_0)}|
\leq
C\|v\|_{L^\infty(Q_1^+)}
\]
and
\[
|v(x,t)-a_{(x_0,t_0)}x_n|
\leq
C\|v\|_{L^\infty(Q_1^+)}
\left(
|x-x_0|+|t-t_0|^{\frac1{p+1}}
\right)^{1+\gamma_1}
\]
for all
$
(x,t)\in
Q_1^+\cap Q_{1/2}^+(x_0,t_0).
$

We now use a standard scaling argument to establish regularity near
the boundary. Assume that $(x_0,t_0)=(0,0)$ and set
$
M:=\|v\|_{L^\infty(Q_1^+)}.
$
For $0<r<1/2$, define
\[
v_r(x,t):=
\frac{v(rx,r^{p+1}t)}{r}
\qquad
\text{in }Q_1^+.
\]
Then $v_r$ satisfies
\[
(v_r)_t-v_r^{1-p}
\left(
a_r^{ij}D_{ij}v_r+b_r^iD_iv_r
\right)
=0
\qquad
\text{in }Q_1^+,
\]
where
\[
a_r^{ij}(x,t)
:=
a^{ij}(rx,r^{p+1}t),
\qquad
b_r^i(x,t)
:=
rb^i(rx,r^{p+1}t).
\]
Since $v$ is comparable to $x_n$, we also have
\[
c_Tx_n\leq v_r(x,t)\leq C_Tx_n.
\]
Moreover, the classical interior H\"older estimate gives
\[
\|v_r\|_
{C_{x,t}^{\gamma_2,\frac{\gamma_2}{2}}
(Q_{1/3}(0',1/2,0))}
\leq C
\]
for some $\gamma_2\in(0,1)$, where $C$ is independent of $r$.

By the boundary regularity of $v$ at $(0,0)$, there exists a constant
$a$ such that, for $(x,t)\in Q_r^+$,
\[
\begin{aligned}
|v(x,t)-ax_n|
&\leq
CM\left(
|x|+|t|^{\frac1{p+1}}
\right)^{1+\gamma_1}\leq
CMr^{1+\gamma_1}.
\end{aligned}
\]
Therefore,
\[
|v_r(x,t)-ax_n|
\leq
CMr^{\gamma_1}
\qquad
\text{in }Q_1^+.
\]
Decreasing $\gamma$ if necessary, we may assume that
$
0<\gamma<
\min\{\gamma_0,\gamma_1,\gamma_2\}.
$
Let
\[
w_r(x,t):=
\frac{v_r(x,t)-ax_n}{r^\gamma}
\qquad
\text{in }Q_1^+.
\]
Then $|w_r|\leq CM$, and
\[
(w_r)_t-v_r^{1-p}
\left(
a_r^{ij}D_{ij}w_r+b_r^iD_iw_r
\right)
=
ar^{1-\gamma}v_r^{1-p}
b^n(rx,r^{p+1}t)
\qquad
\text{in }Q_1^+.
\]
Since
$
v_r\in
C_{x,t}^{\gamma_2,\frac{\gamma_2}{2}}
(Q_{1/3}(0',1/2,0))
$
and
$
a^{ij},b^i\in
C_{x,t}^{\gamma_0,\frac{\gamma_0}{2}}(Q_1^+),
$
the coefficients and the right-hand side in the equation for $w_r$
have uniformly bounded $C_{x,t}^{\gamma,\gamma/2}$ norms in
$Q_{1/3}(0',1/2,0)$. The interior Schauder estimates therefore give
$
w_r\in
C_{x,t}^{2+\gamma,\frac{2+\gamma}{2}}
(Q_{1/4}(0',1/2,0))
$
and
\[
\|w_r\|_
{C_{x,t}^{2+\gamma,\frac{2+\gamma}{2}}
(Q_{1/4}(0',1/2,0))}
\leq
CM.
\]

Returning to the original variables, we have
\[
v(x,t)
=
ax_n+
r^{1+\gamma}
w_r\left(
\frac{x}{r},
\frac{t}{r^{p+1}}
\right).
\]
Hence,
$
v\in
C_{x,t}^{1+\gamma,\frac{1+\gamma}{2}}
\left(
Q_{r/4}(0',r/2,0)
\right).
$
Moreover, for
$
(x,t),(y,s)\in Q_{r/4}(0',r/2,0),
$
we have
\begin{align*}
|Dv(x,t)-Dv(y,s)|
&=
r^\gamma
\left|
Dw_r\left(
\frac{x}{r},
\frac{t}{r^{p+1}}
\right)
-
Dw_r\left(
\frac{y}{r},
\frac{s}{r^{p+1}}
\right)
\right|\\
&\leq
CMr^\gamma
\left(
\frac{|x-y|}{r}
+
\left|
\frac{t-s}{r^{p+1}}
\right|^{\frac12}
\right)^\gamma\\
&\leq
CM\left(
|x-y|+|t-s|^{\frac12}
\right)^\gamma,
\end{align*}
and
\begin{align*}
|v(x,t)-v(x,s)|
&=
r^{1+\gamma}
\left|
w_r\left(
\frac{x}{r},
\frac{t}{r^{p+1}}
\right)
-
w_r\left(
\frac{x}{r},
\frac{s}{r^{p+1}}
\right)
\right|\\
&\leq
CMr^{1+\gamma}
\left|
\frac{t-s}{r^{p+1}}
\right|^{\frac{1+\gamma}{2}}\\
&\leq
CMr^{\frac{(1-p)(1+\gamma)}{2}}
|t-s|^{\frac{1+\gamma}{2}}\\
&\leq
CM|t-s|^{\frac{1+\gamma}{2}}.
\end{align*}
By translation in $x'$ and $t$, the preceding estimates hold for
every
$
(x_0,t_0)\in
\pa_pQ_{1/2}^+\cap\{x_n=0\}.
$

Repeating the argument in the proof of
Theorem~\ref{thm:C1alpha of nonlinear equation in Q1}, we obtain
$
v\in
C_{x,t}^{1+\gamma,\frac{1+\gamma}{2}}(Q_{1/2}^+)
$
and
\[
\|v\|_
{C_{x,t}^{1+\gamma,\frac{1+\gamma}{2}}(Q_{1/2}^+)}
\leq
CM.
\]
Finally, a finite covering argument, together with the interior
Schauder estimates for uniformly parabolic equations, yields
$
v\in
C_{x,t}^{1+\gamma,\frac{1+\gamma}{2}}
(\ol\om\times [T/2,T])
$
and
\[
\|v\|_
{C_{x,t}^{1+\gamma,\frac{1+\gamma}{2}}
(\ol\om\times\left[\frac{T}{2},T\right])}
\leq
C\|v\|_{L^\infty(\om\times(0,T])}.
\]
\end{proof}

\section{All time regularity and large time asymptotics}\label{sec:regularity}

Let $\omega\in C^\infty(\ol\om)$ be comparable to the distance
function $d(x)$. More precisely, assume that there exists
$\lambda\in(0,1]$ such that
\begin{equation}\label{eq:assumptions of omega}
\lambda d(x)\leq\omega(x)\leq\lambda^{-1}d(x),
\qquad
\|\omega\|_{C^3(\ol\om)}\leq1.
\end{equation}
Such a function exists. For example, one may take a suitably
normalized positive first eigenfunction of $-\Delta$ in $\om$ with
zero Dirichlet boundary condition.

Since the solution of \eqref{eq:equation for v} is comparable to the distance
function, the corresponding linearized equation takes the form
\begin{equation}\label{eq:linearized equation}
\pa_tu
-\omega(x)^{1-p}
\left[
\sum_{i,j=1}^n a^{ij}(x,t)u_{ij}
+\sum_{i=1}^n b^i(x,t)u_i
\right]
+c(x,t)u
=
\omega(x)^qf(x,t)
\end{equation}
in $\om\times(0,T]$, where the coefficient matrix
$\{a^{ij}\}_{i,j=1}^n$ is symmetric and
\begin{equation}\label{eq:structure_conditions}
|a^{ij}|+|b^i|+|c|+|f|\leq\lambda^{-1},
\qquad
a^{ij}\xi_i\xi_j\geq\lambda|\xi|^2
\quad\text{for all } \xi\in\mathbb R^n,
\end{equation}
in $\om\times(0,T]$.

Following \cite{daskalopoulos1998regularity, kim2009smooth, JTXZ}, we introduce the weighted H\"older and Schauder
spaces associated with the intrinsic metric $s$. Let $s$ be the
Riemannian metric on $\mathbb R_+^n$ defined by
\[
ds^2
=
\frac{dx_1^2+\cdots+dx_n^2}{2x_n^\alpha},
\]
and denote the induced geodesic distance by $s(x,y)$. This distance is
comparable to
\[
\ol s(x,y)
:=
\frac{|x-y|}
{x_n^{\frac{\alpha}{2}}
+y_n^{\frac{\alpha}{2}}
+|x-y|^{\frac{\alpha}{2}}},
\]
in the sense that
\[
c(n,\alpha)\ol s(x,y)
\leq
s(x,y)
\leq
C(n,\alpha)\ol s(x,y).
\]
The corresponding parabolic distance on
$\mathbb R_+^n\times\mathbb R$ is defined by
\[
s\bigl((x,t),(y,\tau)\bigr)
:=
\max\left\{
s(x,y),\,|t-\tau|^{1/2}
\right\}.
\]

On $\om$, the distance $s_\om$ is defined near the boundary by pulling
back $s$ through local boundary-flattening maps and is chosen to be
equivalent to the Euclidean distance away from the boundary. The
resulting distance is comparable to
\[
\ol s_\om(x,y)
:=
\frac{|x-y|}
{d(x)^{\frac{\alpha}{2}}
+d(y)^{\frac{\alpha}{2}}
+|x-y|^{\frac{\alpha}{2}}}.
\]
The associated parabolic distance is defined by
\[
s_\om\bigl((x,t),(y,\tau)\bigr)
:=
\max\left\{
s_\om(x,y),\,|t-\tau|^{1/2}
\right\}.
\]
By abuse of notation, we continue to write $s$ and $\ol s$ for
$s_\om$ and $\ol s_\om$, respectively.

The weighted H\"older norm associated with $s$ is defined by
\[
\|w\|_{\C^\gamma(\om\times[0,T])}
:=
\|w\|_{L^\infty(\om\times[0,T])}
+
[w]_{\C^\gamma(\om\times[0,T])},
\]
where
\[
[w]_{\C^\gamma(\om\times[0,T])}
:=
\sup_{\substack{P_1,P_2\in\om\times[0,T]\\P_1\neq P_2}}
\frac{|w(P_1)-w(P_2)|}{s(P_1,P_2)^\gamma}.
\]

Let $p\in(-1,0)$, $q\in(-p,(1-p)/2]$, and
\[
0<\gamma<\frac{2(p+q)}{p+1}.
\]
We define the weighted Schauder norm by
\[
\begin{aligned}
\|w\|_{\C^{2+\gamma}(\om\times(0,T])}
:={}&
\|w\|_{\C^\gamma(\om\times(0,T])}
+\sum_{i=1}^n
\|w_{x_i}\|_{\C^\gamma(\om\times(0,T])} \\
&+
\|d^{-q}w_t\|_{\C^\gamma(\om\times(0,T])}
+\sum_{1\leq i\leq j\leq n}
\left\|
d^{1-p-q}w_{x_ix_j}
\right\|_{\C^\gamma(\om\times(0,T])}.
\end{aligned}
\]

Notice that $s_\om$ is equivalent to the Euclidean distance away from the
boundary, while near the boundary,
\[
|x-y|\leq C s_\om(x,y).
\]
Consequently,
\[
C^\gamma(\om)\subset \C^\gamma(\om).
\]

For convenience, we recall the  main theorem  in \cite{JTXZ} and some useful lemmas.
\begin{thm}[\cite{JTXZ}, Theorem 1.1]\label{thm1.1}
Let $\Omega \subset \mathbb{R}^n$ be a bounded domain with smooth boundary. Let
\[
p \in (-1,0), \quad q \in (-p, \frac{1-p}{2}]\quad \text{and} \quad  \gamma^* := \frac{2p+2q}{p+1}.
\]
Let $T>0$.  Suppose for some  exponent $\gamma \in (0, \gamma^*)$, the coefficients satisfy \eqref{eq:assumptions of omega}, \eqref{eq:structure_conditions} and 
\[
\|a^{ij}\|_{\C^\gamma(\Omega\times [0,T])} + \|b^i\|_{\C^\gamma(\Omega\times [0,T])} + \|c\|_{\C^\gamma(\Omega\times [0,T])} \le \lambda^{-1}.
\]
Assume also that $f \in \C^\gamma(\Omega \times [0,T])$. Then, for any $g_0\in C_0(\ol{\om})$, there exists a unique classical solution $v\in \C^{2+\gamma}(\Omega\times (0,T])\cap C(\overline\Omega\times[0,T])$ to \eqref{eq:linearized equation} satisfying $v=0$ on $\pa\om\times [0,T]$ and $v(x,0)=g_0(x)$ in $\om$.
Moreover, the following estimates hold:
\begin{enumerate}
\item[(i)] For any $t_0 \in (0,T]$, there exists $C_{T,t_0} > 0$, which depends only on $n$, $p$, $q$, $\gamma$, $\lambda$, $t_0$, $T$, and $\|\partial\Omega\|_{C^{2,(p+1)\gamma/2}}$, such that
\[
\|v\|_{\C^{2+\gamma}(\Omega\times (t_0,T])}
\le C_{T,t_0}\left( \|g_0\|_{L^{\infty}(\om)}+\|f\|_{\C^\gamma(\Omega\times [0,T])}\right).
\] 
Moreover,  the quantity $\omega^{-q}v_t$ and $\omega^{1-p-q}D^2 v$ are H\"older continuous up to the boundary, and their boundary traces satisfy
\begin{equation}\label{eq:compatibility condition}
\omega^{-q}v_t=0,\quad   \omega^{1-p-q}D^2 v =-\frac{f}{a^{\nu \nu }} \nu \otimes \nu \quad \text{on } \partial\Omega\times(0,T],
\end{equation}
where $\nu$ denotes the unit inner normal vector field on $\partial\Omega$  and
$a^{\nu\nu}:=a^{ij}\nu_i\nu_j$.
\item [(ii)] 
If, in addition,  $g_0 \in \C^{2+\gamma}(\Omega)$ and satisfies the compatibility condition
\begin{equation}\label{eq:compatibility condition t=0}
\omega^{1-p-q} a^{ij}D_{ij}g_{0}=-f  \quad \text{on }\pa\om\times \{t=0\},
\end{equation}
then we have $v\in \C^{2+\gamma}(\overline{\Omega}\times [0,T])$, and the following estimate holds:
\[
\|v\|_{\C^{2+\gamma}(\Omega\times [0,T])}
\le C_T\left( \|g_0\|_{\C^{2+\gamma}(\om)}+\|f\|_{\C^\gamma(\Omega\times [0,T])}\right),
\] 
where $C_{T} > 0$, depending only on $n$, $p$, $q$, $\gamma$, $\lambda$, $\|\partial\Omega\|_{C^{2,(p+1)\gamma/2}}$, and $T$.
\end{enumerate} 
\end{thm}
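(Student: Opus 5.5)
The plan is the usual three-stage Schauder scheme: a priori estimates in the intrinsic spaces, existence by approximation, and the trace identities read off from the boundary expansion. First, a partition of unity and boundary flattening reduce everything to the interior, where the equation is uniformly parabolic and classical Schauder theory applies, and to the half-space model
\[
u_t-x_n^{1-p}\bigl(a^{ij}D_{ij}u+b^iD_iu\bigr)+cu=x_n^qf\quad\text{in }Q_1^+,\qquad u=0\ \text{on }\{x_n=0\}.
\]
The natural scaling is $x\mapsto rx$, $t\mapsto r^{p+1}t$. This is exactly why the metric $s$ uses the weight $x_n^{\alpha}$ with $\alpha=1-p$: a boundary cylinder of Euclidean size $r$ has $s$-size comparable to $r^{(p+1)/2}$. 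One then checks how each term of the $\C^{2+\gamma}$ norm scales. The goal at a boundary point $z_0$ is the expansion
\[
u(x,t)=a_{z_0}x_n+\kappa(z_0)\,x_n^{p+q+1}+O\bigl(\rho^{1+\frac{(p+1)\gamma}{2}}\bigr),
\qquad
\kappa(z_0)=-\frac{f(z_0)}{a^{nn}(z_0)(p+q)(p+q+1)},
\]
where $\rho$ is the parabolic distance $|x-x_0|+|t-t_0|^{1/(p+1)}$ to $z_0$. The middle term is the exact solution of the frozen ODE $-x_n^{1-p}a^{nn}u_{nn}=x_n^q f(z_0)$. The condition $\gamma<\gamma^*=2(p+q)/(p+1)$ says precisely that the error exponent $1+(p+1)\gamma/2$ stays below $p+q+1$, so this term is not absorbed into the remainder. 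The restriction $q\le(1-p)/2$ keeps $x_n^{1-p-q}D^2u$ and $x_n^{-q}u_t$ in the right scale.

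To get the expansion, I would first treat frozen constant coefficients with $b=c=0$. For that model I need a Liouville-type classification: a solution in $\R^n_+\times(-\infty,0]$ vanishing on $\{x_n=0\}$ with growth $o(\rho^{1+(p+1)\gamma^*/2})$ is of the form $ax_n+\kappa x_n^{p+q+1}$ (with $f$ constant). I would prove it as follows. Differentiate tangentially and in $t$, and apply Theorem~\ref{thm:boundary 1 alpha regularity} and Lemma~\ref{lem:lipschitz estimate} iteratively to the differences, which solve the homogeneous equation. This shows that $D_{x'}u$ and $u_t$ are linear in $x_n$ with coefficients of too low growth unless they are constant, hence zero. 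The problem then reduces to an ODE in $x_n$. Next comes a compactness (Simon-type blow-up) argument: assume the $\C^{2+\gamma}$ seminorm is not controlled by $\|u\|_{L^\infty}+\|f\|_{\C^\gamma}$, subtract the approximating profiles, rescale intrinsically, and pass to a limit. The limit solves the frozen model with forbidden growth, which contradicts the Liouville theorem. The interior $\C^{2+\gamma}$ estimates from the rescaled interior Schauder theory, together with the scaling computations already used in the proof of Theorem~\ref{thm:C1alpha of nonlinear equation in Q1}, give compactness and control of the weighted terms $x_n^{-q}u_t$ and $x_n^{1-p-q}D^2u$. Patching boundary and interior estimates exactly as in that proof yields (i) for $t\ge t_0$. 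Letting the base point tend to the boundary in the expansion gives the traces in \eqref{eq:compatibility condition}: $x_n^{-q}u_t\to0$, and $x_n^{1-p-q}D^2u\to(p+q)(p+q+1)\kappa\,e_n\otimes e_n=-\frac{f}{a^{nn}}e_n\otimes e_n$.

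Existence would go through the regularized problems with weight $(\omega+\varepsilon)^{1-p}$. These are uniformly parabolic with classical solutions. The barriers of Lemma~\ref{lem:lipschitz estimate} and Lemma~\ref{lem:hopf principle} hold uniformly in $\varepsilon$, and the a priori estimate above is proved uniformly in $\varepsilon$, which is automatic if the blow-up argument allows the weight $(x_n+\varepsilon)^{1-p}$ in the limit. Passing $\varepsilon\to0$ then gives a solution. Uniqueness follows from the maximum principle, since $c$ is bounded and $u$ is continuous with zero lateral data. For (ii), the compatibility condition \eqref{eq:compatibility condition t=0} allows the same expansion at corner points $t=0$, using the initial datum as the approximating profile, so the estimate extends down to $t=0$. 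I expect the main obstacle to be the boundary Liouville theorem together with the uniformity of the blow-up in the degenerate intrinsic geometry; in particular, one must show that the $x_n^{p+q+1}$ correction is the only non-smooth term compatible with the growth. If that fails, I would instead run a direct Campanato iteration using the $a_k,b_k$ scheme of Theorem~\ref{thm:boundary 1 alpha regularity} applied to $u-\kappa x_n^{p+q+1}$.
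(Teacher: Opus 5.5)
The paper does not prove this statement; it is quoted from \cite{JTXZ}, so your outline has to stand on its own. It has a genuine gap at its core: the boundary expansion is set at the wrong order. Since $x_n\le\rho$, and since $\gamma<\gamma^*$ gives $p+q+1>1+\frac{(p+1)\gamma}{2}$, the correction satisfies $\kappa(z_0)x_n^{p+q+1}=O(\rho^{1+(p+1)\gamma/2})$. So it \emph{is} absorbed by your remainder: a term survives only if its order is below the error exponent, and here it is above. Your expansion therefore reduces to $u=a_{z_0}x_n+O(\rho^{1+(p+1)\gamma/2})$. That is a first-order statement of the type of Theorems~\ref{thm:boundary 1 alpha regularity}--\ref{thm:C1alpha of nonlinear equation in Q1} (``$Du\in\C^\gamma$''), and it cannot control the weighted second-order quantities. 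Suppose the remainder is $O(r^{1+\beta})$ near a point at height $x_n=r$. Rescaling by $(x,t)\mapsto(rx,r^{p+1}t)$ and applying interior Schauder gives only
\[
|x_n^{1-p-q}D^2u|+|x_n^{-q}u_t|\lesssim 1+r^{\beta-p-q}.
\]
This is useless for $\beta=\frac{(p+1)\gamma}{2}<p+q$, and the local $\C^\gamma$ seminorms are worse by a further factor $r^{-(p+1)\gamma/2}$. Hence neither the estimate in (i) nor the traces \eqref{eq:compatibility condition} can be read off by letting the base point approach the boundary. The $\C^{2+\gamma}$ scale actually requires
\[
u=a_{z_0}x_n+\kappa(z_0)x_n^{p+q+1}+O\bigl(\rho^{1+p+q+\frac{(p+1)\gamma}{2}}\bigr).
\]
The condition $\gamma<\gamma^*$ also plays a different role from the one you assign. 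It is exactly what lets $D_n(x_n^{p+q+1})\sim x_n^{p+q}$ belong to $\C^\gamma$, so that the forced correction is compatible with $Du\in\C^\gamma$ when $f\neq0$ on $\pa\om$.

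The slip propagates to the rest of your plan.
\begin{itemize}
\item \emph{Liouville theorem.} Your growth hypothesis $o(\rho^{1+(p+1)\gamma^*/2})=o(\rho^{1+p+q})$ already forces $\kappa=0$, so the stated classification is inconsistent. In any case, a blow-up sequence normalized by the $\C^{2+\gamma}$ seminorm grows like $\rho^{1+p+q+(p+1)\gamma/2}$, which is above that threshold. What you need is the classification for growth up to $1+p+q+\frac{(p+1)\gamma}{2}$. Your tangential-differentiation argument does work there: $D_{x'}u$ and $u_t$ then grow like $\rho^{p+q+(p+1)\gamma/2}$ and $\rho^{q+(p+1)\gamma/2}$, both slower than $\rho$ because $\gamma<\gamma^*$ and $q\le\frac{1-p}{2}$. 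For the same reasons, the first homogeneous competitor beyond $ax_n$, namely $tx_n+\frac{x_n^{p+2}}{(p+1)(p+2)a^{nn}}$ with growth $\rho^{p+2}$, lies above the target.
\item \emph{Fallback.} The $a_k,b_k$ iteration of Theorem~\ref{thm:boundary 1 alpha regularity} is a Hopf/Harnack oscillation-decay argument. It traps $u$ between linear functions and yields only a small universal exponent $\gamma_0<p+q$. Applied to $u-\kappa x_n^{p+q+1}$, it again stops below order $1+p+q$.
\item \emph{Existence.} Uniformity in $\varepsilon$ is not ``automatic''. With weight $(\omega+\varepsilon)^{1-p}$ and source $\omega^qf$, each $u_\varepsilon$ has $D^2u_\varepsilon$ bounded up to $\pa\om$, so $\omega^{1-p-q}D^2u_\varepsilon=0$ on $\pa\om$. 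A bound on $\|d^{1-p-q}D^2u_\varepsilon\|_{\C^\gamma}$ uniform in $\varepsilon$ would pass to the limit and force the trace of $\omega^{1-p-q}D^2u$ to vanish. That contradicts \eqref{eq:compatibility condition} whenever $f\neq0$ on $\pa\om$. You must either regularize the source, the weight and the norm consistently, or obtain the limit from lower-order bounds and then justify applying the a priori estimate to it.
\end{itemize}
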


\begin{thm}\label{thm:cut off estimate}
Let $p,q,\gamma,\gamma^*,a^{ij},b^i,c$, and $f$ be as in
Theorem~\ref{thm1.1}. Suppose that
$
v\in\C^{2+\gamma}(\ol\om\times[0,T])
$
is a solution of
\[
\pa_tv
-\omega(x)^{1-p}
\left[
\sum_{i,j=1}^n a^{ij}(x,t)v_{ij}
+\sum_{i=1}^n b^i(x,t)v_i
\right]
+c(x,t)v
=
\omega(x)^qf(x,t)
\quad
\text{in }\om\times[0,T],
\]
satisfying
\[
v=0
\qquad
\text{on }\pa\om\times[0,T].
\]
Then, for every $r>0$, there exists a constant $C>0$, depending only
on $n,p,q,r,\gamma,\lambda$,
$\|\partial\Omega\|_{C^{2,(p+1)\gamma/2}}$, and $T$, such that
\[
\|v\|_{\C^{2+\gamma}
(\ol\om\times\left[\frac{T}{2},T\right])}
\leq
C\left(
\|v\|_{L^r(\ol\om\times[0,T])}
+
\|f\|_{\C^\gamma(\ol\om\times[0,T])}
\right).
\]
\end{thm}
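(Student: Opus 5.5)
The plan is to run a time cut-off argument, which reduces the estimate to Theorem~\ref{thm1.1}(ii) with zero initial data, and then to lower the integrability exponent from $L^\infty$ to $L^r$ by an interpolation and iteration over shrinking time intervals.

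\textbf{Step 1: cut-off in time.} Fix $0<t_1<t_2<T$ and a smooth $\eta(t)$ with $\eta=0$ on $[0,t_1]$, $\eta=1$ on $[t_2,T]$, and $|\eta'|\leq C/(t_2-t_1)$. Set $w=\eta v$. Then $w$ vanishes on $\pa\om\times[0,T]$ and has $w(\cdot,0)=0$, so the compatibility condition \eqref{eq:compatibility condition t=0} holds trivially. Moreover $w$ solves \eqref{eq:linearized equation} with right-hand side $\omega^q(\eta f+\eta'\omega^{-q}v)$.

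\textbf{Step 2: control the new source.} We need to bound the new source term in $\C^\gamma$. By \eqref{eq:compatibility condition}, or simply because $v\in\C^{2+\gamma}$ vanishes on the lateral boundary, $v$ vanishes linearly. Hence
\[
\omega^{-q}v=\omega^{1-q}\,(v/\omega),
\]
with $1-q>0$ since $q\leq (1-p)/2<1$. The factor $\omega^{1-q}$ is H\"older in $s$, and $v/\omega$ is controlled by the $C^1$-type part of the $\C^{2+\gamma}$ norm. So I expect
\[
\|\omega^{-q}v\|_{\C^\gamma(\om\times[t_1,T])}\leq C\|v\|_{\C^{1+\gamma'}}\lesssim \|Dv\|_{\C^\gamma}+\|v\|_{L^\infty}.
\]
A short lemma (using a mean value argument along the normal) checks that $\omega^{1-q}g$ lies in $\C^\gamma$ whenever $g$ and $Dv$ do. Theorem~\ref{thm1.1}(ii) applied on $[0,T]$ then gives
\[
\|v\|_{\C^{2+\gamma}(\om\times[t_2,T])}\leq C\Bigl(\|f\|_{\C^\gamma}+(t_2-t_1)^{-1}\|v\|_{\C^{1,\gamma}\text{-part on }[t_1,T]}\Bigr).
\]

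\textbf{Step 3: interpolation and iteration.} First I absorb the lower-order $\C^{1,\gamma}$ part. I use an interpolation inequality of the form
\[
\|Dv\|_{\C^\gamma}+\|v\|_{L^\infty}\leq\varepsilon\|v\|_{\C^{2+\gamma}}+C_\varepsilon\|v\|_{L^r}
\]
on each time slab. It follows from compactness, or from a direct Ehrling-type argument: $\C^{2+\gamma}$ embeds compactly into the Lipschitz-H\"older class, using that second derivatives blow up at most like $d^{p+q-1}$, which is integrable in the normal direction since $p+q>0$, and $L^r$ is weaker. Then I set up the standard iteration on the intervals $[T/2-\theta_k,T]$ with $\theta_k$ increasing to $T/2$, and let
\[
\Phi(\tau)=\|v\|_{\C^{2+\gamma}(\om\times[\tau,T])}.
\]
This gives
\[
\Phi(t_2)\leq \tfrac12\Phi(t_1)+C(t_2-t_1)^{-N}\bigl(\|v\|_{L^r}+\|f\|_{\C^\gamma}\bigr).
\]
The usual lemma (Giaquinta-type absorption) then yields the claim on $[T/2,T]$, since $\Phi$ is finite by hypothesis.

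\textbf{Main obstacle.} The main obstacle is the interpolation inequality in the weighted setting, in particular uniformity up to the degenerate boundary and the passage from $L^r$ with $r$ possibly small. If a direct proof is messy, I would argue by contradiction. Take normalized sequences $v_k$ with $\|v_k\|_{\C^{2+\gamma}}$ bounded, $L^r$ norms tending to zero, and $\C^{1,\gamma}$ part bounded below. By Arzel\`a--Ascoli in the $s$-metric, together with the linear vanishing from Lemma~\ref{lem:lipschitz estimate}, extract a limit that vanishes identically, which gives a contradiction. The weight computation in Step 2 is routine but requires care with $\gamma<\gamma^*$.
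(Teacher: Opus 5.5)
Your plan matches the paper's proof step for step. You cut off in time, apply Theorem~\ref{thm1.1}(ii) to $\eta v$ (zero initial data, so compatibility is automatic), bound the new source $\eta'\omega^{-q}v$ in $\C^\gamma$ by $Dv$, absorb with an interpolation inequality, and close with the Giaquinta--Giusti iteration lemma. There is, however, one step where your proposed justification does not deliver what the next line uses.

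\textbf{The interpolation constant needs a rate.} The iteration lemma needs the recursion $\Phi(t_2)\le\frac12\Phi(t_1)+C(t_2-t_1)^{-N}(\|v\|_{L^r}+\|f\|_{\C^\gamma})$.

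To get the factor $\frac12$ you must apply the interpolation inequality with $\varepsilon\sim(t_2-t_1)^{N_0}$. You therefore need $C_\varepsilon\le C\varepsilon^{-m}$.

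An Ehrling-type compactness or contradiction argument only gives some finite $C_\varepsilon$ for each fixed $\varepsilon$, with no rate. If $C_\varepsilon$ grows faster than every power, the series in the Giaquinta--Giusti lemma can diverge and the lemma does not apply. So the compactness route has to be replaced by a quantitative one.

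\textbf{A quantitative replacement.} This is elementary and uses the weight computation you already anticipate.

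First, from $|D^2v|\le Cd^{p+q-1}\|v\|_{\C^{2+\gamma}}$ and $|v_t|\le Cd^{q}\|v\|_{\C^{2+\gamma}}$, one checks that $Dv$ is bounded by $C\|v\|_{\C^{2+\gamma}}$ both in Euclidean $C^{p+q}$ and in $\C^{\gamma^*}$. Here $\alpha=1-p$. For pairs at comparable distance $h$ from the boundary, $s\approx|x-y|\,h^{-(1-p)/2}$. For far pairs, $s\approx|x-y|^{(1+p)/2}$. For time differences, interpolate $v_t$ against $D^2v$ on balls of radius $(h^{1-p}|t-\tau|)^{1/2}$.

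Then the following hold:
\[
[Dv]_{\C^\gamma}\le\varepsilon[Dv]_{\C^{\gamma^*}}+C\varepsilon^{-\frac{\gamma}{\gamma^*-\gamma}}\|Dv\|_{L^\infty},\qquad
\|Dv\|_{L^\infty}\le\varepsilon[Dv]_{C^{p+q}}+C\varepsilon^{-\frac{1}{p+q}}\|v\|_{L^\infty}.
\]

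Next, $v$ is Lipschitz in $(x,t)$ with constant $K\le C\|v\|_{\C^{2+\gamma}}$. Hence $|v|\ge M/2$ on a space-time set of measure $\gtrsim\min\{1,M/K\}^{n+1}$ around a point where $|v|=M$. Young's inequality then gives $\|v\|_{L^\infty}\le\varepsilon K+C\varepsilon^{-(n+1)/r}\|v\|_{L^r}$, which also covers $r<1$.

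All these constants are polynomial in $\varepsilon$ and uniform over slabs of length at least $T/2$. This is exactly what the paper's ``there exists $\beta>2$'' encodes. Lemma~\ref{lem:lipschitz estimate} plays no role here: it is a statement about solutions, while this interpolation is purely about functions.

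\textbf{A minor bookkeeping point.} The $\C^\gamma$ norm of $\eta'\omega^{-q}v$ contains the time H\"older seminorm of $\eta'$ with respect to $|t-\tau|^{1/2}$. So you also need $|\eta''|\le C(t_2-t_1)^{-2}$, and the prefactor is $(t_2-t_1)^{-1-\gamma/2}$ rather than $(t_2-t_1)^{-1}$; the paper simply writes $(s-\tau)^{-2}$. Any fixed power is harmless for the iteration.
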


\begin{proof}
Let $0<\tau<s\leq T$, and choose a cutoff function $\eta(t)$ such
that
\[
\eta(t)=0\quad\text{for }t\leq\tau,
\qquad
\eta(t)=1\quad\text{for }t\geq s,
\]
and
\[
|\eta'(t)|
\leq
\frac{C}{s-\tau},
\qquad
|\eta''(t)|
\leq
\frac{C}{(s-\tau)^2},
\]
where $C$ is an absolute constant. Set
$
\wt v:=\eta v,
$
then
\[
\pa_t\wt v
-\omega(x)^{1-p}
\left[
\sum_{i,j=1}^n a^{ij}(x,t)\wt v_{ij}
+\sum_{i=1}^n b^i(x,t)\wt v_i
\right]
+c(x,t)\wt v
=
\omega(x)^q\eta(t) f(x,t)+\eta'(t)v
\]
in $\om\times[0,T]$.

Since $v=0$ on $\pa\om\times[0,T]$, the boundary quotient estimate
implies that
\[
\frac{v}{\omega^q}
\in
\C^\gamma(\om\times[0,T]).
\]
Moreover, $\wt v(\cdot,0)=0$, and hence the compatibility condition at
$t=0$ is automatically satisfied. Therefore,
Theorem~\ref{thm1.1}(ii) gives
\begin{align*}
\|v\|_{\C^{2+\gamma}(\om\times[s,T])}
&\leq
C\left\|
\eta f+\eta'\frac{v}{\omega^q}
\right\|_{\C^\gamma(\om\times[0,T])}\\
&\leq
C(s-\tau)^{-2}
\|Dv\|_{\C^\gamma(\om\times[\tau,T])}
+
C(s-\tau)^{-2}
\|f\|_{\C^\gamma(\om\times[\tau,T])}.
\end{align*}
By an interpolation inequality, there exists $\beta>2$ such that
\[
C(s-\tau)^{-2}
\|Dv\|_{\C^\gamma(\om\times[\tau,T])}
\leq
\frac12
\|v\|_{\C^{2+\gamma}(\om\times[\tau,T])}
+
C(s-\tau)^{-\beta}
\|v\|_{L^r(\om\times[\tau,T])}.
\]
Consequently,
\begin{align*}
\|v\|_{\C^{2+\gamma}(\om\times[s,T])}
&\leq
\frac12
\|v\|_{\C^{2+\gamma}(\om\times[\tau,T])}+
C(s-\tau)^{-\beta}
\left(
\|v\|_{L^r(\om\times[\tau,T])}
+
\|f\|_{\C^\gamma(\om\times[\tau,T])}
\right).
\end{align*}
An application of the iteration lemma
\cite{giaquinta1982regularity}*{Lemma 1.1} yields
\[
\|v\|_{\C^{2+\gamma}(\om\times[s,T])}
\leq
C(s-\tau)^{-\beta}
\left(
\|v\|_{L^r(\om\times[\tau,T])}
+
\|f\|_{\C^\gamma(\om\times[\tau,T])}
\right),
\]
from which the theorem follows.
\end{proof}

\begin{lem}[\cite{jin2024optimal}, Lemma 3.4]\label{lem: ellptical regularity}
Let $p\in(-1,0)$, and suppose that
$
u\in C(\ol\om)\cap C^2(\om)
$
is a solution of
\[
\begin{cases}
-\Delta u(x)=c(x)d(x)^p
&\text{in }\om,\\
u(x)=0
&\text{on }\pa\om,
\end{cases}
\]
where $c\in C^0(\ol\om)$. Then there exists a constant $C>1$,
depending only on $n,p$, and $\om$, such that
\[
\|u\|_{C^{1,p+1}(\ol\om)}
\leq
C\|c\|_{L^\infty(\om)}.
\]
\end{lem}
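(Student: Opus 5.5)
We have to prove Lemma~\ref{lem: ellptical regularity}: if $-\Delta u=c\,d^p$ in $\om$ with $u=0$ on $\pa\om$, then $\|u\|_{C^{1,p+1}(\ol\om)}\leq C\|c\|_{L^\infty}$. The plan is a Campanato/scaling argument that mirrors the boundary $C^{1,\gamma}$ scheme of Section~\ref{sec:C1alpha}, but is much simpler because the operator is the Laplacian. We normalize $\|c\|_{L^\infty}\le1$.

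\textbf{Step 1 (sup and Lipschitz bounds).} Since $p>-1$, the right-hand side satisfies $|c\,d^p|\le d^p$, so we compare with the barrier $\Psi:=K(\phi-\phi^{p+2})$. Here $\phi$ is the normalized first eigenfunction, which is comparable to $d$, and $|\nabla\phi|\ge c>0$ near $\pa\om$. Near the boundary one computes
\[
-\Delta\phi^{p+2}=-(p+2)(p+1)\phi^{p}|\nabla\phi|^2+(p+2)\lambda_1\phi^{p+2}.
\]
Hence $-\Delta\Psi\ge c'K d^p$ in a boundary strip, which exceeds the right-hand side once $K$ is large. In the interior, $\Psi$ is adjusted by adding a multiple of the torsion function. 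The maximum principle then gives $|u|\le C d$. Interior estimates at scale $d(x)/2$ give $|\nabla u(x)|\le C\bigl(\sup_{B_{d/2}}|u|/d+d^{p+1}\bigr)\le C$.

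\textbf{Step 2 (boundary expansion).} Fix $x_0\in\pa\om$ and flatten the boundary. This is harmless for the Laplacian because the flattened operator has smooth coefficients. We show there is $a_{x_0}$ with
\[
|u(x)-a_{x_0}\,\nu\cdot(x-x_0)|\le C|x-x_0|^{2+p}.
\]
The cleanest route is to subtract an explicit particular solution. In flattened coordinates, $w_0:=-\frac{c(x_0)}{(p+1)(p+2)}x_n^{p+2}$ satisfies $-\Delta w_0=c(x_0)x_n^p$. The remainder then has right-hand side $(c(x)-c(x_0))x_n^p$ plus lower-order terms. Since $c$ is merely continuous, however, one cannot simply freeze it. Instead we use an iteration of the type in the proof of Theorem~\ref{thm:boundary 1 alpha regularity}. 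Rescaling $u_r(x)=u(rx)/r$ gives $-\Delta u_r=r^{p+1}c(rx)x_n^p$, so the source has size $r^{p+1}$. We then approximate $u_r$ at each dyadic scale by the harmonic function with the same boundary values. That harmonic function is smooth up to the flat boundary and has the form $a x_n+O(|x|^2)$. The error is controlled by the Step 1 barrier as $Cr^{p+1}x_n$. Iterating and summing the geometric series gives slopes $a_k$ with $|a_{k+1}-a_k|\le C2^{-k(p+1)}$ and the expansion above with exponent $1+(p+1)$. The point is that $p+1<1$, so the $O(|x|^2)$ harmonic error never dominates. The same iteration shows that the boundary slopes satisfy $|a_{x_0}-a_{y_0}|\le C|x_0-y_0|^{p+1}$, by the argument used for \eqref{eq:boundary slopes holder}.

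\textbf{Step 3 (gluing).} For $x$ with $d(x)=h$, we rescale on $B_{h/2}(x)$. The function $(u-a_{\bar x}\nu\cdot(\cdot-\bar x))$ has size $Ch^{2+p}$ there, and the right-hand side has size $h^p$. Interior $C^{1,\beta}$ estimates (via $W^{2,q}$ for large $q$) then give
\[
[\nabla u]_{C^{p+1}(B_{h/4}(x))}\le C \qquad\text{and}\qquad |\nabla u(x)-a_{\bar x}\nu|\le Ch^{p+1}.
\]
Finally, combining these in the two cases $|x-y|<d(x)/4$ and $|x-y|\ge d(x)/4$, exactly as at the end of the proof of Theorem~\ref{thm:C1alpha of nonlinear equation in Q1}, yields $[\nabla u]_{C^{p+1}(\ol\om)}\le C$.

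The main obstacle is Step 2 with $c$ only continuous. The approximation must be organized so that each scale contributes exactly $r^{p+1}$, with no loss from freezing $c$. Using the $L^\infty$ barrier of Step 1 rather than any modulus of continuity of $c$ handles this.
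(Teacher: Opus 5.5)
Your proof is correct, but it takes a different route from the paper's.

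The paper argues by potential theory. It writes $u(x)=\int_\Omega G(x,y)c(y)d(y)^p\,dy$ with the Dirichlet Green function of $-\Delta$. It then reads off the $C^{1,p+1}$ bound from the standard boundary estimates for $\nabla_xG$ (Gr\"uter--Widman type), combined with a dyadic decomposition of the integral according to $|x-y|$ and $d(y)$.

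You instead run a maximum-principle/Campanato scheme modeled on Section~\ref{sec:C1alpha}:
\begin{enumerate}
\item a barrier ($K(\phi-\phi^{p+2})$ plus torsion, or $Ax_n-Bx_n^{p+2}$ after flattening) gives $|u|\le Cd$;
\item harmonic replacement at dyadic scales gives the boundary expansion with exponent $2+p$ and $C^{p+1}$ boundary slopes;
\item interior $W^{2,q}$ estimates at scale $d(x)$ and the two-case gluing from the proof of Theorem~\ref{thm:C1alpha of nonlinear equation in Q1} finish the argument.
\end{enumerate}

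The Green-function proof is much shorter once the kernel bounds are granted. However, it relies on sharp pointwise bounds for $\nabla_xG$ and its increments near $\partial\Omega$. Your proof uses only comparison and boundary $C^2$ regularity of the homogeneous problem. It therefore applies unchanged to the flattened variable-coefficient operator. It also makes transparent why the exponent is exactly $p+1$: the source $x_n^p$ scales like $r^{2+p}$, strictly below the $r^2$ remainder of the replacement, and only $\|c\|_{L^\infty}$ is used.

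One bookkeeping point in Step~2. Replacing $u(rx)/r$ itself leaves an $O(|x|^2)$ remainder whose constant is of order one, and that does not yield the $r^{2+p}$ rate. You can fix this in either of two ways:
\begin{itemize}
\item apply the replacement to $(u-a_kx_n)(r_k\,\cdot)/r_k^{2+p}$, using a fixed small ratio $\rho$ with $C\rho^{-p}\le\frac12$ in place of $\frac12$; or
\item compare consecutive replacements, using $|h_k-h_{k+1}|\le Cr_k^{p+1}x_n$ on $B^+_{r_{k+1}}$. This gives both $|a_{k+1}-a_k|\le Cr_k^{p+1}$ and $|D^2h_k|\le Cr_k^{p}$ on $B^+_{r_{k+1}}$.
\end{itemize}
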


\begin{proof}
Let $G(\cdot,\cdot)$ denote the Green function of $-\Delta$ in
$\Omega$ with zero Dirichlet boundary condition. Then
\[
u(x)
=
\int_{\Omega}
G(x,y)c(y)d(y)^p\,dy.
\]
The conclusion follows from the standard boundary estimates for
$\nabla_xG$, combined with an elementary dyadic decomposition; see,
for example, \cite{gruterwidman1982}*{Lemma 3.5}. We also refer to
\cite{gilbargtrudinger2001}*{Chapter~4} for the relevant elementary
calculations.
\end{proof}

\begin{thm}\label{thm:regularity of vt in short time}
Let $p\in(-1,0)$, $\gamma\in(0,(1+p)/2)$, and $T>0$. Let $v$ be
the admissible solution of
\[
\pa_tv=v^{1-p}\Delta v
\qquad
\text{in }\om\times[0,T],
\]
satisfying
\begin{equation}\label{eq:upper and lower bound}
\frac{1}{c_0}d(x)
\leq
v(x,t)
\leq
c_0d(x)
\end{equation}
for some constant $c_0>0$. Then
\[
v(x,\cdot)\in C^\infty((0,T])
\qquad
\text{for every }x\in\ol\om,
\]
and
\[
\pa_t^kv(\cdot,t)\in C^{1,p+1}(\ol\om)
\qquad
\text{for all }t\in(0,T]
\text{ and }k\in\mathbb N\cup\{0\}.
\]
Moreover, there exists a constant $C>0$, which depends only on
$n$, $p$, $T$, $k$, $c_0$ and $\|\pa\om\|_{C^{2,(p+1)\gamma/2}}$, such that
\[
\sup_{t\in\left[\frac{T}{2},T\right]}
\|\pa_t^kv(\cdot,t)\|_{C^{1,p+1}(\ol\om)}
\leq C.
\]
\end{thm}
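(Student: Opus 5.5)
The proof proceeds by induction on $k$, differentiating the equation in time and applying the linear weighted Schauder theory (Theorem~\ref{thm1.1} and Theorem~\ref{thm:cut off estimate}) to each time derivative. Spatial $C^{1,p+1}$ regularity is then read off at each fixed time from the elliptic Lemma~\ref{lem: ellptical regularity}.

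\emph{Base step.} By Theorem~\ref{thm:C1alpha of nonlinear equation}, $v\in C^{1+\gamma_1,(1+\gamma_1)/2}_{x,t}$ on $\ol\om\times[T/8,T]$. Write the equation as
\[
v_t-\omega^{1-p}a\,\Delta v=0,\qquad a:=(v/\omega)^{1-p}.
\]
By \eqref{eq:upper and lower bound} and the $C^{1,\gamma_1}$ bound, $v/\omega$ is bounded above and below and is Hölder continuous in the Euclidean sense up to the boundary. Hence $a\in\C^\gamma$ for small $\gamma$, since $C^\gamma\subset\C^\gamma$. Take $q=(1-p)/2$, $f=0$, and regard $v$ as the solution of this linear problem on $[T/4,T]$ with initial data $v(\cdot,T/4)$. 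Theorem~\ref{thm1.1}(i) then gives $\|v\|_{\C^{2+\gamma}(\om\times[T/2,T])}\le C$. In particular $d^{-q}v_t\in\C^\gamma$, so $|v_t|\le Cd^q$. Moreover, for each $t$,
\[
-\Delta v(\cdot,t)=-v^{p-1}v_t=c(x)\,d^p,\qquad c:=-\frac{v_t\,d^{-p}}{v^{1-p}}d^{\,0}\cdot d^{\,1-p}\!\cdot d^{-(1-p)} .
\]
Explicitly, $c=-v_t\,d^{-p}v^{p-1}$, and this is bounded because $|v_t|\le Cd^{q}$ with $q\ge 1-p-(1-p)$. To obtain a bound exactly of the size needed, I would instead use $|v_t|=|v^{1-p}\Delta v|\le C d^{1-p}|D^2v|$ together with $d^{1-p-q}D^2v\in L^\infty$, which gives $|v_t|\le Cd^{q}$. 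Then $|v_t v^{p-1}|\le Cd^{q+p-1}$, and this is $\le C d^{p}$ precisely when $q\ge1$. Since $q\le (1-p)/2<1$, this naive bound is insufficient.

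\emph{The main obstacle} is therefore to show $v_t=O(d)$, that is, $|\Delta v|\le Cd^{p}$ near the boundary. My first attempt would be to differentiate in time. Set $w=v_t$; it satisfies the linearized equation
\[
w_t-v^{1-p}\Delta w-(1-p)v^{-p}\Delta v\,w=0,\qquad\text{i.e.}\qquad w_t-\omega^{1-p}a\Delta w+c\,w=0,\quad c:=-(1-p)\frac{v_t}{v}.
\]
The coefficient $c=-(1-p)v_t/v$ is bounded by $Cd^{q-1}$, so it is not yet bounded. To get around this, I would first apply Lemma~\ref{lem:lipschitz estimate}-type barriers to $w$: $w$ vanishes on $\pa\om$, and its zero-order term $c\,w$ can be moved to the right side as $\omega^{q'}\tilde f$ with $q'=2q-1+\ldots$. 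Alternatively, I would use the time difference quotients $(v(t+h)-v(t))/h$. Each of these is a difference of two solutions of the nonlinear equation, so it solves a linear equation of the form \eqref{original equation} with bounded coefficients and zero right side (using the mean-value form $v_t(\cdot,s)=\theta^{1-p}\Delta\ldots$). Theorem~\ref{thm:boundary 1 alpha regularity} with Lemma~\ref{lem:lipschitz estimate} then gives $|v(t+h)-v(t)|/h\le Cd$ uniformly in $h$. Comparison with the rescaled solutions $\lambda^{1/(1-p)}v(x,\lambda t)$, which are again solutions, yields the same conclusion cleanly: from Theorem~\ref{thm:comparison} and the two-sided bound, one gets $|t v_t|\le C v\le Cd$. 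This scaling trick is the one I would try first.

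With $|v_t|\le Cd$ in hand, $c=-(1-p)v_t/v$ is bounded, and $-\Delta v=(v_t/d)(d/v)^{1-p}d^{p}$ has a bounded coefficient. Lemma~\ref{lem: ellptical regularity} then gives $v(\cdot,t)\in C^{1,p+1}$ with a uniform bound.

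\emph{Induction.} Suppose that $|\pa_t^jv|\le C_jd$ and $\pa_t^jv\in\C^{2+\gamma}$ for $j\le k$ on shrinking time intervals. Then $w_k=\pa_t^kv$ solves
\[
\pa_tw_k-\omega^{1-p}a\Delta w_k+c\,w_k=\omega^{q}f_k ,
\]
where $f_k$ collects products of lower-order $\pa_t^jv/\omega$ and $\omega^{1-p-q}\Delta\pa_t^{i}v$. These lie in $\C^\gamma$ by the induction hypothesis, using the boundary quotient estimate. Theorem~\ref{thm:cut off estimate} then gives $w_k\in\C^{2+\gamma}$ on a smaller interval. In particular, $w_{k+1}=\pa_tw_k$ satisfies $|w_{k+1}|\le Cd^q$; the Lipschitz estimate Lemma~\ref{lem:lipschitz estimate}, applied to the equation of $w_{k+1}$ with right side $\omega^q(\cdot)$, upgrades this to $\le Cd$. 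Differentiating $-\Delta v=v^{p-1}v_t$ $k$ times in $t$ expresses $-\Delta\pa_t^kv$ as $d^p$ times a bounded continuous function, by the bounds $|\pa_t^jv|\le Cd$. Lemma~\ref{lem: ellptical regularity} then yields $\pa_t^kv(\cdot,t)\in C^{1,p+1}(\ol\om)$ uniformly for $t\in[T/2,T]$. Adjusting the time intervals by a finite covering gives the stated estimate, and smoothness in $t$ uniformly in $x$ follows from the boundedness of all $\pa_t^kv$.
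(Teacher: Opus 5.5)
There is a genuine gap at the step you single out as the main obstacle: the bound $|v_t|\le Cd$, i.e.\ boundedness of the zero-order coefficient $v_t/v$.

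The scaling trick you would try first does not give it. Since $\lambda^{1/(1-p)}v_0\ge v_0$ precisely when $\lambda\ge1$, Theorem~\ref{thm:comparison} gives $\lambda^{1/(1-p)}v(x,\lambda t)\ge v(x,t)$ for $\lambda>1$ and the reverse inequality for $\lambda<1$. Differentiating at $\lambda=1$, both yield the same one-sided Aronson--B\'enilan inequality $tv_t\ge -\frac{v}{1-p}$. That only says $-\Delta v\le \frac{v^p}{(1-p)t}$. Nothing bounds $v_t$ from above, so Lemma~\ref{lem: ellptical regularity}, which needs $|\Delta v|\le Cd^p$, is not available.

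The difference-quotient route is also misdescribed. The function $u=v(\cdot,t+h)-v(\cdot,t)$ satisfies $u_t-v(\cdot,t+h)^{1-p}\Delta u=(1-p)\theta^{-p}\Delta v(\cdot,t)\,u$. So there is a zero-order term whose coefficient is comparable to $v_t/v$, the very quantity you are trying to bound. From the $\C^{2+\gamma}$ bound with $q=\frac{1-p}{2}$, this coefficient is only known to be $O(d^{-(1+p)/2})$. The equation therefore has neither bounded coefficients nor a vanishing zero-order term, and Theorem~\ref{thm:boundary 1 alpha regularity} and Lemma~\ref{lem:lipschitz estimate} do not apply as you state.

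Your first attempt hits the same wall quantitatively. Moving $(1-p)w^2/v$ to the right-hand side with $|w|\le Cd^q$ gives $O(d^{2q-1})=O(d^{-p})$. That is exactly the endpoint excluded by the requirement $q>-p$ in Lemma~\ref{lem:lipschitz estimate}.

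The paper handles this by keeping the zero-order term on the right-hand side. For $v^h_\lambda=(v(\cdot,t)-v(\cdot,t-h))/h^\lambda$ it writes that term as $d^{\frac{1-p}{2}}f$ with $f=\bigl(d^{\frac{1-p}{2}}\Delta v(\cdot,t-h)\bigr)\frac{v^h_\lambda}{d}\int_0^1(1-p)[\cdots]^{-p}\,ds$. This $f$ is bounded, and in $\C^\gamma$, as soon as $v^h_\lambda/d$ is. The paper then bootstraps in $\lambda$:
\begin{itemize}
\item For $\lambda_0=\gamma/2$, $v^h_{\lambda_0}/d$ is bounded because $Dv$ is $\frac{\gamma}{2}$-H\"older in $t$ and $v=0$ on $\pa\om$.
\item Theorem~\ref{thm:C1alpha of nonlinear equation in Q1} and Theorem~\ref{thm1.1} then give $v^h_{\lambda_0}\in\C^{2+\gamma}$ uniformly in $h$.
\item The finite-difference improvement passes to $\lambda_1$ with $\lambda_1-\lambda_0<\gamma/2$.
\item After finitely many steps, $\lambda_N=1$ gives $\pa_tv\in\C^{2+\gamma}$, hence $|v_t|\le Cd$.
\end{itemize}

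Alternatively, your first attempt can be rescued using the boundary trace $\omega^{-q}v_t=0$ from Theorem~\ref{thm1.1}(i). Since $\omega^{-q}v_t\in\C^\gamma$ vanishes on $\pa\om$, and $s(x,\bar x)\approx d(x)^{(1+p)/2}$ (with $\alpha=1-p$, and $\bar x\in\pa\om$ the closest boundary point), one gets $|v_t|\le Cd^{q+\gamma(1+p)/2}$. Then $(1-p)w^2/v=O(d^{-p+\gamma(1+p)})$, whose exponent is strictly larger than $-p$, and Lemma~\ref{lem:lipschitz estimate} gives $|v_t|\le Cd$.

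Once that bound is established, your induction on $k$ and the final use of Lemma~\ref{lem: ellptical regularity} are essentially the paper's argument. One correction: invoke Theorem~\ref{thm1.1} (existence and uniqueness) rather than Theorem~\ref{thm:cut off estimate}. The latter is an a priori estimate that presupposes $w_k\in\C^{2+\gamma}$.
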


\begin{proof}
By Theorem~\ref{thm:C1alpha of nonlinear equation}, we have
$
v\in
C_{x,t}^{1+\gamma,\frac{1+\gamma}{2}}
\left(
\ol\om\times\left[\frac{T}{20},T\right]
\right).
$
It follows that
\[
\frac{v}{d}
\in
\C^\gamma
\left(
\ol\om\times\left[\frac{T}{20},T\right]
\right).
\]
In particular,
\[
\left\|
\frac{v}{d}
\right\|_{\C^\gamma
(\ol\om\times[\frac{T}{10},T])}
\leq
C
\|v\|_{C_{x,t}^{1+\gamma,\frac{1+\gamma}{2}}
(\om\times[\frac{T}{20},T])}
\leq C.
\]
Viewing the equation as a linear equation with coefficient
$(v/d)^{1-p}$ and applying Theorem~\ref{thm1.1} on $[T/10,T]$,
after translating in time and taking $q=(1-p)/2$, we obtain
\[
\|v\|_{\C^{2+\gamma}
(\ol\om\times[\frac{T}{9},T])}
\leq C,
\]
where $C>0$ depends only on
$n,p,c_0,\gamma,T$, and
$\|\pa\om\|_{C^{2,(p+1)\gamma/2}}$.

We next prove by a bootstrap argument that there exists a constant
$C>0$, depending only on
$n,p,c_0,\gamma,T$, and
$\|\pa\om\|_{C^{2,(p+1)\gamma/2}}$, such that
\[
\|\pa_tv\|_{\C^{2+\gamma}
(\ol\om\times[\frac{T}{5},T])}
\leq C.
\]
Let
\[
v_\lambda^h(x,t)
:=
\frac{v(x,t)-v(x,t-h)}{h^\lambda},
\]
where $\lambda\in(0,1]$ and
$h\in(0,T/100)$ is sufficiently small. The equation for $v$ gives
\begin{equation}\label{eq:equation of v lambda h}
\pa_tv_\lambda^h
-
\left(\frac{v}{d}\right)^{1-p}
d^{1-p}\Delta v_\lambda^h
=
d^{\frac{1-p}{2}}f
\qquad
\text{in }\om\times\left(\frac{T}{100},T\right],
\end{equation}
where
\begin{align*}
f(x,t)
&=
d(x)^{\frac{p-1}{2}}
\frac{
v(x,t)^{1-p}-v(x,t-h)^{1-p}
}{h^\lambda}
\Delta v(x,t-h)\\
&=
d(x)^{\frac{p-1}{2}}
\Delta v(x,t-h)
\frac{1}{h^\lambda}
\int_0^1
\frac{d}{ds}
\left[
sv(x,t)+(1-s)v(x,t-h)
\right]^{1-p}
\,ds\\
&=
d(x)^{\frac{p-1}{2}}
\Delta v(x,t-h)
\frac{v(x,t)-v(x,t-h)}{h^\lambda}\\
&\qquad\cdot
\int_0^1
(1-p)
\left[
sv(x,t)+(1-s)v(x,t-h)
\right]^{-p}
\,ds\\
&=
\left(
d(x)^{\frac{1-p}{2}}
\Delta v(x,t-h)
\right)
\frac{v_\lambda^h(x,t)}{d(x)}\\
&\qquad\cdot
\int_0^1
(1-p)
\left[
s\frac{v(x,t)}{d(x)}
+
(1-s)\frac{v(x,t-h)}{d(x)}
\right]^{-p}
\,ds.
\end{align*}

Let $\lambda_0=\gamma/2$, and choose a finite sequence
\[
\lambda_0<\lambda_1<\cdots<\lambda_N=1
\]
such that
\[
\lambda_{j+1}-\lambda_j<\frac{\gamma}{2},
\qquad
j=0,\ldots,N-1.
\]
By the regularity of $v$, we have
\[
\left|
\frac{v_{\lambda_0}^h}{d}
\right|
\leq C
\qquad
\text{in }\om\times\left[\frac{T}{8},T\right].
\]
By a standard localization and boundary-flattening argument, together
with the usual interior estimates and a finite covering,
Theorem~\ref{thm:C1alpha of nonlinear equation in Q1}, applied to
\eqref{eq:equation of v lambda h}, yields
\[
\|v_{\lambda_0}^h\|_
{C_{x,t}^{1+\gamma,\frac{1+\gamma}{2}}
(\ol\om\times[\frac{T}{8},T])}
\leq C.
\]
Since $v_{\lambda_0}^h=0$ on
$\pa\om\times[T/8,T]$, it follows that
\[
\left\|
\frac{v_{\lambda_0}^h}{d}
\right\|_
{\C^\gamma(\ol\om\times[\frac{T}{8},T])}
\leq C.
\]
Moreover, since
$
v\in
\C^{2+\gamma}
\left(
\ol\om\times\left[T/9,T\right]
\right),
$
we have
\[
\frac{v}{d},
\quad
d(x)^{\frac{1-p}{2}}\Delta v(x,t-h)
\in
\C^\gamma
\left(
\ol\om\times\left[\frac{T}{8},T\right]
\right).
\]
Hence,
\[
\|f\|_{\C^\gamma
(\ol\om\times[\frac{T}{8},T])}
\leq C.
\]
Consequently, Theorem~\ref{thm1.1} gives
\[
\|v_{\lambda_0}^h\|_{\C^{2+\gamma}
(\ol\om\times\left[\frac{T}{7},T\right])}
\leq C
\]
uniformly in $h$. By the standard finite-difference improvement
result and the choice
$
\lambda_1-\lambda_0<\frac{\gamma}{2},
$
we obtain, on a slightly smaller time interval,
\[
\left|
\frac{v_{\lambda_1}^h}{d}
\right|
\leq C.
\]
Repeating the preceding argument finitely many times on nested time
intervals, we obtain
\[
\sup_{0<h<\frac{T}{100}}
\|v_1^h\|_{\C^{2+\gamma}
(\ol\om\times\left[\frac{T}{5},T\right])}
\leq C.
\]
Letting $h\to0$, we conclude that
\[
\|\pa_tv\|_{\C^{2+\gamma}
(\ol\om\times\left[\frac{T}{5},T\right])}
\leq
C\left(
n,p,c_0,\gamma,T, \|\pa\om\|_{C^{2,(p+1)\gamma/2}}
\right).
\]

Let 
$
w_k:=\pa_t^kv,
$ 
and 
$
q_k:=w_k/v.
$
Repeatedly differentiating the equation for $v$ with respect to time,
we obtain
\[
\pa_tw_k
-v^{1-p}\Delta w_k
+c_k(q_1)w_k
=
d(x)F_k
\left(
\frac{v}{d},q_1,\ldots,q_{k-1}
\right),
\]
where $c_k$ and $F_k$ are smooth functions depending only on $p$ and
$k$. Fix $l\in\mathbb N$. Applying Theorem~\ref{thm1.1} inductively
on finitely many nested time intervals gives
\[
\max_{1\leq k\leq l+1}
\|w_k\|_{\C^{2+\gamma}
(\ol\om\times\left[\frac{T}{2},T\right])}
\leq
C\left(
n,p,c_0,\gamma,T,
\|\pa\om\|_{C^{2,(p+1)\gamma/2}},l
\right).
\]

On the other hand,
\[
-\Delta w_l
=
-d(x)^p
\left(
\frac{v}{d}
\right)^{p-1}\left[F_l
\left(
\frac{v}{d},q_1,\ldots,q_{l-1}
\right)-c_l(q_1) \frac{w_l}{d}- \frac{w_{l+1}}{d}\right].
\]
Therefore,
Lemma~\ref{lem: ellptical regularity} yields
\[
\sup_{t\in\left[\frac{T}{2},T\right]}
\|\pa_t^lv(\cdot,t)\|_{C^{1,p+1}(\ol\om)}
\leq
C\left(
n,p,c_0,\gamma,T,
\|\pa\om\|_{C^{2,(p+1)\gamma/2}},l
\right).
\]
\end{proof}

\begin{thm}\label{thm: v controlled by S}
Let $p\in(-1,0)$, $\gamma\in(0,(1+p)/2)$, and
$v_0\in C(\ol{\om})$ satisfy \eqref{eq:inequality for v0} for some
constant $c_0$. Then \eqref{eq:equation for v} admits a unique
admissible solution
$
v\in\C^{2+\gamma}(\ol{\om}\times(0,+\infty)),
$
and
\be\label{eq: long time existence of v}
\frac{1}{C_0}(1+t)^{\frac{1}{p-1}}S(x)
\leq
v(x,t)
\leq
C_0(1+t)^{\frac{1}{p-1}}S(x)
\quad
\text{on }\ol{\om}\times[0,+\infty),
\ee
for some constant $C_0>0$ depending only on
$n,p,c_0$, and $\|\pa\om\|_{C^{2,(p+1)\gamma/2}}$, where $S$ is the
unique solution of \eqref{eq:equation of S}.

Moreover, for every $x\in\ol{\om}$,
\[
v(x,\cdot)\in C^\infty((0,+\infty)),
\]
and, for every $t\in(0,+\infty)$ and
$k\in\mathbb N\cup\{0\}$,
\[
\pa_t^kv(\cdot,t)\in C^{1,p+1}(\ol{\om}).
\]
Finally, for every $\delta>0$ and
$k\in\mathbb N\cup\{0\}$, there exists a constant $C>0$, depending
only on
$n,C_0,p,k,\delta$, and $\|\pa\om\|_{C^{2,(p+1)\gamma/2}}$, such that
\be\label{eq:decay estimates of higher order regularity}
\left\|
\pa_t^kv(\cdot,t)
\right\|_{C^{1,p+1}(\ol{\om})}
\leq
C(1+t)^{\frac{1}{p-1}-k}
\quad
\text{for all }t\in[\delta,+\infty).
\ee
\end{thm}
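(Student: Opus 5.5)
The proof combines three ingredients: a comparison argument with time-shifted friendly giant solutions, the short-time regularity of Theorem~\ref{thm:regularity of vt in short time} applied on unit-length windows, and a rescaling that makes the windows uniform in $t$.

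\textbf{Step 1: global existence and the two-sided bound.} Since $S\in C^{1,p+1}(\ol\om)$ is positive in $\om$ and satisfies the Hopf bounds $c_Sd\le S\le C_Sd$, hypothesis \eqref{eq:inequality for v0} gives $\mu^{-1}S\le v_0\le \mu S$ for some $\mu=\mu(c_0,\om,p)\ge1$. I will compare $v$ with scaled, time-shifted giants. The equation $\pa_t(v^p)=p\Delta v$ is invariant under $v\mapsto \kappa v(x,\kappa^{p-1}t)$. Hence $W_\tau(x,t):=(t+\tau)^{\frac1{p-1}}S(x)$ is an exact solution, and $W_\tau(\cdot,0)=\tau^{\frac1{p-1}}S$. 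Since $1/(p-1)<0$, choosing $\tau_1$ small gives $W_{\tau_1}(\cdot,0)\ge \mu S\ge v_0$, and choosing $\tau_2$ large gives $W_{\tau_2}(\cdot,0)\le \mu^{-1}S\le v_0$. Both $W_{\tau_i}$ are admissible, since $S\asymp d$ and $d^p\in L^1$.

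Theorem~\ref{thm:existence} gives existence on every $[0,T]$. Uniqueness from Theorem~\ref{thm:comparison} lets me patch these into a global admissible solution. Theorem~\ref{thm:comparison} then gives $W_{\tau_2}\le v\le W_{\tau_1}$. Since $(t+\tau)^{\frac1{p-1}}\asymp(1+t)^{\frac1{p-1}}$ with constants depending on $\tau$, this yields \eqref{eq: long time existence of v}, with $C_0$ depending only on $\mu$.

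\textbf{Step 2: regularity at finite times.} On each $[0,T]$, \eqref{eq: long time existence of v} gives \eqref{eq:upper and lower bound} with a constant depending on $T$. Theorem~\ref{thm:regularity of vt in short time}, with the proof of Theorem~\ref{thm:C1alpha of nonlinear equation} supplying the $\C^{2+\gamma}$ bound, gives $v\in\C^{2+\gamma}$ on $\ol\om\times[T/9,T]$. It also gives $v(x,\cdot)\in C^\infty$ and $\pa_t^kv(\cdot,t)\in C^{1,p+1}(\ol\om)$. Letting $T$ range over $(0,\infty)$ covers all positive times. This also gives the decay estimate \eqref{eq:decay estimates of higher order regularity} for $t\in[\delta,2]$, say.

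\textbf{Step 3: uniform decay via rescaling.} This is the main obstacle, because the constants in Theorem~\ref{thm:regularity of vt in short time} deteriorate with $T$. For $t_0\ge 2$ I set $\kappa:=(1+t_0)^{\frac1{1-p}}$ and define
\[
\wt v(x,s):=\kappa\, v\bigl(x,\,t_0+\kappa^{p-1}s\bigr)=\kappa\, v\bigl(x,\,t_0+(1+t_0)s\bigr),\qquad s\in[-\tfrac12,0].
\]
By the scaling invariance, $\wt v$ again solves $\pa_s\wt v=\wt v^{1-p}\Delta\wt v$. For $s\in[-\frac12,0]$ the time $t=t_0+(1+t_0)s$ satisfies $t+1\asymp 1+t_0$, so \eqref{eq: long time existence of v} gives $C^{-1}S\le\wt v\le CS$ with $C$ independent of $t_0$.

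Next I shift time so that $\wt v$ lives on $[0,\frac12]$, and apply Theorem~\ref{thm:regularity of vt in short time} with $T=\frac12$ and a uniform constant in \eqref{eq:upper and lower bound}. That theorem uses only the two-sided bound on the time window, not the initial datum, so it applies here. It gives $\|\pa_s^k\wt v(\cdot,s)\|_{C^{1,p+1}(\ol\om)}\le C_k$ at the endpoint $s=0$, with $C_k$ independent of $t_0$. Undoing the scaling, $\pa_t^kv=\kappa^{-1}(1+t_0)^{-k}\pa_s^k\wt v$, which gives
\[
\|\pa_t^kv(\cdot,t_0)\|_{C^{1,p+1}(\ol\om)}\le C_k(1+t_0)^{\frac1{p-1}-k}.
\]
Together with Step 2 on $[\delta,2]$ this proves \eqref{eq:decay estimates of higher order regularity}.

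The point to verify carefully is that every constant in Theorems~\ref{thm:C1alpha of nonlinear equation} and~\ref{thm:regularity of vt in short time} depends only on the comparability constant, with the same $\gamma$ and domain. The same rescaled argument also gives the uniform $\C^{2+\gamma}$ membership on $\ol\om\times(0,\infty)$, interpreted locally uniformly in time.
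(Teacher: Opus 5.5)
Your argument is correct and follows the paper's proof essentially step by step: comparison with the time-shifted giants $(t+t_i)^{\frac{1}{p-1}}S$, patching the finite-time solutions by uniqueness, applying Theorem~\ref{thm:regularity of vt in short time} on finite windows, and a parabolic rescaling to a unit window. The only difference in route is that the paper rescales by $t_0^{\frac{1}{1-p}}v(x,t_0t)$ with $t\in[\frac12,1]$ for all $t_0\ge\delta$ at once, instead of treating $[\delta,2]$ separately.

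One slip needs fixing. The invariance is $v\mapsto\kappa v(x,\kappa^{1-p}t)$, not $\kappa v(x,\kappa^{p-1}t)$; with $\kappa=(1+t_0)^{\frac{1}{1-p}}$ one has $\kappa^{p-1}=(1+t_0)^{-1}$. Your explicit formula $\kappa v(x,t_0+(1+t_0)s)$ already uses the correct scaling, so the rest of Step 3 goes through unchanged.
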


\begin{proof}
Since \(v_0\) and \(S\) are both comparable to \(d\), there exist
\(t_1,t_2>0\), depending only on
\(n,p,c_0\) and \(\|\pa\om\|_{C^{2,(p+1)\gamma/2}}\), such that
\[
t_1^{\frac{1}{p-1}}S(x)
\leq v_0(x)
\leq t_2^{\frac{1}{p-1}}S(x)
\quad \text{on } \ol{\om}.
\]
For every \(T>0\), Theorem \ref{thm:existence} gives an admissible
solution \(v^T\) on \(\ol{\om}\times[0,T]\). Comparing \(v^T\) with the
solutions
\[
(t+t_i)^{\frac{1}{p-1}}S(x),\qquad i=1,2,
\]
and applying Theorem \ref{thm:comparison}, we obtain
\[
(t+t_1)^{\frac{1}{p-1}}S(x)
\leq v^T(x,t)
\leq (t+t_2)^{\frac{1}{p-1}}S(x)
\quad \text{on } \ol{\om}\times[0,T].
\]
By uniqueness, these finite-time solutions agree on their common time
intervals and therefore define a unique admissible solution \(v\) on
\(\ol{\om}\times[0,+\infty)\). In particular,
\be\label{eq:barrier estimates}
(t+t_1)^{\frac{1}{p-1}}S(x)
\leq v(x,t)
\leq (t+t_2)^{\frac{1}{p-1}}S(x)
\quad \text{on } \ol{\om}\times[0,+\infty).
\ee
Since \(t+t_i\) is comparable to \(1+t\), this proves
\eqref{eq: long time existence of v}. The stated regularity follows from
Theorem \ref{thm:regularity of vt in short time}, applied on arbitrary
finite time intervals.

It remains to prove
\eqref{eq:decay estimates of higher order regularity}. For any
\(t_0\geq\delta\), define
\[
\wt{v}(x,t)
=
t_0^{\frac{1}{1-p}}v(x,t_0t),
\qquad
(x,t)\in\ol{\om}\times\left[\frac12,1\right].
\]
Then \(\wt{v}\) satisfies the same equation. By
\eqref{eq: long time existence of v} and the comparability of \(S\) and
\(d\), there exists \(c_\delta>1\), independent of \(t_0\), such that
\[
\frac{1}{c_\delta}d(x)
\leq \wt{v}(x,t)
\leq c_\delta d(x)
\quad\text{on }
\ol{\om}\times\left[\frac12,1\right].
\]
Therefore, after a time translation, Theorem
\ref{thm:regularity of vt in short time} gives
\[
\left\|\pa_t^k\wt{v}(\cdot,1)\right\|_{C^{1,p+1}(\ol{\om})}
\leq C,
\]
where \(C\) is independent of \(t_0\). Since
\[
\pa_t^k\wt{v}(x,1)
=
t_0^{\frac{1}{1-p}+k}\pa_t^k v(x,t_0),
\]
we obtain
\[
\left\|\pa_t^k v(\cdot,t_0)\right\|_{C^{1,p+1}(\ol{\om})}
\leq
Ct_0^{\frac{1}{p-1}-k}
\leq
C(1+t_0)^{\frac{1}{p-1}-k}.
\]
Since \(t_0\geq\delta\) is arbitrary, this proves
\eqref{eq:decay estimates of higher order regularity}.
\end{proof}

\begin{proof}[Proof of Theorem \ref{thm:main thm 1}.]
The existence, uniqueness, and all the stated regularity properties
follow from Theorem~\ref{thm: v controlled by S}.

To see the optimality, let $\om=(0,1)$ and consider the friendly giant
solution
\[
V(x,t)
=
(1+t)^{\frac{1}{p-1}}S(x).
\]
Since $S(x)\asymp x$ near $x=0$ and
\[
-S''
=
\frac{1}{1-p}S^p,
\]
we have
\[
S'(0)-S'(x)
=
\frac{1}{1-p}
\int_0^x S(s)^p\,ds
\asymp
x^{p+1}.
\]
Consequently,
\[
V(\cdot,t)
\notin
C^{1,p+1+\varepsilon}([0,1])
\]
for every $\varepsilon\in(0,-p)$ and every $t>0$. Thus, this example
already demonstrates the optimality of the exponent $p+1$ in
\eqref{eq:main thm 1 regularity in x}.
\end{proof}

\begin{rem}\label{cor: regularity of v over Sm}
Let $\om$ be a bounded smooth domain, let $p\in(-1,0)$, and let
$v_0\in C(\ol\om)$ be positive and satisfy
\eqref{eq:inequality for v0} for some constant $c_0$. If $v$ is a
nonnegative admissible solution of \eqref{eq:equation for v}, then
\be\label{eq:regularity for v over Sm}
\frac{v}{S}\in C^{p+1}(\ol{\om})
\qquad
\text{for all }t>0,
\ee
where $S$ is given by \eqref{eq:equation of S}. This follows from the
same argument as in \cite{jin2024optimal}*{Corollary 1.2}.

Moreover, the exponent $p+1$ in
\eqref{eq:regularity for v over Sm} is optimal, as can be seen by
adapting the construction in
\cite{jin2024optimal}*{Example 4.2}.
\end{rem}

Finally, we investigate the large-time asymptotics. Since the details are essentially the same as those in \cite{jin2024optimal}, we present only the main steps.

\begin{proof}[Proof of Theorem \ref{main thm 1.3}.]
Let $v$ be a solution of \eqref{eq:equation for v}, then
we rescale variables as follows:
\be\label{eq:rescale giant solution}
\theta(x,\tau):=t^{\frac{1}{1-p}}v(x,t)\quad \text{with}~t=e^\tau.
\ee
Then
\[
\pa_\tau \theta^p =p\Delta \theta+\frac{p}{1-p}\theta^p\quad \text{in}~\om\times (0,\infty).
\]
It follows from \eqref{eq:barrier estimates} that
\be\label{eq:L infty convergence of theta}
\left\| \frac{\theta(\cdot,\tau)}{S(\cdot)}-1 \right\|_{L^\infty(\om)}\leq Ct^{-1} \leq Ce^{-\tau}\quad \text{for all } \tau>1,
\ee
where  $S$ is from \eqref{eq:equation of S}. 
Moreover, for all $\tau\geq 2$,
\be\label{eq:C p+1 convergence of  theta and Theta}
\begin{split}
\left\|\frac{\theta(\cdot,\tau)-S}{S}\right\|_{C^{p+1}(\ol{\om})}
& \leq C e^{-\tau}.
\end{split}
\ee

The estimate \eqref{eq:decay rate of v} follows immediately from
Theorem~\ref{thm: v controlled by S}. For the remaining assertions, we
follow the spectral-decomposition argument in
\cite{jin2024optimal}*{Section 5}. We only indicate the
ingredients that differ in the present setting.

Notice that \(h=\theta-S\) satisfies
\[
S^{p-1}h_\tau=-\mathscr L_S h+N(h),
\]
where
\[
N(h)
=
\frac{1}{1-p}S^p
\left[
\left(1+\frac{h}{S}\right)^p-1-p\frac{h}{S}
\right]
+
S^{p-1}
\left[
1-\left(1+\frac{h}{S}\right)^{p-1}
\right]h_\tau .
\]
The estimates obtained above imply, for all sufficiently large \(\tau\),
\[
|N(h)|\leq CS^p e^{-2\tau},
\qquad
\|N(h)\|_{\C^{p+1}(\Omega\times[\tau-1,\tau])}
\leq Ce^{-2\tau}.
\]

By Hardy's inequality, the embedding
\[
H_0^1(\Omega)
\hookrightarrow
L^2(\Omega;S^{p-1}\,dx)
\]
is compact. Hence, \(\mathscr L_S\) admits the required spectral
decomposition, with first eigenpair
\[
\mu_1=1,
\qquad
\psi_1
=
\frac{S}{\|S\|_{L^2(\Omega;S^{p-1}\,dx)}}.
\]
The spectral projection argument in
\cite{jin2024optimal}*{proof of Theorem~1.3} then yields constants
\(A_1\geq0\) and \(\gamma_1>0\) such that, upon setting
\[
\widetilde h(x,\tau)
:=
\theta(x,\tau)-S(x)+A_1e^{-\tau}S(x),
\]
we have
\[
\|\widetilde h(\cdot,\tau)\|_{L^2(\Omega;S^{p-1}\,dx)}
\leq Ce^{-(1+\gamma_1)\tau}.
\]
Since \(S\) is the first eigenfunction, \(\widetilde h\) satisfies
\begin{equation}\label{eq: the equation of widetilde h}
\begin{cases}
S^{p-1}\widetilde h_\tau
=
-\mathscr L_S\widetilde h+N(h)
&\text{in }\Omega\times(1,+\infty),\\
\widetilde h=0
&\text{on }\partial\Omega\times(1,+\infty).
\end{cases}
\end{equation}
Applying Theorem~\ref{thm:cut off estimate} to
\eqref{eq: the equation of widetilde h} gives
\[
\|\widetilde h(\cdot,\tau)\|_{C^{1,p+1}(\overline\Omega)}
\leq Ce^{-(1+\gamma_1)\tau}.
\]
Changing back to \(t=e^\tau\) proves \eqref{eq:expansion of v in long time}, then  \eqref{eq:long time convergence of u} follows.
\end{proof}

\begin{rem}
In the same way as  in \cite{jin2024optimal}*{Remark 5.1}, we can also expand the solution up to any order.
\end{rem}

\small


\begin{bibdiv}
\begin{biblist}

\bib{akagi2016stability}{article}{
   author={Akagi, Goro},
   title={Stability of non-isolated asymptotic profiles for fast diffusion},
   journal={Comm. Math. Phys.},
   volume={345},
   date={2016},
   number={1},
   pages={77--100},
   issn={0010-3616},
   review={\MR{3509010}},
}

\bib{akagi2023rates}{article}{
   author={Akagi, Goro},
   title={Rates of convergence to non-degenerate asymptotic profiles for
   fast diffusion via energy methods},
   journal={Arch. Ration. Mech. Anal.},
   volume={247},
   date={2023},
   number={2},
   pages={Paper No. 23, 38},
   issn={0003-9527},
   review={\MR{4565026}},
}

\bib{aronson1979regularite}{article}{
   author={Aronson, Donald G.},
   author={B\'enilan, Philippe},
   title={R\'egularit\'e{} des solutions de l'\'equation des milieux poreux
   dans ${\bf R}\sp{N}$},
   language={French, with English summary},
   journal={C. R. Acad. Sci. Paris S\'er. A-B},
   volume={288},
   date={1979},
   number={2},
   pages={A103--A105},
   issn={0151-0509},
   review={\MR{0524760}},
}

\bib{aronson1981large}{article}{
   author={Aronson, D.~G.},
   author={Peletier, L.~A.},
   title={Large time behaviour of solutions of the porous medium equation
   in bounded domains},
   date={1981},
   ISSN={0022-0396,1090-2732},
   journal={J. Differential Equations},
   volume={39},
   number={3},
   pages={378\ndash 412},
   url={https://doi.org/10.1016/0022-0396(81)90065-6},
   review={\MR{612594}},
}



\bib{bekmaganbetov2025singulardegenerate}{article}{
   author={Bekmaganbetov, Bekarys},
   author={Dong, Hongjie},
   title={Singular-degenerate parabolic systems with the conormal boundary
   condition on the upper half space},
   date={2025},
   journal={arXiv:2509.18418},
   url={https://arxiv.org/abs/2509.18418},
}

\bib{benilan1981regularizing}{article}{
   author={B\'enilan, Philippe},
   author={Crandall, Michael G.},
   title={Regularizing effects of homogeneous evolution equations},
   conference={
      title={Contributions to analysis and geometry},
      address={Baltimore, Md.},
      date={1980},
   },
   book={publisher={Johns Hopkins Univ. Press, Baltimore, MD},},
   isbn={0-8018-2779-5},
   date={1981},
   pages={23--39},
   review={\MR{0648452}},
}

\bib{berryman1980stability}{article}{
   author={Berryman, James~G.},
   author={Holland, Charles~J.},
   title={Stability of the separable solution for fast diffusion},
   date={1980},
   ISSN={0003-9527},
   journal={Arch. Rational Mech. Anal.},
   volume={74},
   number={4},
   pages={379\ndash 388},
   url={https://doi.org/10.1007/BF00249681},
   review={\MR{588035}},
}

\bib{bertsch1992nonuniqueness}{article}{
   author={Bertsch, Michiel},
   author={Dal Passo, Roberta},
   author={Ughi, Maura},
   title={Nonuniqueness of solutions of a degenerate parabolic equation},
   journal={Ann. Mat. Pura Appl. (4)},
   volume={161},
   date={1992},
   pages={57--81},
   issn={0003-4622},
   review={\MR{1174811}},
}

\bib{bonforte2006global}{article}{
   author={Bonforte, Matteo},
   author={Vazquez, Juan Luis},
   title={Global positivity estimates and Harnack inequalities for the fast
   diffusion equation},
   journal={J. Funct. Anal.},
   volume={240},
   date={2006},
   number={2},
   pages={399--428},
   issn={0022-1236},
   review={\MR{2261689}},
}

\bib{bonforte2010positivity}{article}{
   author={Bonforte, Matteo},
   author={V\'azquez, Juan Luis},
   title={Positivity, local smoothing, and Harnack inequalities for very
   fast diffusion equations},
   journal={Adv. Math.},
   volume={223},
   date={2010},
   number={2},
   pages={529--578},
   issn={0001-8708},
   review={\MR{2565541}},
}

\bib{bonforte2012behaviour}{article}{
   author={Bonforte, Matteo},
   author={Grillo, Gabriele},
   author={Vazquez, Juan Luis},
   title={Behaviour near extinction for the Fast Diffusion Equation on
   bounded domains},
   language={English, with English and French summaries},
   journal={J. Math. Pures Appl. (9)},
   volume={97},
   date={2012},
   number={1},
   pages={1--38},
   issn={0021-7824},
   review={\MR{2863762}},
}

\bib{bonforte2021sharp}{article}{
   author={Bonforte, Matteo},
   author={Figalli, Alessio},
   title={Sharp extinction rates for fast diffusion equations on generic
   bounded domains},
   journal={Comm. Pure Appl. Math.},
   volume={74},
   date={2021},
   number={4},
   pages={744--789},
   issn={0010-3640},
   review={\MR{4221933}},
}

\bib{bonforte2024cauchy}{article}{
   author={Bonforte, Matteo},
   author={Figalli, Alessio},
   title={The Cauchy-Dirichlet problem for the fast diffusion equation on
   bounded domains},
   journal={Nonlinear Anal.},
   volume={239},
   date={2024},
   pages={Paper No. 113394, 55},
   issn={0362-546X},
   review={\MR{4658530}},
}

\bib{brezis1983nonlinear}{article}{
   author={Br\'ezis, Ha\"im},
   author={Friedman, Avner},
   title={Nonlinear parabolic equations involving measures as initial
   conditions},
   journal={J. Math. Pures Appl. (9)},
   volume={62},
   date={1983},
   number={1},
   pages={73--97},
   issn={0021-7824},
   review={\MR{0700049}},
}

\bib{caffarelli1980regularity}{article}{
   author={Caffarelli, Luis A.},
   author={Friedman, Avner},
   title={Regularity of the free boundary of a gas flow in an
   $n$-dimensional porous medium},
   journal={Indiana Univ. Math. J.},
   volume={29},
   date={1980},
   number={3},
   pages={361--391},
   issn={0022-2518},
   review={\MR{0570687}},
}

\bib{caffarelli1987lipschitz}{article}{
   author={Caffarelli, L. A.},
   author={V\'azquez, J. L.},
   author={Wolanski, N. I.},
   title={Lipschitz continuity of solutions and interfaces of the
   $N$-dimensional porous medium equation},
   journal={Indiana Univ. Math. J.},
   volume={36},
   date={1987},
   number={2},
   pages={373--401},
   issn={0022-2518},
   review={\MR{0891781}},
}

\bib{caffarelli1990regularity}{article}{
   author={Caffarelli, Luis A.},
   author={Wolanski, Noem\'i\ I.},
   title={$C^{1,\alpha}$ regularity of the free boundary for the
   $N$-dimensional porous media equation},
   journal={Comm. Pure Appl. Math.},
   volume={43},
   date={1990},
   number={7},
   pages={885--902},
   issn={0010-3640},
   review={\MR{1072396}},
}

\bib{chasseigne2002theory}{article}{
   author={Chasseigne, Emmanuel},
   author={Vazquez, Juan Luis},
   title={Theory of extended solutions for fast-diffusion equations in
   optimal classes of data. Radiation from singularities},
   journal={Arch. Ration. Mech. Anal.},
   volume={164},
   date={2002},
   number={2},
   pages={133--187},
   issn={0003-9527},
   review={\MR{1929929}},
   doi={10.1007/s00205-002-0210-0},
}


\bib{chasseigne2003pressure}{article}{
   author={Chasseigne, Emmanuel},
   author={V\'azquez, Juan Luis},
   title={The pressure equation in the fast diffusion range},
   journal={Rev. Mat. Iberoamericana},
   volume={19},
   date={2003},
   number={3},
   pages={873--917},
   issn={0213-2230},
   review={\MR{2053567}},
}

\bib{choi2023asymptotics}{article}{
   author={Choi, Beomjun},
   author={McCann, Robert J.},
   author={Seis, Christian},
   title={Asymptotics near extinction for nonlinear fast diffusion on a
   bounded domain},
   journal={Arch. Ration. Mech. Anal.},
   volume={247},
   date={2023},
   number={2},
   pages={Paper No. 16, 48},
   issn={0003-9527},
   review={\MR{4553941}},
}



\bib{choi2024finitedimensional}{article}{
   author={Choi, Beomjun},
   author={Seis, Christian},
   title={Finite-dimensional leading order dynamics for the fast diffusion
   equation near extinction},
   journal={Discrete Contin. Dyn. Syst.},
   volume={44},
   date={2024},
   number={9},
   pages={2697--2712},
   issn={1078-0947},
   review={\MR{4762615}},
}

\bib{dahlberg1988nonnegative}{article}{
   author={Dahlberg, Bj\"orn E. J.},
   author={Kenig, Carlos E.},
   title={Nonnegative solutions of the initial-Dirichlet problem for
   generalized porous medium equations in cylinders},
   journal={J. Amer. Math. Soc.},
   volume={1},
   date={1988},
   number={2},
   pages={401--412},
   issn={0894-0347},
   review={\MR{0928264}},
   doi={10.2307/1990922},
}

\bib{daskalopoulos1997nonlinear}{article}{
   author={Daskalopoulos, Panagiota},
   author={Del Pino, Manuel},
   title={On nonlinear parabolic equations of very fast diffusion},
   journal={Arch. Rational Mech. Anal.},
   volume={137},
   date={1997},
   number={4},
   pages={363--380},
   issn={0003-9527},
   review={\MR{1463800}},
}

\bib{daskalopoulos1998regularity}{article}{
   author={Daskalopoulos, P.},
   author={Hamilton, R.},
   title={Regularity of the free boundary for the porous medium equation},
   journal={J. Amer. Math. Soc.},
   volume={11},
   date={1998},
   number={4},
   pages={899--965},
   issn={0894-0347},
   review={\MR{1623198}},
}

\bib{daskalopoulos2001all}{article}{
   author={Daskalopoulos, P.},
   author={Hamilton, R.},
   author={Lee, K.},
   title={All time $C^\infty$-regularity of the interface in degenerate
   diffusion: a geometric approach},
   journal={Duke Math. J.},
   volume={108},
   date={2001},
   number={2},
   pages={295--327},
   issn={0012-7094},
   review={\MR{1833393}},
}

\bib{daskalopoulos2007degenerate}{book}{
   author={Daskalopoulos, Panagiota},
   author={Kenig, Carlos E.},
   title={Degenerate diffusions},
   series={EMS Tracts in Mathematics},
   volume={1},
   note={Initial value problems and local regularity theory},
   publisher={European Mathematical Society (EMS), Z\"urich},
   date={2007},
   pages={x+198},
   isbn={978-3-03719-033-3},
   review={\MR{2338118}},
   doi={10.4171/033},
}

\bib{dibenedetto1991local}{article}{
   author={DiBenedetto, E.},
   author={Kwong, Y.},
   author={Vespri, V.},
   title={Local space-analyticity of solutions of certain singular
   parabolic equations},
   date={1991},
   ISSN={0022-2518,1943-5258},
   journal={Indiana Univ. Math. J.},
   volume={40},
   number={2},
   pages={741\ndash 765},
   url={https://doi.org/10.1512/iumj.1991.40.40033},
   review={\MR{1119195}},
}

\bib{dibenedetto1993degenerate}{book}{
   author={DiBenedetto, Emmanuele},
   title={Degenerate parabolic equations},
   series={Universitext},
   publisher={Springer-Verlag, New York},
   date={1993},
   pages={xvi+387},
   isbn={0-387-94020-0},
   review={\MR{1230384}},
}

\bib{dong2021parabolic}{article}{
   author={Dong, Hongjie},
   author={Phan, Tuoc},
   title={Parabolic and elliptic equations with singular or degenerate
   coefficients: the {D}irichlet problem},
   date={2021},
   ISSN={0002-9947,1088-6850},
   journal={Trans. Amer. Math. Soc.},
   volume={374},
   number={9},
   pages={6611\ndash 6647},
   url={https://doi.org/10.1090/tran/8397},
   review={\MR{4302171}},
}

\bib{dong2023degenerate}{article}{
   author={Dong, Hongjie},
   author={Phan, Tuoc},
   author={Tran, Hung~Vinh},
   title={Degenerate linear parabolic equations in divergence form on the
   upper half space},
   date={2023},
   ISSN={0002-9947,1088-6850},
   journal={Trans. Amer. Math. Soc.},
   volume={376},
   number={6},
   pages={4421\ndash 4451},
   url={https://doi.org/10.1090/tran/8892},
   review={\MR{4586816}},
}

\bib{dong2023parabolic}{article}{
   author={Dong, Hongjie},
   author={Phan, Tuoc},
   title={On parabolic and elliptic equations with singular or degenerate
   coefficients},
   date={2023},
   ISSN={0022-2518,1943-5258},
   journal={Indiana Univ. Math. J.},
   volume={72},
   number={4},
   pages={1461\ndash 1502},
   review={\MR{4637368}},
}

\bib{dong2024nondivergence}{article}{
   author={Dong, Hongjie},
   author={Phan, Tuoc},
   author={Tran, Hung~Vinh},
   title={Nondivergence form degenerate linear parabolic equations on the
   upper half space},
   date={2024},
   ISSN={0022-1236,1096-0783},
   journal={J. Funct. Anal.},
   volume={286},
   number={9},
   pages={Paper No. 110374, 53pp},
   url={https://doi.org/10.1016/j.jfa.2024.110374},
   review={\MR{4708668}},
}

\bib{dong2025schaudertypeestimatesdegenerate}{article}{
   author={Dong, Hongjie},
   author={Jeon, Seongmin},
   title={Schauder type estimates for degenerate or singular parabolic
   systems with partially dmo coefficients},
   journal={Rev. Mat. Iberoam. (2025), published online first},
   url={https://arxiv.org/abs/2502.08926},
}

\bib{dong2025nondivergence}{article}{
   author={Dong, Hongjie},
   author={Ryu, Junhee},
   title={On nondivergence form linear parabolic and elliptic equations with
   degenerate coefficients},
   journal={J. Funct. Anal.},
   volume={291},
   date={2026},
   number={3},
   pages={Paper No. 111511, 39pp},
   issn={0022-1236},
   review={\MR{5062413}},
}

\bib{dong2026degenerate}{article}{
   author={Dong, Hongjie},
   author={Jeon, Seongmin},
   title={Degenerate or singular parabolic systems with partially {DMO}
   coefficients: the {D}irichlet problem},
   date={2026},
   ISSN={0022-1236,1096-0783},
   journal={J. Funct. Anal.},
   volume={290},
   number={9},
   pages={Paper No. 111365, 48pp},
   url={https://doi.org/10.1016/j.jfa.2026.111365},
   review={\MR{5023999}},
}

\bib{dong2026sobolev}{article}{
   author={Dong, Hongjie},
   author={Ryu, Junhee},
   title={Sobolev estimates for parabolic and elliptic equations in
   divergence form with degenerate coefficients},
   date={2026},
   ISSN={0002-9947,1088-6850},
   journal={Trans. Amer. Math. Soc.},
   volume={379},
   number={2},
   pages={1283\ndash 1326},
   url={https://doi.org/10.1090/tran/9527},
   review={\MR{5019582}},
}

\bib{feireisl2000convergence}{article}{
   author={Feireisl, Eduard},
   author={Simondon, Fr\'ed\'erique},
   title={Convergence for semilinear degenerate parabolic equations in
   several space dimensions},
   journal={J. Dynam. Differential Equations},
   volume={12},
   date={2000},
   number={3},
   pages={647--673},
   issn={1040-7294},
   review={\MR{1800136}},
}

\bib{galaktionov2002fast}{article}{
   author={Galaktionov, Victor A.},
   author={King, John R.},
   title={Fast diffusion equation with critical Sobolev exponent in a ball},
   journal={Nonlinearity},
   volume={15},
   date={2002},
   number={1},
   pages={173--188},
   issn={0951-7715},
   review={\MR{1877973}},
}

\bib{giaquinta1982regularity}{article}{
   author={Giaquinta, Mariano},
   author={Giusti, Enrico},
   title={On the regularity of the minima of variational integrals},
   journal={Acta Math.},
   volume={148},
   date={1982},
   pages={31--46},
   issn={0001-5962},
   review={\MR{0666107}},
}

\bib{gilbargtrudinger2001}{book}{
    author={Gilbarg, David},
    author={Trudinger, Neil S.},
    title={Elliptic partial differential equations of second order},
    edition={2},
    series={Classics in Mathematics},
    publisher={Springer-Verlag},
    place={Berlin},
    date={2001},
    note={Reprint of the 1998 edition},
    doi={10.1007/978-3-642-61798-0},
}

\bib{gruterwidman1982}{article}{
    author={Gr{\"u}ter, Michael},
    author={Widman, Kjell-Ove},
    title={The Green function for uniformly elliptic equations},
    journal={Manuscripta Math.},
    volume={37},
    date={1982},
    number={3},
    pages={303--342},
    doi={10.1007/BF01166225},
}

\bib{gui1993regularity}{article}{
   author={Gui, Changfeng},
   author={Lin, Fang-Hua},
   title={Regularity of an elliptic problem with a singular nonlinearity},
   journal={Proc. Roy. Soc. Edinburgh Sect. A},
   volume={123},
   date={1993},
   number={6},
   pages={1021--1029},
   issn={0308-2105},
   review={\MR{1263903}},
}

\bib{herrero1985cauchy}{article}{
   author={Herrero, Miguel A.},
   author={Pierre, Michel},
   title={The Cauchy problem for $u_t=\Delta u^m$ when $0<m<1$},
   journal={Trans. Amer. Math. Soc.},
   volume={291},
   date={1985},
   number={1},
   pages={145--158},
   issn={0002-9947},
   review={\MR{0797051}},
   doi={10.2307/1999900},
}

\bib{hissinkmuller2022wellposedness}{article}{
   author={Hissink Muller, V.},
   author={Sonner, S.},
   title={Well-posedness of singular-degenerate porous medium type
   equations and application to biofilm models},
   date={2022},
   ISSN={0022-247X,1096-0813},
   journal={J. Math. Anal. Appl.},
   volume={509},
   number={1},
   pages={Paper No. 125894, 34 pp.},
   url={https://doi.org/10.1016/j.jmaa.2021.125894},
   review={\MR{4354866}},
}

\bib{jin2023bubbling}{article}{
   author={Jin, Tianling},
   author={Xiong, Jingang},
   title={Bubbling and extinction for some fast diffusion equations in
   bounded domains},
   journal={Trans. Amer. Math. Soc. Ser. B},
   volume={10},
   date={2023},
   pages={1287--1332},
   review={\MR{4642416}},
}

\bib{jin2023optimal}{article}{
   author={Jin, Tianling},
   author={Xiong, Jingang},
   title={Optimal boundary regularity for fast diffusion equations in
   bounded domains},
   journal={Amer. J. Math.},
   volume={145},
   date={2023},
   number={1},
   pages={151--219},
   issn={0002-9327},
   review={\MR{4545845}},
}

\bib{jin2024optimal}{article}{
   author={Jin, Tianling},
   author={Ros-Oton, Xavier},
   author={Xiong, Jingang},
   title={Optimal regularity and fine asymptotics for the porous medium
   equation in bounded domains},
   journal={J. Reine Angew. Math.},
   volume={809},
   date={2024},
   pages={269--300},
   issn={0075-4102},
   review={\MR{4726572}},
}

\bib{jin2025regularity}{article}{
   author={Jin, Tianling},
   author={Xiong, Jingang},
   title={Regularity of solutions to the Dirichlet problem for fast
   diffusion equations},
   journal={Adv. Math.},
   volume={478},
   date={2025},
   pages={Paper No. 110390, 32},
   issn={0001-8708},
   review={\MR{4915011}},
}

\bib{jin2026bubbledynamics}{article}{
   author={Jin, Tianling},
   author={Xiong, Jingang},
   title={Extinction profiles for the Sobolev critical fast diffusion
   equation in bounded domains. I. One bubble dynamics},
   journal={Arch. Ration. Mech. Anal.},
   volume={250},
   date={2026},
   number={2},
   pages={Paper No. 14, 43},
   issn={0003-9527},
   review={\MR{5043140}},
}

\bib{JTXZ}{article}{
   author={Jin, Tianling},
   author={Tu, Xushan},
   author={Xiong, Jingang},
   author={Zheng, Zhen},
   title={Schauder estimates for the linearized very fast diffusion
   equations in bounded domains},
   journal={preprint},
   date={2026},
}

\bib{kienzler2018flatness}{article}{
   author={Kienzler, Clemens},
   author={Koch, Herbert},
   author={V\'azquez, Juan Luis},
   title={Flatness implies smoothness for solutions of the porous medium
   equation},
   journal={Calc. Var. Partial Differential Equations},
   volume={57},
   date={2018},
   number={1},
   pages={Paper No. 18, 42},
   issn={0944-2669},
   review={\MR{3740398}},
}

\bib{kim2009smooth}{article}{
   author={Kim, Sunghoon},
   author={Lee, Ki-Ahm},
   title={Smooth solution for the porous medium equation in a bounded
   domain},
   journal={J. Differential Equations},
   volume={247},
   date={2009},
   number={4},
   pages={1064--1095},
   issn={0022-0396},
   review={\MR{2531171}},
}

\bib{koch1999noneuclidean}{thesis}{
   author={Koch, Herbert},
   title={Non-Euclidean singular integrals and the porous medium equation},
   type={Habilitation},
   organization={Universit\"at Heidelberg},
   place={Heidelberg},
   date={1999},
}

\bib{ladyzenskaja1967linear}{book}{
   author={Lady\v zenskaja, O. A.},
   author={Solonnikov, V. A.},
   author={Ural\cprime ceva, N. N.},
   title={Linear and quasilinear equations of parabolic type},
   language={Russian},
   series={Translations of Mathematical Monographs},
   volume={Vol. 23},
   note={Translated from the Russian by S. Smith},
   publisher={American Mathematical Society, Providence, RI},
   date={1968},
   pages={xi+648},
   review={\MR{0241822}},
}

\bib{lee2025boundary}{article}{
   author={Lee, Ki-Ahm},
   author={Yun, Hyungsung},
   title={Boundary regularity for viscosity solutions of fully nonlinear
   degenerate/singular parabolic equations},
   journal={Calc. Var. Partial Differential Equations},
   volume={64},
   date={2025},
   number={1},
   pages={Paper No. 25, 32},
   issn={0944-2669},
   review={\MR{4836081}},
}

\bib{oleinik1958cauchy}{article}{
   author={Ole\u inik, O. A.},
   author={Kala\v sinkov, A. S.},
   author={\v C\v zou, Yu\u i-Lin\cprime},
   title={The Cauchy problem and boundary problems for equations of the type
   of non-stationary filtration},
   language={Russian},
   journal={Izv. Akad. Nauk SSSR Ser. Mat.},
   volume={22},
   date={1958},
   pages={667--704},
   issn={0373-2436},
   review={\MR{0099834}},
}

\bib{otto1996contraction}{article}{
   author={Otto, F.},
   title={$L^1$-contraction and uniqueness for quasilinear
   elliptic-parabolic equations},
   date={1996},
   ISSN={0022-0396,1090-2732},
   journal={J. Differential Equations},
   volume={131},
   number={1},
   pages={20\ndash 38},
   url={https://doi.org/10.1006/jdeq.1996.0155},
   review={\MR{1415045}},
}

\bib{pierre1987nonlinear}{article}{
   author={Pierre, M.},
   title={Nonlinear fast diffusion with measures as data},
   conference={
      title={Nonlinear parabolic equations: qualitative properties of
      solutions},
      address={Rome},
      date={1985},
   },
   book={
      series={Pitman Res. Notes Math. Ser.},
      volume={149},
      publisher={Longman Sci. Tech., Harlow},
   },
   isbn={0-582-99459-4},
   date={1987},
   pages={179--188},
   review={\MR{0901108}},
}


\bib{sabinina1962nonlinear}{article}{
   author={Sabinina, E. S.},
   title={On a class of non-linear degenerate parabolic equations},
   journal={Dokl. Akad. Nauk SSSR},
   volume={143:4},
   date={1962},
   pages={794-797},
}

\bib{sabinina1965quasilinear}{article}{
   author={Sabinina, E. S.},
   title={On a class of quasilinear parabolic equations, not solvable for
   the time derivative},
   journal={Sibirsk. Mat. Zh.},
   volume={6:5},
   date={1965},
   pages={1074-1100},
}

\bib{sire2022extinction}{article}{
   author={Sire, Yannick},
   author={Wei, Juncheng},
   author={Zheng, Youquan},
   title={Extinction behavior for the fast diffusion equations with critical
   exponent and Dirichlet boundary conditions},
   journal={J. Lond. Math. Soc. (2)},
   volume={106},
   date={2022},
   number={2},
   pages={855--898},
   issn={0024-6107},
   review={\MR{4477206}},
}

\bib{vazquez1992nonexistence}{article}{
   author={V\'azquez, Juan Luis},
   title={Nonexistence of solutions for nonlinear heat equations of
   fast-diffusion type},
   journal={J. Math. Pures Appl. (9)},
   volume={71},
   date={1992},
   number={6},
   pages={503--526},
   issn={0021-7824},
   review={\MR{1193606}},
}

\bib{vazquez2003darcy}{article}{
   author={V\'azquez, Juan L.},
   title={Darcy's law and the theory of shrinking solutions of fast
   diffusion equations},
   journal={SIAM J. Math. Anal.},
   volume={35},
   date={2003},
   number={4},
   pages={1005--1028},
   issn={0036-1410},
   review={\MR{2049031}},
}

\bib{vazquez2004dirichlet}{article}{
   author={Vazquez, Juan Luis},
   title={The Dirichlet problem for the porous medium equation in bounded
   domains. Asymptotic behavior},
   journal={Monatsh. Math.},
   volume={142},
   date={2004},
   number={1-2},
   pages={81--111},
   issn={0026-9255},
   review={\MR{2065023}},
}

\bib{vazquez2006smoothing}{book}{
   author={V\'azquez, Juan Luis},
   title={Smoothing and decay estimates for nonlinear diffusion equations},
   series={Oxford Lecture Series in Mathematics and its Applications},
   volume={33},
   note={Equations of porous medium type},
   publisher={Oxford University Press, Oxford},
   date={2006},
   pages={xiv+234},
   isbn={978-0-19-920297-3},
   isbn={0-19-920297-4},
   review={\MR{2282669}},
   doi={10.1093/acprof:oso/9780199202973.001.0001},
}

\bib{vazquez2007porous}{book}{
   author={V\'azquez, Juan Luis},
   title={The porous medium equation},
   series={Oxford Mathematical Monographs},
   note={Mathematical theory},
   publisher={The Clarendon Press, Oxford University Press, Oxford},
   date={2007},
   pages={xxii+624},
   isbn={978-0-19-856903-9},
   isbn={0-19-856903-3},
   review={\MR{2286292}},
}

\bib{yun2024regularity}{article}{
   author={Yun, Hyungsung},
   title={Regularity theory for fully nonlinear equations of porous
   medium-type},
   date={2024},
   journal={arXiv:2407.20022},
   url={https://arxiv.org/abs/2407.20022},
}











\end{biblist}
\end{bibdiv}


\bigskip
\bigskip

\noindent T. Jin

\noindent Department of Mathematics, The Hong Kong University of Science and Technology\\
Clear Water Bay, Kowloon, Hong Kong\\
Email: \textsf{tianlingjin@ust.hk}

\medskip

\noindent X. Tu

\noindent Department of Mathematics, The Hong Kong University of Science and Technology\\
Clear Water Bay, Kowloon, Hong Kong\\[1mm]
Email:  \textsf{maxstu@ust.hk}, \textsf{tuxushan28561m@gmail.com}

\medskip

\noindent J. Xiong

\noindent School of Mathematical Sciences, Laboratory of Mathematics and Complex Systems, MOE\\ Beijing Normal University, 
Beijing 100875, China\\
Email: \textsf{jx@bnu.edu.cn}

\medskip

\noindent Z. Zheng

\noindent Department of Mathematics, The Hong Kong University of Science and Technology\\
Clear Water Bay, Kowloon, Hong Kong\\[1mm]
Email:  \textsf{zzhengax@connect.ust.hk}

\end{document}